\newif\ifsubsections
	\definecolor{linkred}{rgb}{0.7,0.2,0.2}
	\definecolor{linkblue}{rgb}{0,0.2,0.6}
	\definecolor{linkred}{rgb}{0.0,0.0,0.0}
	\definecolor{linkblue}{rgb}{0,0.0,0.0}
\theoremstyle{plain}
\newtheorem{mythm}[equation]{Theorem}
\newtheorem{myprop}[equation]{Proposition}
\newtheorem{mylem}[equation]{Lemma}
\newtheorem{mycor}[equation]{Corollary}
\theoremstyle{definition}
\newtheorem{mydef}[equation]{Definition}
\newcommand\gr{\mathrm{gr}\, }
\newcommand\Lip{\mathrm{Lip}\,}
\newcommand\Vol{\mathrm{Vol}}
\newcommand\bB{\mathbf{B}}
\newcommand\bE{\mathbf{E}}
\newcommand\dist{\mathrm{dist}\,}
\newcommand\bp{{\mathbf{p}}}
\newcommand\bC{{\mathbf{C}}}
\begin{document}

\title{Almgren's center manifold in a simple setting }

\author{Camillo De Lellis}
\address{Institute for Advanced Study, 1 Einstein Drive, Princeton, NJ 08540, USA}
\address{Institut f\"ur Mathematik, Universit\"at Z\"urich, CH-8057 Z\"urich, Switzerland}
\email{camillo.delellis@math.uzh.ch}

\subjclass[2010]{Primary: 49Q15, 49Q05; Secondary: 35D10}
\keywords{Regularity theory, area minimizing, center manifold, excess decay}

\begin{abstract}
We aim at explaining the most basic ideas underlying two fundamental results in the regularity theory of area minimizing oriented surfaces: De Giorgi's celebrated $\varepsilon$-regularity theorem and Almgren's center manifold. Both theorems will be proved in a very simplified situation, which however allows to illustrate some of the most important PDE estimates. 
\end{abstract}

\maketitle
\thispagestyle{empty}

\section{Introduction}


In these lecture notes I will try to give the core ideas of two fundamental regularity results in geometric measure theory. The
subject is rather technical and complicated and it would require at least one monographic semester course of prerequisites before one could even start with the statements. Nonetheless the core of the arguments have a simple analytic (in modern terms ``PDE'') nature. These notes are an attempt of conveying them without requiring any knowledge of geometric measure theory. 

\subsection{Area minimizing graphs} 
Throughout these notes we will thus fix our attention on graphs of Lipschitz maps $u : \Omega \subset \mathbb R^m \to \mathbb R^n$ 
with Lipschitz constant $\Lip (u)$. In particular given any Borel set $F\subset \Omega$ we will denote by $\gr (u, F)$ the set
\[
\gr (u, F) := \{(x,y)\in F\times \mathbb R^n : y = u(x)\}\, .
\]
We will sometimes omit $F$ if it does not play an important role in our discussion.

We will often use the simple observation that, if we rotate our system of coordinates by a small angle $\theta$, $\gr (u, \Omega)$ is still
the graph of a Lipschitz function over some domain $\Omega'$ in the new coordinates. More precisely we have the following simple
lemma.

\begin{mylem}\label{l:first_rotation}
There are (dimensional) constants $c_0>0$ and $C>0$ with the following property. Assume $\Omega$ and $u$ are as above
with $\Lip (u) \leq 2$
and let $A\in SO (m+n)$ with $|A - {\rm Id}| \leq c_0$\footnote{From now on $|B|$ will denote the Hilbert-Schmidt norm of the matrix $B$.} (where ${\rm Id}$ is the identity map). If we define the coordinate transformation $(x',y') = A (x,y)$, 
then there is $\Omega'\subset \mathbb R^m$ and a Lipschitz $u': \Omega'\to \mathbb R^n$ such that
\[
\{(x',y'): A^{-1} (x',y')\in \gr (u, \Omega)\} = \gr (u', \Omega')
\] 
and $\Lip (u') \leq \Lip (u) + C |A- {\rm Id}|$. 
\end{mylem}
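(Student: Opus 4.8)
The plan is to decompose $A$ into blocks adapted to the splitting $\mathbb R^{m+n} = \mathbb R^m \times \mathbb R^n$ and then read off the new function explicitly. Write $A(x,y) = (A_{11}x + A_{12}y,\, A_{21}x + A_{22}y)$ with $A_{11}\colon\mathbb R^m\to\mathbb R^m$, $A_{12}\colon\mathbb R^n\to\mathbb R^m$, $A_{21}\colon\mathbb R^m\to\mathbb R^n$, $A_{22}\colon\mathbb R^n\to\mathbb R^n$. Since the Hilbert--Schmidt norm of a submatrix is bounded by that of the whole matrix, and the off-diagonal blocks of $\mathrm{Id}$ vanish, one has $|A_{11}-\mathrm{Id}|,\ |A_{22}-\mathrm{Id}|,\ |A_{12}|,\ |A_{21}|\le |A-\mathrm{Id}|$; I will write $\varepsilon := |A-\mathrm{Id}|$ and use repeatedly that $|Bv|\le |B|\,|v|$ (operator norm $\le$ Hilbert--Schmidt norm).

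Next I would introduce $\Phi\colon\Omega\to\mathbb R^m$, $\Phi(x):=A_{11}x+A_{12}u(x)$, i.e.\ the $\mathbb R^m$-component of $A(x,u(x))$. Using $\Lip(u)\le 2$ and the bounds above,
\[
|\Phi(x)-\Phi(z)| \ge |x-z| - \varepsilon|x-z| - \varepsilon\,\Lip(u)\,|x-z| \ge (1-3\varepsilon)|x-z|\, ,
\]
so if $c_0$ is chosen with $3c_0<1$ (say $c_0=\tfrac16$), then $\Phi$ is injective and is a bi-Lipschitz homeomorphism onto $\Omega':=\Phi(\Omega)$. I then define $u'\colon\Omega'\to\mathbb R^n$ by $u'(x'):=A_{21}\,\Phi^{-1}(x')+A_{22}\,u(\Phi^{-1}(x'))$. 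By construction the point $(x,u(x))$ is sent by $A$ to $(\Phi(x),\,u'(\Phi(x)))$, hence $A(\gr(u,\Omega))=\gr(u',\Omega')$, which is exactly the claimed identity once rewritten with $A^{-1}$.

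Finally I would estimate $\Lip(u')$. For $x'=\Phi(x)$ and $z'=\Phi(z)$,
\[
|u'(x')-u'(z')| \le |A_{21}|\,|x-z| + |A_{22}|\,|u(x)-u(z)| \le \bigl(\Lip(u)+\varepsilon(1+\Lip(u))\bigr)|x-z|\, ,
\]
using $|A_{22}|\le 1+\varepsilon$ and $|u(x)-u(z)|\le\Lip(u)|x-z|$; combining with $|x-z|\le(1-3\varepsilon)^{-1}|x'-z'|$ and simplifying,
\[
\Lip(u') \le \frac{\Lip(u)+\varepsilon(1+\Lip(u))}{1-3\varepsilon} = \Lip(u) + \frac{\varepsilon\,(1+4\Lip(u))}{1-3\varepsilon} \le \Lip(u) + C\,\varepsilon
\]
with $C$ dimensional, using $\Lip(u)\le 2$ and $\varepsilon\le c_0$ (e.g.\ $c_0=\tfrac16$, $C=18$).

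I do not expect a genuine obstacle here; the only point requiring a little care is the last display, where one must peel off the $\mathrm{Id}$ part of $A_{22}$ (and of $A_{11}$) before dividing by the lower Lipschitz bound $1-3\varepsilon$ of $\Phi$ — a cruder estimate would produce a constant strictly larger than $1$ in front of $\Lip(u)$, which the statement does not allow.
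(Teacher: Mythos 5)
Your proof is correct, and the paper itself leaves Lemma \ref{l:first_rotation} unproved; your argument (project the rotated graph map onto the new horizontal coordinates, invert the resulting bi-Lipschitz map $\Phi$, and read off $u'$) is exactly the technique the paper uses later in its proof of the analogous Lemma \ref{l:rotation_complex}, where $\Phi$ appears as $I(x)=P(x,f(x))$ and $u'$ as $F\circ I^{-1}$. Your careful peeling-off of the identity part before dividing by $1-3\varepsilon$, needed to keep the coefficient of $\Lip(u)$ equal to $1$, is the one nontrivial point and you handle it correctly.
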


As it is customary, for $F$ Borel, we will let $\Vol^m (\gr (u, F))$ be the $m$-dimensional Hausdorff measure of $\gr (u,F)$, for which the area formula gives the
following identity
\begin{equation}\label{e:area_formula}
\Vol^m (\gr (u, F)) = \int_F \Big(1+|Du|^2 + \sum_{|\alpha|\geq 2} (\det M_{\alpha\beta} (Du))^2\Big)^{\frac{1}{2}}\, ,
\end{equation}
Here we use the notation
$M_{\alpha\beta} (D h)$ for the $k\times k$ minor of $Dh$ corresponding to the choice of the $\alpha_1, \ldots, \alpha_k$ lines and
$\beta_1, \ldots, \beta_k$ rows and we set $|\alpha|:=k$. As it is obvious from the invariance of the Hausdorff
measure under rotations, $\Vol^m (\gr (u, \Omega)) = \Vol^m (\gr (u', \Omega'))$ when $\Omega, \Omega'$, $u$ and $u'$ are as in Lemma \ref{l:first_rotation} and $\Omega$ is Borel (it is elementary to see that then $\Omega'$ is Borel as well). 

In the rest of the paper we will investigate maps $u$ whose graphs are area minimizing in the following sense:

\begin{mydef}\label{d:AM}
Let $u$ and $c_0$ be as in Lemma \ref{l:first_rotation}. We say that $\gr (u, \Omega)$ is area minimizing when the following holds for every $A\in SO (m+n)$ with $|A - {\rm Id}|\leq c_0$:
\begin{itemize} 
\item[(AM)] If $u'$ is the map of Lemma \ref{l:first_rotation} and $v: \Omega' \to \mathbb R^m$ any other Lipschitz map with $\{v\neq u'\} \subset \subset \Omega'$, then
\[
\Vol^m (\gr (v, \Omega')) \geq \Vol^m (\gr (u', \Omega')) = \Vol^m (\gr (u, \Omega))\, .
\]
\end{itemize}
\end{mydef}

\subsection{De Giorgi's $\varepsilon$-regularity theorem.}

It is well known that area minimizing graphs are in fact real analytic if the Lipschitz constant is sufficiently small \footnote{While the restriction on the Lipschitz constant is unnecessary when $n=1$ (where one can use, for instance, the celebrated De Giorgi-Nash theorem), for $n>1$ the situation is much more complicated and it is for instance possible to construct Lipschitz minimal graphs which are not $C^1$, cf. \cite{Lawson-Osserman}. The latter examples are, however, {\em critical points} and not absolute minimizers.}. A ``classical'' path to the statement above is to prove first that $u$ is $C^{1,\alpha}$ and then use Schauder estimates for the Euler-Lagrange equation satisfied by $u$ (which is in fact an elliptic system of partial differential equations) to 
show that $u$ has higher regularity. 

The first step is a corollary of a celebrated theorem by De Giorgi. An appropriately general framework for its statement would be that of area minimizing integer rectifiable currents, which however would require the introduction of a lot of terminology and technical tools from geometric measure theory. The first goal of these notes is thus to illustrare De Giorgi's key idea in the simplified setting of graphs.

\begin{mythm}[De Giorgi]\label{t:de_giorgi}
For every $0<\alpha <1$ there are geometric constants $\varepsilon_0, C>0$ depending only on $\alpha$, $m$ and $n$ with the following property.
Let $\Omega = B_1\subset \mathbb R^m$ and $u: B_1 \to \mathbb R^n$ be a Lipschitz map with $\Lip (u) \leq 1$ whose graph is area minimizing.
Assume
\begin{equation}\label{e:excess}
E := \Vol^m (\gr (u, B_1)) - \omega_m < \varepsilon_0\, ,
\end{equation}
where $\omega_m$ denotes the $m$-dimensional volume of the unit disk\footnote{In these notes we will use the term {\em disk} for 
$B_r (x) := \{y\in \mathbb R^m : |y-x|<r\}$  and the term {\em ball} for $\bB_r (p) := \{q\in \mathbb R^{m+n} : |q-p|<r\}$.} $B_1 = B_1 (0) \subset \mathbb R^m$.
Then $u\in C^{1,\alpha} (B_{1/2})$ and in fact $\|Du\|_{C^{0,\alpha} (B_{1/2})} \leq C E^{\frac{1}{2}}$.
\end{mythm}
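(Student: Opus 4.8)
The plan is to prove a decay estimate for the "tilt excess" of $u$ at all scales and locations inside $B_{1/2}$, and then to upgrade this to the desired Hölder bound on $Du$ via Campanato's characterization of Hölder spaces. The key quantity to track is, for $x_0\in B_{1/2}$ and $r$ small, a planar excess of the form
\[
\mathbf{E}(x_0,r) := \min_{\bar A}\; \frac{1}{r^m}\int_{B_r(x_0)} |Du - \bar A|^2\, ,
\]
where the minimum is over constant $n\times m$ matrices $\bar A$ (equivalently, over the choice of an $m$-plane tangent to the graph). One should first observe that the area excess $E$ in \eqref{e:excess} controls $\int_{B_1}|Du|^2$ up to a constant, since by \eqref{e:area_formula} and the elementary inequality $(1+|Du|^2+\dots)^{1/2}\ge 1+\tfrac12|Du|^2$ one gets $\int_{B_1}|Du|^2\le 2E$; after a first rotation (Lemma~\ref{l:first_rotation}) applied so that $\int_{B_1}Du = 0$, this gives $\mathbf{E}(0,1)\le CE$.

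The heart of the matter is the \emph{excess decay lemma}: there exist a dimensional constant $C$ and, for each $\tau\in(0,1/4)$, a threshold $\varepsilon_0(\tau)$ such that if $\mathbf{E}(x_0,r)<\varepsilon_0$ then
\[
\mathbf{E}(x_0,\tau r) \le C\,\tau^2\,\mathbf{E}(x_0,r)\, .
\]
I would prove this by contradiction and blow-up: suppose it fails along a sequence of minimizers $u_k$ with excesses $\mathbf{E}_k := \mathbf{E}(x_0,r)\to 0$ but $\mathbf{E}(x_0,\tau r)> C\tau^2\mathbf{E}_k$. Rescale to $B_1$, subtract the optimal affine function, and set $w_k := (u_k - \bar A_k x)/\mathbf{E}_k^{1/2}$, so that $\int_{B_1}|Dw_k|^2 = 1$. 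The minimality of $u_k$, together with the Taylor expansion of the area integrand $(1+|Du|^2+\text{h.o.t.})^{1/2} = 1 + \tfrac12|Du|^2 + O(|Du|^4)$, should force $w_k$ to converge (strongly in $W^{1,2}_{loc}$, after extracting a subsequence) to a function $w$ that is \emph{harmonic} — more precisely, each component of $w$ solves the Laplace equation, because the Euler--Lagrange system linearizes to the Laplacian and the higher-order minors contribute terms of order $\mathbf{E}_k$ which vanish in the limit. Classical interior estimates for harmonic functions give $\mathbf{E}_w(0,\tau)\le C\tau^2\mathbf{E}_w(0,1)\le C\tau^2$, and strong convergence transfers this to $w_k$ for large $k$, contradicting the assumption once $C$ is chosen larger than the harmonic constant.

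Granting the decay lemma, the rest is a standard iteration: starting from $\mathbf{E}(x_0,1)\le CE<\varepsilon_0$, one checks that $\mathbf{E}(x_0,\tau^{j})\le (C\tau^2)^j \mathbf{E}(x_0,1)$ stays below the threshold at every step (choosing $\tau$ so that $C\tau^2<\tau^{2\alpha}<1$), hence $\mathbf{E}(x_0,\rho)\le C\,\rho^{2\alpha}\,E$ for all $\rho\le 1$ and all $x_0\in B_{1/2}$; one must also track how the optimal matrices $\bar A_{x_0,\rho}$ change from scale to scale (a geometric series shows they form a Cauchy sequence, with limit $Du(x_0)$, and $|\bar A_{x_0,\rho} - Du(x_0)|^2\le C\rho^{2\alpha}E$). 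This is precisely the Campanato condition $\fint_{B_\rho(x_0)}|Du - Du(x_0)|^2 \le C\rho^{2\alpha}E$, which yields $u\in C^{1,\alpha}(B_{1/2})$ with $[Du]_{C^{0,\alpha}(B_{1/2})}^2\le CE$, i.e. $\|Du\|_{C^{0,\alpha}}\le CE^{1/2}$. The main obstacle is the blow-up step: one must justify the \emph{strong} $W^{1,2}$ convergence $w_k\to w$ (so that the normalization $\int|Dw_k|^2=1$ passes to the limit and the limit is genuinely nonzero where needed) and carefully handle the error terms coming from the nonlinearity of the area functional and from the rotations in Definition~\ref{d:AM} — here one uses that comparison competitors for $u_k$ can be built from the harmonic limit $w$ cut off near $\partial B_1$, and that the $O(|Du|^4)$ and minor terms are quantitatively small relative to $\mathbf{E}_k$.
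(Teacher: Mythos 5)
Your overall strategy --- an excess-decay lemma proved by blow-up and comparison with harmonic functions, followed by a dyadic iteration and Campanato's characterization of H\"older continuity --- is exactly the strategy of the paper (Propositions \ref{p:excess_decay}, \ref{p:harm_bu}, \ref{p:compare2} and Corollary \ref{c:excess_decay}). The iteration and the Campanato/Morrey step are fine, as is the reduction of the area excess to $\int_{B_1}|Du|^2$ (though note that $\sqrt{1+t}\le 1+\tfrac{t}{2}$, so the inequality you quote is reversed; the correct lower bound $\sqrt{1+|Du|^2}\ge 1+c\,|Du|^2$ for $|Du|$ bounded still yields $\int_{B_1}|Du|^2\le C E$).

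The genuine gap is in the blow-up step, precisely at the point you flag as ``the main obstacle'' and then do not resolve. With only $\Lip (u_k)\le 1$ and $\int |Du_k|^2\le C\,\mathbf{E}_k$, the Taylor-expansion error $\int O(|Du_k|^4)$ (and likewise the contribution of the higher minors) is only $O(\mathbf{E}_k)$, \emph{not} $o(\mathbf{E}_k)$: nothing a priori prevents $|Du_k|$ from being of order $1$ on a set of measure comparable to $\mathbf{E}_k$, which contributes to $\int |Du_k|^4$ at the same order as the main term $\tfrac12\int|Du_k|^2$. After dividing by $\mathbf{E}_k$ the area excess is therefore \emph{not} $\tfrac12\int |Dw_k|^2+o(1)$, and neither the harmonicity of the limit nor the strong $W^{1,2}_{loc}$ convergence follows from the Taylor expansion alone. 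Ruling out this scenario requires using minimality \emph{before} the blow-up: this is the content of the improved Lipschitz approximation (Proposition \ref{p:lip_approx}), which shows by a cut-and-paste comparison that the set where $|Du_k|$ exceeds $\mathbf{E}_k^{\gamma}$ carries only $O(\mathbf{E}_k^{1+\kappa})$ of Dirichlet energy; only then are area and Dirichlet energy interchangeable up to $o(\mathbf{E}_k)$. Your proposal omits this step entirely, and without it (or an equivalent substitute) the blow-up does not close. Two further, more minor, points: the competitor built from the harmonic limit need not have small Lipschitz constant, so its graph area is not directly comparable to its Dirichlet energy (the paper fixes this by approximating the limit with Lipschitz functions $h_j$ and passing to a diagonal sequence in the proof of Proposition \ref{p:harm_bu}); and if you blow up around a nonzero optimal matrix $\bar A_k$ without first rotating coordinates as in Lemma \ref{l:first_rotation}, the limit solves the linearization of the minimal surface system at $\lim \bar A_k$ rather than the Laplace equation, so you should either rotate at each scale (as the paper does in Section \ref{ss:sf-to-cil}) or invoke decay estimates for constant-coefficient elliptic systems.
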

Observe that obviously the quantity $E$ is nonnegative and that it equals $0$ if and only if the function $u$ is constant: $E$ measures thus how close is the surface $\gr (u, B_1)$ to be an horizontal disk $B_1 (0) \times \{y\}$.

\subsection{A formula for the excess.}

De Giorgi proved his theorem in \cite{De_Giorgi} in codimension 1 (namely $n=1$) in the framework of reduced boundaries of sets of finite perimeter (which is equivalent to the setting of codimension 1 integral currents, see \cite{Federer} or \cite{Simon}). The statement was then generalized to higher codimension (and to minimizers of a general elliptic integrand) in the framework of integral currents by Almgren in \cite{Almgren68}. In such generality De Giorgi's theorem says that if at a certain scale the mass of an area minimizing current is not much larger than that of a disk of the same diameter, then the current is in fact a $C^{1,\alpha}$ graph (at a slightly smaller scale). 
The interested reader can consult the survey article \cite{DL_CDM} for a quick and not (too) technical intoduction to the topic.

Before proceeding further we want to highlight an important computation which shows how $E$ in \eqref{e:excess} is essentially an $L^2$ measure of the flatness of $\gr (u, B_1)$. More precisely consider the standard basis $e_1, \ldots, e_m, e_{m+1}, \ldots , e_{m+n}$ and let 
\[
\vec{\pi}_0 := e_1 \wedge \ldots \wedge e_m
\] 
be the standard unit $m$-vector orienting $\pi_0 = \mathbb R^m \times \{0\}$. If $x$ is a point of differentiability of $u$ and
$T_p \gr (u)$ is the tangent space to $\gr (u)$ at $p = (x, u(x))$, it is then possible to give a standard orientation to it using the $m$-vector
\[
\vec{T}_p \gr (u) := \frac{(e_1 + du|_p (e_1)) \wedge \ldots \wedge (e_m + du|_p (e_m))}{|(e_1 + du|_p (e_1)) \wedge \ldots \wedge (e_m + du|_p (e_m))|}\, .
\]
In the formula above $du|_p (e_j)$ denotes the following vector of $\{0\}\times \mathbb R^n \subset \mathbb R^{m+n}$:
\[
du|_p (e_j) = \sum_{k=1}^n \frac{\partial u_k}{\partial x_j} (x)\, e_{m+k}\, . 
\]
Moreover, we endow, as customarily, the space of $m$-vectors with a standard euclidean scalar product, which on simple $m$-vectors reads as
\[
\langle v_1\wedge\ldots \wedge v_m , w_1\wedge \ldots \wedge w_m\rangle = \det (\langle v_j, w_k\rangle)\, .
\]
In particular $|\vec{v}| = \sqrt{\langle \vec{v}, \vec{v}\rangle}$ is the induced euclidean norm.

Elementary computations give then the following identity, which we leave as an exercise (in the rest of the notes we use the notation $|\Omega|$ for the Lebesgue $m$-dimensional measure of $\Omega \subset \mathbb R^m$ and we denote by $\Vol^m$ the Hausdorff $m$-dimensional measure on $\mathbb R^{m+n}$).

\begin{myprop}\label{p:cil-sfer}
Let $\Omega\subset \mathbb R^m$ be Borel and $u: \Omega \to \mathbb R^n$ be a Lipschitz map. Then
\begin{equation}\label{e:formula_excess}
\Vol^m (u, \Omega) - |\Omega| = \frac{1}{2} \int_{\gr (u, \Omega)} |\vec{T}_p \gr (u) - \vec{\pi}_0|^2\, d\Vol^m (p)\, . 
\end{equation}
\end{myprop}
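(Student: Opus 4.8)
The plan is to reduce \eqref{e:formula_excess} to the area formula \eqref{e:area_formula} together with two elementary facts about $m$-vectors. Since $u$ is Lipschitz, it is differentiable at a.e.\ $x\in\Omega$ by Rademacher's theorem, so the integrand on the right-hand side of \eqref{e:formula_excess} is defined $\Vol^m$-a.e.\ on $\gr (u,\Omega)$; moreover the area formula, in its change of variables form, rewrites any integral over the graph as an integral over $\Omega$:
\[
\int_{\gr (u,\Omega)} f(p)\, d\Vol^m (p) = \int_\Omega f(x,u(x))\, J(x)\, dx\,,\qquad J(x):=\Big(1+|Du|^2+\sum_{|\alpha|\ge 2}(\det M_{\alpha\beta}(Du))^2\Big)^{\!1/2}\,.
\]

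First I would observe that, abbreviating $V_j:=e_j+du|_p(e_j)$, the Cauchy--Binet formula gives
\[
|V_1\wedge\cdots\wedge V_m|^2=\det\big(\langle V_i,V_j\rangle\big)_{i,j=1}^m=\det\big({\rm Id}+Du^{T}Du\big)\,,
\]
and expanding this last determinant as a sum over its principal minors reproduces exactly $J(x)^2$; hence $|V_1\wedge\cdots\wedge V_m|=J(x)$ and $\vec{T}_p\gr (u)=J(x)^{-1}\,V_1\wedge\cdots\wedge V_m$. Next I would use that each $du|_p(e_j)$ lies in $\{0\}\times\mathbb R^n$, hence is orthogonal to $e_1,\dots,e_m$, so that $\langle V_i,e_j\rangle=\delta_{ij}$ for $1\le i,j\le m$ and therefore
\[
\langle \vec{T}_p\gr (u),\vec{\pi}_0\rangle\,J(x)=\langle V_1\wedge\cdots\wedge V_m,\,e_1\wedge\cdots\wedge e_m\rangle=\det\big(\langle V_i,e_j\rangle\big)_{i,j=1}^m=1\,.
\]

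Finally I would assemble the pieces. Since $\vec{T}_p\gr (u)$ and $\vec{\pi}_0$ are unit vectors, $|\vec{T}_p\gr (u)-\vec{\pi}_0|^2=2-2\langle\vec{T}_p\gr (u),\vec{\pi}_0\rangle$, so the area formula turns the right-hand side of \eqref{e:formula_excess} into
\[
\int_\Omega\big(1-\langle\vec{T}_p\gr (u),\vec{\pi}_0\rangle\big)J(x)\,dx=\int_\Omega J(x)\,dx-\int_\Omega\langle\vec{T}_p\gr (u),\vec{\pi}_0\rangle\,J(x)\,dx=\Vol^m (\gr (u,\Omega))-|\Omega|\,,
\]
where the first integral equals $\Vol^m (\gr (u,\Omega))$ by \eqref{e:area_formula} and the second equals $\int_\Omega 1\,dx=|\Omega|$ by the identity just proved. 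This is precisely \eqref{e:formula_excess}. The only step with genuine content is the Cauchy--Binet identification of $|V_1\wedge\cdots\wedge V_m|$ with the Jacobian $J$ of the area formula; everything else is bookkeeping with determinants together with the orthogonality of the vertical vectors $du|_p(e_j)$ to the horizontal block $\pi_0$, so I do not anticipate a real obstacle. The one thing to keep in mind is that the non-differentiability set of $u$ is Lebesgue-null in $\Omega$ and hence, by the area formula itself, $\Vol^m$-null on $\gr (u,\Omega)$, so it may be safely ignored.
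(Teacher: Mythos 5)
Your proof is correct, and since the paper explicitly leaves this identity as an exercise, your argument is exactly the intended elementary computation: Cauchy--Binet identifies the Gram determinant $\det(\mathrm{Id}+Du^TDu)$ with the square of the area-formula Jacobian, the orthogonality $\langle V_i,e_j\rangle=\delta_{ij}$ gives $\langle \vec{T}_p\gr(u),\vec{\pi}_0\rangle = J(x)^{-1}$, and the polarization $|\vec{T}_p\gr(u)-\vec{\pi}_0|^2=2-2\langle\vec{T}_p\gr(u),\vec{\pi}_0\rangle$ closes the loop. No gaps; the remark about the Rademacher null set being $\Vol^m$-negligible on the graph is the right bookkeeping.
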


For convenience we stop our discussion to introduce a quantity which will play a fundamental role in the rest of our investigations.

\begin{mydef} Let $u: B_r (x) \to \mathbb R^n$ be a Lipschitz map and $\vec{\pi}$ a unit $m$-vector orienting the plane $\pi$. The {\em cylindrical excess} of $\gr (u)$ in 
$\bC_r (x) := B_r (x)\times \mathbb R^n$ with respect to the (oriented) plane $\pi$ is then given by
\begin{equation}\label{e:cylindrical}
\bE (\gr (u), \bC_r (x), \vec\pi) := \frac{1}{2} \int_{\gr (u, B_r (x))} |\vec{T}_p \gr (u) - \vec{\pi}|^2\, d\Vol^m (p)\, .
\end{equation}
If $\vec{\pi} = \vec{\pi}_0$ is the unit $m$-vector which gives to $\mathbb R^m\times \{0\}$ the standard orientation, we then write
$\bE (\gr (u), \bC_r (x))$ in place of $\bE (\gr (u), \bC_r (x), \vec\pi_0)$. 
\end{mydef}

In what follows, if $p= (x,y)\in \mathbb R^m\times \mathbb R^n$ we will often, by a slight abuse of notation, write $\bC_r (p, \vec\pi)$ (resp. $\bC_r (p)$) in place of $\bC_r (x, \vec\pi)$ (resp. $\bC_r (x)$).

\subsection{Codimension 1 and higher codimension.}\label{ss:branching} Observe that, for a general oriented surface $\Sigma$ with no boundary in the cylinder $\Omega \times \mathbb R^n$, the right hand side of \eqref{e:formula_excess} can be small even if the left hand side is fairly large. Indeed, if $\Sigma$ consists of $N$ parallel horizontal planes with the same orientation of $\pi_0$, the right hand side of \eqref{e:formula_excess} is zero, whereas the left hand side is $(N-1) |\Omega|$. The example is, moreover, area minimizing. 

In codimension $1$ De Giorgi's regularity theorem can be considerably strengthened in the following sense. Consider an oriented surface $\Sigma$ with no boundary in $\bC_1 (0) \subset \mathbb R^m \times \mathbb R$, with volume bounded by some constant $M$ and which locally minimizes the area. If 
\begin{equation}\label{e:excess10}
\frac{1}{2} \int_{\Sigma} |\vec{T}_p \Sigma - \vec{\pi}_0|^2 =: \mathbf{E}
\end{equation}
is sufficiently small (depending only upon $n$, $m$ and $M$), then $\Sigma \cap \bC_{1/2} (0)$ consists of finitely many disjoint $C^{1,\alpha}$ graphs over $B_{1/2}$ (and obviously the number $N$ of such graphs can be bounded by $\omega_m^{-1} 2^m M$). Again, the above fact can be conveniently stated and proved in the framework of integral currents. 

In higher codimension the latter version of De Giorgi's $\varepsilon$ regularity theorem is however false, as witnessed by the following example.
Let $\delta>0$ be small and consider the $2$-dimensional surface $\Sigma$ in $\mathbb R^4 = \mathbb C^2$ given by
\begin{equation}\label{e:esempio_1}
\Sigma = \left\{(z,w)\in B_1 \times \mathbb C : \delta^2 z^3 = w^2\right\}\, . 
\end{equation}
A theorem of Federer (based on a computation of Wirtinger) guarantees that $\Sigma$ is an area minimizing oriented surface (without boundary in the cylinder $B_1\times \mathbb R^n$): the Federer-Wirtinger theorem is in fact valid for every holomorphic subvariety of $\mathbb C^n$.  By choosing $\delta$ arbitrarily small we can make \eqref{e:excess10} arbitrarily small. On the other hand there is no neighborhood of the origin in which $\Sigma$ can be described by disjoint $C^1$ graphs over $B_1 \times \{0\} \subset \mathbb C\times \{0\}$. 

Proving the Federer-Wirtinger theorem requires the introduction of the technology of Federer-Fleming integral currents and goes far beyond the scope of these notes (although the relevant idea is elementary; cf. for instance \cite{DL_CDM}). The important message is however that in higher codimension area minimizing oriented $m$-dimensional surfaces can have branching singularities of dimension $m-2$, whereas the latter singularities are not present in area minimizing oriented {\em hypersurfaces}. This phenomenon creates a wealth of extra difficulties for the regularity theory of area minimizing integral currents in codimension higher than $1$.

\subsection{Almgren's regularity theory and the ``center manifold''} In the seventies and early eighties Almgren wrote a celebrated long monograph,
see \cite{Alm}, dedicated to the regularity theory of area minimizing currents in higher codimension, where he was able to finally tackle the presence of branching singularities and prove an optimal dimension bound for them. This complicated theory was recently significantly simplified in a series of joint works by Emanuele Spadaro and the author, see \cites{DS1,DS2,DS3,DS4,DS5} and the survey articles 
\cites{DL_ICM,DL_Survey,DL_CDM}. The latter works have also sparked further research in the area, going beyond Almgren's theory and answering to some open questions in \cite{Alm}, cf. \cites{Spadaro,DFS,Hirsch1,Hirsch2,KW1,FMS,Hirsch3,LM,DSS1,DSS2,DSS3,DSS4,DL_BUMI,GS,Spolaor,DMSV,HSV,KW2,Stuvard1,Stuvard2,DDHM}. 

The most difficult part of Almgren's theory is the construction of what he calls ``center manifold''. In a nutshell, if we consider the example of Section \ref{ss:branching}, we can regard it as a ``two-sheeted'' cover of $B_1\times \mathbb C$. Although such two-sheeted cover has a singularity in $0$, the average of the two sheets is in fact precisely $B_1  \times \{0\}$, so it is a smooth graph over the base.

Almgren's center manifold is a powerful generalization of the latter observation: in an appropriate sense, when $\bE$ in \eqref{e:excess10} is small, the area minimizing surface $\Sigma$ is close to a multiple cover of the base and the average of the sheets enjoys better properties than the whole object. A nontechnical formulation is that it is possible to construct an efficient $C^{3,\beta}$ approximation of the average (with $\beta>0$ a small positive dimensional constant): namely, the $C^{3,\beta}$ norm of such approximation is bounded by $\mathbf{E}^{\frac{1}{2}}$ while the distance between the approximation and the average of the sheets is, at every scale, much smaller than the separation between the most distant sheets. In order to illustrare the subtlety of the above claim, consider the following example:
\[
\left\{(z,w)\in \mathbb C^2 : (w-z^2)^2 = z^{2019}\right\}\, .
\]
The latter surface, which has a branching singularity at the origin (as the surface in \eqref{e:esempio_1}), is a double cover of $B_1 \times \{0\}$. For every $z\neq 0$ with $|z|<<1$, we see two separate sheets above $B_{|z|} (2z)$. They are however extremely close: indeed their separation is smaller than $|z|^{1009}$. At that scale the center manifold must thus approximate the average, which is the map $z\mapsto z^2$, with a degree of precision which is much much higher than the size of the average itself. 

A precise statement of Almgren's theorem would require to go through the entire works \cites{DS1,DS2,DS3,DS4} and the interested reader is again referred to the survey articles \cite{DL_CDM,DL_Survey} for a first account of it. The latter reference explains also why it matters to have $C^{3,\beta}$ estimates. In these notes we focus on one ``striking'' corollary, remarked by Almgren himself in the introduction of \cite{Alm}. Under the assumption of Theorem \ref{t:de_giorgi} (where the ``separation between the most distant sheets'' is $0$, because there is only one sheet!) the center manifold must {\em coincide} with the very surface $\gr (u)$. This gives then the following

\begin{mythm}\label{t:almgren}
There are geometric constants $\beta, \varepsilon_0, C>0$ depending only on $m$ and $n$ with the following property.
Let $\Omega = B_1\subset \mathbb R^m$ and $u: B_1 \to \mathbb R^n$ be a Lipschitz map with $\Lip (u) \leq 1$ whose graph is area minimizing.
Assume
\begin{equation}\label{e:excess_bis}
E := \Vol^m (\gr (u, B_1)) - \omega_m < \varepsilon_0\, 
\end{equation}
and set $\sigma:= (2\sqrt{m})^{-1}$.
Then $u\in C^{3,\beta} (B_\sigma)$ and in fact $\|Du\|_{C^{2,\beta} (B_\sigma)} \leq C E^{\frac{1}{2}}$. 
\end{mythm}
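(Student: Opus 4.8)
\medskip

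\noindent\emph{Strategy and first step.} The plan is to follow the ``classical path'' mentioned right after Theorem~\ref{t:de_giorgi}: extract from De Giorgi's theorem a $C^{1,\alpha}$ estimate with the correct dependence on $E^{1/2}$, note that area minimality makes $u$ a weak solution of the (quasilinear, second order) Euler--Lagrange system of the area integrand in \eqref{e:area_formula}, and then run an interior Schauder bootstrap, keeping every constant dimensional. Fix once and for all $\alpha:=1/2$ (this will be the $\beta$ of the statement). After shrinking $\varepsilon_0$ by a dimensional factor I would apply Theorem~\ref{t:de_giorgi} not only on $\bC_1(0)$ but on every rescaled cylinder $\bC_\rho(x)$ with $\overline{B_\rho(x)}\subset B_1$ and $\rho$ at least a dimensional constant (say $\rho=1/4$): by Proposition~\ref{p:cil-sfer} and monotonicity of the integral $\bE(\gr(u),\bC_\rho(x))\le E$, and rescaling $B_\rho(x)$ to unit size only multiplies the excess by $\rho^{-m}$, so the hypothesis of Theorem~\ref{t:de_giorgi} holds once $\varepsilon_0$ is dimensionally small; a covering argument then gives $u\in C^{1,1/2}(B_{3/4})$ with $\|Du\|_{C^{0,1/2}(B_{3/4})}\le C E^{1/2}=:\eta$, where $\eta$ can be made as small as we please. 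This is why the precise value $\sigma=(2\sqrt m)^{-1}$ plays no real role: any radius $<1$ would work, the final constant depending on it.

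\medskip

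\noindent\emph{The Euler--Lagrange system.} Choosing $A=\mathrm{Id}$ in Definition~\ref{d:AM} and invoking \eqref{e:area_formula}, $u$ minimizes $\int_{B_{3/4}}F(Dv)$ among Lipschitz competitors with the same boundary values, where $F(p):=\bigl(1+|p|^2+\sum_{|\alpha|\ge2}(\det M_{\alpha\beta}(p))^2\bigr)^{1/2}$. Since $F^2$ is a polynomial bounded below by $1$, $F$ is real analytic, and $F(p)=1+\tfrac12|p|^2+O(|p|^4)$; hence $D^2F(0)=\mathrm{Id}$ (on $m\times n$ matrices with the Hilbert--Schmidt product), and since $\|Du\|_{C^0}\le\eta$ is small, $D^2F(Du(x))$ is a small, uniformly elliptic perturbation of the identity. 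Taking first variations ($u+\varepsilon\varphi$ is an admissible competitor for $\varphi\in C^\infty_c$) shows that $u$ is a weak solution of the system $\mathrm{div}\,\bigl(DF(Du)\bigr)=0$.

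\medskip

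\noindent\emph{The bootstrap.} For a coordinate direction $e_s$, testing the equation against difference quotients $\tau_{-h,s}\varphi$ shows that $v_h:=\tau_{h,s}u$ solves weakly the linear divergence-form elliptic system $\mathrm{div}\,(B_h\,Dv_h)=0$ with $B_h(x):=\int_0^1 D^2F\bigl((1-t)Du(x)+t\,Du(x+he_s)\bigr)\,dt$. From $\|Du\|_{C^{0,1/2}(B_{3/4})}\le\eta$ and the smoothness of $F$, the $B_h$ are uniformly $C^{0,1/2}$, uniformly elliptic (ellipticity constant $\ge\tfrac12$), and $\|v_h\|_{L^\infty(B_{5/8})}\le\eta$; interior Schauder estimates for divergence-form elliptic \emph{systems} with H\"older coefficients then give $\|Dv_h\|_{C^{0,1/2}(B_{5/8})}\le C\eta$ uniformly in $h$, whence, letting $h\to0$, $u\in C^{2,1/2}(B_{5/8})$ with $\|Du\|_{C^{1,1/2}(B_{5/8})}\le C\eta$. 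Now one may differentiate classically: $\partial_s u$ solves $\mathrm{div}\,(A\,D\partial_s u)=0$ with $A(x):=D^2F(Du(x))\in C^{1,1/2}$ (legitimate by a composition estimate in H\"older spaces, since $Du$ stays in the fixed compact set where $F$ is analytic) and $\|A\|_{C^{1,1/2}(B_{5/8})}\le C$; Schauder once more yields $\partial_s u\in C^{2,1/2}(B_\sigma)$ with $\|\partial_s u\|_{C^{2,1/2}(B_\sigma)}\le C\|\partial_s u\|_{C^0(B_{5/8})}\le C\eta$. Thus $u\in C^{3,1/2}(B_\sigma)$ and $\|Du\|_{C^{2,1/2}(B_\sigma)}\le C\eta=CE^{1/2}$, which is the statement with $\beta:=1/2$. (Iterating the last step indefinitely recovers real analyticity.)

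\medskip

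\noindent\emph{Where the difficulty lies.} Once De Giorgi's theorem is granted there is no genuinely deep obstruction --- the content is ``merely'' elliptic bootstrap --- and I expect most of the work to be organizational. Two points nonetheless need care. First, we face an elliptic \emph{system}, not a scalar equation, so one must \emph{not} appeal to De Giorgi--Nash (false for systems) but to Schauder theory, which applies here precisely because after the first step the coefficients are continuous and the operator is a small perturbation of $n$ decoupled Laplacians. Second, all constants must depend only on $m$ and $n$: this forces one to first push the $C^{1,\alpha}$ bound to a ball compactly containing $B_\sigma$, to retain the a priori smallness $\eta=CE^{1/2}$ throughout (so that $Du$ never leaves the region where $F$ is real analytic and uniformly convex), and to control $D^2F(Du)$ and $D^3F(Du)$ via composition estimates in H\"older norms.
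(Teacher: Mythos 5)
Your argument is correct, and with $\beta=\alpha=1/2$ it does deliver the stated estimate with dimensional constants; but it is exactly the ``classical path'' that the paper names twice (after Theorem \ref{t:de_giorgi} and again after Theorem \ref{t:almgren}: ``De Giorgi's Theorem \ref{t:de_giorgi} allows to apply the classical Schauder estimates for elliptic systems, hence we can give the estimate $\|Du\|_{C^k}\leq C(k,m,n)E^{1/2}$'') and then deliberately declines to follow. The paper instead proves the theorem via the center-manifold algorithm of Sections 4--5: a dyadic grid on $[-\sigma,\sigma]^m$, optimal planes $\pi_L$ and tilted Lipschitz approximations $f_L$ in each ball $\bB_L$ (this is where $\sigma=(2\sqrt m)^{-1}$ enters, so that the cube sits inside $B_{1/2}$), mollifications $z_L=f_L\ast\varphi_{\ell(L)}$, and glued interpolations $\zeta_k$; the analytic heart is Proposition \ref{p:key_prop}, where the first variation of the area tested against the Lipschitz approximation yields $\|\Delta\bar z\|_{C^0}\leq CE\,\ell(L)^{1+2\beta}$ and $\|\bar z-\bar f\|_{L^1}\leq CE\,\ell(L)^{m+3+2\beta}$, and the $C^{3,\beta}$ bounds then follow from elementary estimates for the Laplacian and interpolation inequalities (Lemmas \ref{l:interaction} and \ref{l:interpolation_bis}) --- no Euler--Lagrange system for $u$ and no Schauder theory ever appear, and the exponent $\beta$ comes out as a small constant fixed by \eqref{e:define_beta}, not a free parameter in $(0,1)$. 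What your approach buys is brevity and entirely standard machinery; what the paper's approach buys is robustness: the whole point, stated in the introduction, is that the argument survives in the multi-sheeted situation of Almgren's regularity theory, where the surface is not the graph of a single-valued function, there is no quasilinear elliptic system to differentiate, and difference quotients and Schauder estimates are unavailable. Within your own write-up the only points needing (routine) care are the ones you already flag --- the covering/rescaling step to push De Giorgi's bound to $B_{3/4}$ with the factor $E^{1/2}$ intact, and the use of the strong Legendre condition (closeness of $D^2F(Du)$ to the identity on $m\times n$ matrices) to justify Schauder for the divergence-form system --- so as a standalone proof of Theorem \ref{t:almgren} it stands, while missing the method the notes are written to showcase.
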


At a first glance the latter statement is all but surprising. After all, De Giorgi's Theorem \ref{t:de_giorgi} allows to apply the classical Schauder estimates for ellyptic systems, hence we can give the estimate $\|Du\|_{C^k} \leq C (k,m,n) E^{\frac{1}{2}}$ for all $k\in \mathbb N$. However, the striking novelty is that Theorem \ref{t:almgren} can be proved without resorting to Schauder estimates and in fact without resorting to any PDE for the function $u$. Although we will use {\em PDE arguments}, they will be so elementary and robust that they can be used even in the general (i.e. multisheeted) situation. 

In \cite{Alm} the corollary above is observed in the introduction as a mere curiosity. For Spadaro and the author of these notes Almgren's remark was however a crucial starting point. After finding an elementary proof of the $C^{3,\beta}$ estimates under the assumptions of De Giorgi's $\varepsilon$-regularity theorem (cf. \cite{DS-cm}), we were able to give in \cite{DS4} a construction of the center manifold which seems much more efficient than Almgren's: while Almgren's proof is almost 500 pages long and occupies half of his monograph, the one of \cite{DS4} cuts the complexity by a factor 10 and its flexibility has proved to be very useful in other contexts (cf. \cites{DSS3,Spolaor,DDHM}). In these lecture notes we will follow essentially \cite{DS-cm} in the simplified setting of Lipschitz graphs to give a ``geometric'' proof of Theorem \ref{t:almgren}.

\section{Improved Lipschitz approximation}

In order to prove both theorems we will make use of a preliminary important estimate: under the small excess assumption, an area minimizing graph turns out to have a much smaller Lipschitz constant on a rather large subset of its domain. The same statement is still correct in the multisheeted situation and it is a fundamental step in Almgren's regularity theory, cf. \cite{DL_Survey}. The proof which we will give in these notes is a simplification of the one given in \cite{DS3} in the multisheeted situation.

\subsection{Spherical excess and scaling invariance.} Consider an oriented surface $\Sigma$ of dimension $m$ in $\mathbb R^{m+n}$ and for every $p\in \Sigma$ denote by $\vec{T}_p \Sigma$ the unit orienting $m$-vector of the tangent space $T_p \Sigma$. 

\begin{mydef} The spherical excess of $\Sigma$ in the ball $\bB_r (p)\subset \mathbb R^{m+n}$ with respect to a unit simple $m$-vector $\vec{\pi}$ is given by
\begin{equation}
\bE (\Sigma, \bB_r (p), \vec{\pi}) = \frac{1}{\omega_m r^m} \int_{\Sigma\cap \bB_r (p)} |\vec{T}_q \Sigma - \vec{\pi}|^2\, d\Vol^m (q)\, .
\end{equation}
The spherical excess of $\Sigma$ in $\bB_r (p)$ is defined as
\begin{equation}
\bE (\Sigma, \bB_r (p)) = \min_{\mbox{$\pi$ simple, $|\pi|=1$}} \bE (\Sigma, \bB_r (p), \vec{\pi})\, . 
\end{equation}
\end{mydef}

Before going on with our discussion, we want to introduce a very elementary yet powerful idea. If $u: B_r (x) \to \mathbb R^n$ is a Lipschitz map whose graph is area minimizing, then 
\[
u_r (y) := \frac{1}{r} (u (x+ry) - u(x))
\]
is a Lipschitz map such that
\begin{itemize}
\item $\Lip (u_r) = \Lip (u)$;
\item the graph of $u_r$ is area minimizing;
\item $\bE (\gr (u), \bB_\rho (x, u(x))) = \bE (\gr (u_r), \bB_{r^{-1} \rho} (0))$. 
\end{itemize}
In other words our problem has a natural invariance under scalings and translations which will be used through the notes to reduce the complexity of several proofs and to gain an intuition on the plausibility of the statements.

\subsection{Elementary remarks} Note the following obvious fact: if $\vec\pi$ is such that 
\[
\bE (\Sigma, \bB_\tau (p), \vec{\pi}) = \bE (\Sigma, \bB_\tau (p))\, ,
\]
then
\begin{align}
\bE(\Sigma, \bB_\sigma (q)) &\leq \bE (\Sigma, \bB_\sigma (q), \vec{\pi}) \leq \left(\frac{\tau}{\sigma}\right)^m \bE (\Sigma, \bB_\tau (p), \vec{\pi})\nonumber\\
&= \left(\frac{\tau}{\sigma}\right)^m \bE (\Sigma, \bB_\tau (p))\, \qquad\qquad\qquad \forall \bB_\sigma (q) \subset \bB_\tau (p) \, .\label{e:obvious}
\end{align}
Similar elementary considerations lead to the following proposition, whose proof is left as an exercise to the reader:

\begin{myprop}\label{p:compare1}
Let $u: \Omega \to \mathbb R^n$ be a map with $\Lip (u) \leq 2$, $p = (x,u(x))$ and $q= (y, u(y))$. Then there are 
geometric constants $C_1, C_2$ and $C_3$ such that
\begin{align}
& C_1^{-1} r^m \leq \Vol^m (\gr (u) \cap \bB_r (p)) \leq C_1 r^m\,\label{e:density_est}\\
& \qquad\qquad\quad \mbox{if } r < \dist (x, \partial \Omega)\, , \nonumber\\
& |\vec{\pi}_1 - \vec{\pi}_2|^2 \leq C_2 (\bE (\gr (u), \bB_{2r} (p), \vec{\pi}_1) + \bE (\gr (u), \bB_{\rho} (p), \vec{\pi}_2))  \label{e:r-2r}\\
&\qquad\qquad\quad\mbox{if } r \leq \rho \leq 2r < \dist (x, \partial \Omega)\nonumber
\end{align}
and
\begin{align}
&|\vec{\pi}_1 - \vec{\pi}_2|^2 \leq C_3 (\bE (\gr (u), \bB_r (p), \vec{\pi}_1) + \bE (\gr (u), \bB_r (q), \vec{\pi}_2)) \label{e:q-p}\\
&\qquad\qquad\quad\mbox{if } r= |p-q| < \min \{\dist (x, \partial \Omega), \dist (y, \partial \Omega)\}\, .\nonumber
\end{align}
\end{myprop}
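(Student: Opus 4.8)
The plan is to obtain all three estimates from the area formula \eqref{e:area_formula} together with the elementary relation between graphs over disks in $\mathbb R^m$ and graphs over balls in $\mathbb R^{m+n}$. The key remark is that, since $\Lip(u)\le 2$, for $z\in B_s(x)$ one has $|(z,u(z))-p|^2=|z-x|^2+|u(z)-u(x)|^2\le 5|z-x|^2$, so that
\[
\gr\big(u,B_{r/\sqrt5}(x)\big)\ \subseteq\ \gr(u)\cap\bB_r(p)\ \subseteq\ \gr\big(u,B_r(x)\big)\,,
\]
the last inclusion because $(z,u(z))\in\bB_r(p)$ forces $|z-x|<r$. By \eqref{e:area_formula} and $\Lip(u)\le2$ one has $|F|\le\Vol^m(\gr(u,F))\le\Lambda|F|$ for every Borel $F\subseteq B_r(x)$, where $\Lambda=\Lambda(m,n)$ bounds the integrand when $|Du|\le2$; since $r<\dist(x,\partial\Omega)$ gives $B_r(x)\subseteq\Omega$, this immediately yields \eqref{e:density_est}.

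For \eqref{e:r-2r} and \eqref{e:q-p} I would use the following common mechanism. Let $S\subseteq\Omega$ be Borel, and suppose $\gr(u,S)$ is contained in two balls $\bB,\bB'$ of radii $r_1,r_2$ and that $|S|\ge c\,r^m$. Since $\vec\pi_1-\vec\pi_2$ is a constant $m$-vector, integrating the pointwise bound $|\vec\pi_1-\vec\pi_2|^2\le 2|\vec\pi_1-\vec T_q\gr(u)|^2+2|\vec T_q\gr(u)-\vec\pi_2|^2$ over $\gr(u,S)$ and using $\Vol^m(\gr(u,S))\ge|S|$ gives
\[
c\,r^m\,|\vec\pi_1-\vec\pi_2|^2\ \le\ 2\int_{\gr(u)\cap\bB}|\vec\pi_1-\vec T_q\gr(u)|^2\,d\Vol^m+2\int_{\gr(u)\cap\bB'}|\vec T_q\gr(u)-\vec\pi_2|^2\,d\Vol^m\,,
\]
that is $c\,|\vec\pi_1-\vec\pi_2|^2\le 2\omega_m\big((r_1/r)^m\,\bE(\gr(u),\bB,\vec\pi_1)+(r_2/r)^m\,\bE(\gr(u),\bB',\vec\pi_2)\big)$. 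For \eqref{e:r-2r} I take $S=B_{r/\sqrt5}(x)$, $\bB=\bB_{2r}(p)$, $\bB'=\bB_\rho(p)$: the first paragraph shows $\gr(u,S)\subseteq\bB_r(p)$, and $\bB_r(p)\subseteq\bB_\rho(p)\subseteq\bB_{2r}(p)$ because $r\le\rho\le2r$, while $|S|=\omega_m5^{-m/2}r^m$ and $(2r/r)^m,(\rho/r)^m\le2^m$; this gives \eqref{e:r-2r} with $C_2=C_2(m,n)$. Note that the lower bound $\rho\ge r$ is exactly what keeps the factor $(\tfrac{2r}{\rho})^m$ bounded in this step.

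The estimate \eqref{e:q-p} follows from the same mechanism applied with $\bB=\bB_r(p)$, $\bB'=\bB_r(q)$ and $S:=\{z\in\Omega:(z,u(z))\in\bB_r(p)\cap\bB_r(q)\}$ (which is contained in $B_r(x)\subseteq\Omega$), so everything reduces to the purely geometric claim
\[
|S|\ \ge\ c(m,n)\,r^m\,,\qquad r=|p-q|\,,
\]
after which the mechanism produces $C_3=C_3(m,n)$. By the first paragraph the projections $P_p:=\{z:(z,u(z))\in\bB_r(p)\}$ and $P_q$ contain $B_{r/\sqrt5}(x)$ and $B_{r/\sqrt5}(y)$, and $S=P_p\cap P_q$. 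When $|x-y|$ is bounded away from $2r/\sqrt5$ the disks $B_{r/\sqrt5}(x)$ and $B_{r/\sqrt5}(y)$ already overlap in a set of measure $\ge c\,r^m$; in the complementary regime $|x-y|$ is comparable to $r$, hence $|u(x)-u(y)|$ is much smaller than $|x-y|$, and one produces a point $\xi=(1-t_0)x+t_0y$ with $(\xi,u(\xi))$ well inside both $\bB_r(p)$ and $\bB_r(q)$ --- its existence coming from an intermediate value argument for $t\mapsto\max\{|(z(t),u(z(t)))-p|,|(z(t),u(z(t)))-q|\}$ along $z(t)=(1-t)x+ty$, together with $\Lip(u)\le2$ --- so that $B_\delta(\xi)\subseteq S$ for some $\delta\ge c\,r$.

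The main obstacle is precisely this last geometric claim, and inside it the borderline case $|x-y|\approx 2r/\sqrt5$ (equivalently $|u(x)-u(y)|\approx\tfrac12|x-y|$): there the obvious candidate points --- the midpoint of the two disks, or the points reached by sliding a short distance along $[x,y]$ from $x$ or from $y$ --- all sit on the boundary of $P_p\cap P_q$, and one must genuinely use that $\gr(u)$ joins $p$ to $q$ in order to extract an interior point with a quantitative margin. Everything else is a routine manipulation of the area formula and of the Lipschitz bound.
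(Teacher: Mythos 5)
The paper leaves this proposition entirely as an exercise, so there is no official proof to compare against; I will judge your argument on its own terms. Your proofs of \eqref{e:density_est} and \eqref{e:r-2r} are correct and complete: the sandwich $\gr(u,B_{r/\sqrt5}(x))\subseteq\gr(u)\cap\bB_r(p)\subseteq\gr(u,B_r(x))$, the bounds $|F|\le\Vol^m(\gr(u,F))\le\Lambda|F|$, and the ``integrate the constant $|\vec\pi_1-\vec\pi_2|^2$ over a common piece of the graph of measure $\gtrsim r^m$'' mechanism are exactly the right tools, and for \eqref{e:r-2r} the common piece $B_{r/\sqrt5}(x)$ does the job. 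The reduction of \eqref{e:q-p} to the claim $|S|\ge c(m,n)\,r^m$, $S=\{z:(z,u(z))\in\bB_r(p)\cap\bB_r(q)\}$, is also the right strategy.

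The gap is that this last geometric claim is not proved, and the argument you hint at does not close it. Write $a=|x-y|$, $b=|u(x)-u(y)|$, so $r^2=a^2+b^2$ and $b\le 2a$. Your two regimes are exactly $b>a/2$ (disks $B_{r/\sqrt5}(x)$, $B_{r/\sqrt5}(y)$ overlap; quantitatively only when $b\ge a/2+\epsilon a$) and $b<a/2$ (points on $[x,y]$ at distance $s<(2a-4b)/5$ from $x$ are forced into both balls; quantitatively only when $b\le a/2-\epsilon a$). These cover complementary open regimes and genuinely miss a whole range of ratios $b/a$ around $1/2$, not just a single borderline value, and your parenthetical ``$|u(x)-u(y)|$ is much smaller than $|x-y|$'' is false there. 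Moreover the intermediate-value device you propose fails: the function $t\mapsto\max\{|\Gamma(t)-p|,|\Gamma(t)-q|\}$ equals $r$ at both endpoints, so the IVT gives nothing, and the natural candidate it suggests --- the point of the graph over $[x,y]$ equidistant from $p$ and $q$ --- can lie \emph{outside} the closed lens: for $n=1$, $u(x)=u(y)$ and $u$ a tent of slope $2$ peaked over the midpoint, that point is at distance $\tfrac{\sqrt5}{2}\,r>r$ from both centers (while $S$ is large near $x$). Conversely, when $b=a/2$ and $u$ moves away from $u(y)$ at maximal rate near $x$, every point of the graph near $x$ lies outside $\bB_r(q)$, so the ``near-$x$'' mechanism also genuinely fails there. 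The claim $|S|\ge c\,r^m$ is nevertheless true, but proving it in the intermediate regime requires actually tracking the curve: e.g.\ one shows that if $(z,u(z))$ avoids $\bB_r(q)$ near $x$ then $u$ must spend its Lipschitz budget moving in the direction $-(u(y)-u(x))$, after which the constraint that the graph re-enter $\bB_r(p)$ near $q$ from the correct side forces it through the lens on a set of $z$ of measure $\ge c\,a$ at intermediate scales. Until an argument of this kind is supplied, \eqref{e:q-p} is not proved.
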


\subsection{Comparing spherical and cylindrical excess.}\label{ss:sf-to-cil} We now want to compare the two slightly different notions of excess that we have given so far. Consider $p = (x,u(x))$. First of all, since $\bB_r (p) \subset \bC_r (x)$, we have the
obvious inequality
\[
\bE (\gr (u), \bB_r (p), \vec{\pi}) \leq 2 \omega_m^{-1} r^{-m} \bE (\gr (u), \bC_r (x), \vec{\pi})\, .
\] 
We next wish to show a sort of converse, under the assumption that the domain $\Omega$ of $u$ contains $B_1 (0)$.

By scaling invariance and translation invariance assume $x=0$, $u(0)=0$ and $r=1$. 
Under the assumption that $\bE (\gr (u), \bB_1, \vec{\pi}_0) < \varepsilon_1$ is rather small, fix a $\pi_1$ such that 
$\bE (\gr (u), \bB_1, \vec{\pi}_1) = \bE (\gr (u), \bB_1)$. Observe thus that, by Proposition \ref{p:compare1}, 
\begin{equation}\label{e:primo_tilt}
|\vec{\pi}_0- \vec{\pi}_1| \leq C \varepsilon_1^{\frac{1}{2}}\, .
\end{equation}
Denote by $\bp$ the orthogonal projection of $\mathbb R^{m+n}$ onto $\pi_1$. We claim that, for every $\eta>0$, by choosing
$\varepsilon_1$ sufficiently small, 
\begin{equation}\label{e:claim_eta}
\bp (\gr (u) \cap \bB_1) \supset \bB_{1-\eta} \cap \pi_1\, .
\end{equation}
If we denote by $\pi_1^\perp$ the orthogonal complement of $\pi_1$ and by $\bC'$ the cylinder 
\begin{equation}\label{e:cilindro_storto}
\bC' := (\bB_{1-\eta} \cap \pi_1)+\pi_1^\perp\, ,
\end{equation}
\eqref{e:claim_eta} obviously implies that
$\gr (u) \cap \bC'\subset \bB_1$ and allows us to establish the inequality $\bE (\gr (u), \bC'_{1-\eta}, \vec{\pi}_1) 
\leq \frac{\omega_m}{2} \bE (\gr (u), \bB_1)$.

We briefly sketch the proof of \eqref{e:claim_eta}. First we know from Lemma \ref{l:first_rotation} that $\gr (u)$ is the graph of some function $u': \Omega' \to \pi_1^\perp$ over some domain $\Omega' \subset \pi_1$. Moreover, by \eqref{e:primo_tilt}, Lemma \ref{l:first_rotation} gives us the estimate 
\[
\Lip (u') \leq 1 + C \varepsilon_1^{\frac{1}{2}}\leq 2
\] 
and, since $(0,0) \in \gr (u', \Omega')$, $u'(0)=0$. Thus 
$\gr (u)\cap \bB_1$ certainly contains $\gr (u', B_{1/4})$, provided $\varepsilon_1$ is smaller than a geometric constant. Let now $\rho$ be the maximal radius for which $B_\rho \subset \Omega'$ and $\gr (u', B_\rho )\subset \bB_1$.
First observe that
\begin{equation}\label{e:L2-bound}
\int_{B_\rho} |Du'|^2 \leq C \varepsilon_1\, ,
\end{equation}
because $|Du' (x')| \leq C |\vec{T}_{(x', u' (x'))} \gr (u') - \vec{\pi}_1|$. 
Next we can interpolate between \eqref{e:L2-bound} and $\|Du'\|_{L^\infty}\leq 2$ to easily conclude
\[
\|Du'\|_{L^{2m} (B_\rho)} \leq C \|Du'\|_{L^\infty (B_\rho)}^{1-\frac{1}{m}} \|Du'\|_{L^2 (B_\rho)}^{\frac{1}{m}}
\leq C \varepsilon_1^{\frac{1}{2m}}\, .
\]
Using Morrey's embedding and $u' (0) =0$ we thus get
\[
\|u'\|_{L^\infty (B_\rho)} \leq C \varepsilon_1^{\frac{1}{2m}}\, .
\]
However by the Lipschitz regularity of $u'$ and the maximality of $\rho$, there must be a point $x'$ with $|x'|= \rho$ such that either
$|(x', u' (x'))|=1$ or $\lim_{\sigma \to 1} ( \sigma x', u( \sigma x')) \not\in \gr  (u, \Omega)$.
In the second case, consider to have extended the function $u$ to the closure of $\Omega$. Since $\Omega$ contains $B_1$, we then must have that $|(x', u(x'))|\geq 1$, in particular we fall again in the first case. Thus $\rho^2 + |u' (x)'|^2 = |x'|^2 + |u' (x)'|^2 =1$. Inserting the estimate for $\|u'\|_{L^\infty}$, we get
\[
\rho^2 \geq 1 - C \varepsilon_1^{\frac{1}{m}}\, .
\]
For $\varepsilon_1$ sufficiently small the latter inequality guarantees $\rho \geq 1-\eta$ (in fact we can give an effective bound for how small $\varepsilon_1$ needs to be in terms of $\eta$, namely $\varepsilon_1\leq C (1-\eta)^{m}$ suffices; these type of bounds are indeed valid in general for suitable generalizations of surfaces which are stationary for the area functional). 

We summarize our discussion in the following lemma.

\begin{mylem}\label{l:sp-to-cyl}
For every $\eta>0$ there is $\bar\varepsilon >0$ with the following property. Let $u: B_1 \to \mathbb R^n$ be a Lipschitz map with $\Lip (u) \leq 1$ and $u(0)=0$ and assume that $\bE (\gr (u), \bB_1, \vec{\pi}_0) \leq \bar{\varepsilon}$. Let $\vec{\pi}_1$ be the unit $m$-vector such that $\bE (\gr (u) , \bB_1, \vec{\pi}_1) = \bE (\gr (u), \bB_1)$, let $\pi_1$ be the $m$-dimensional plane which is oriented by $\vec{\pi}_1$ and denote by $\bC$ the cylinder in \eqref{e:cilindro_storto}. Then:
\begin{itemize}
\item $\bC' \cap \gr (u)$ is the graph of a Lipschitz map 
\[
v: \bB_{1-\eta}\cap \pi_1\to \pi_1^\perp
\]
with $\Lip (v) \leq 2$,
\item $\gr (v) \subset \gr (u) \cap \bB_1$, 
\item and 
\begin{equation}
\bE (\gr (v), \bC') \leq \frac{\omega_m}{2} \bE (\gr (u), \bB_1)\, .
\end{equation}
\end{itemize} 
\end{mylem}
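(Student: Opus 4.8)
The plan is to organize the discussion preceding the statement into three steps: a tilting step producing a graph over the optimal plane $\pi_1$, an analytic step (an $L^2$ bound on the gradient followed by an interpolation--Morrey estimate on the oscillation), and a continuation argument pinning down the domain. To begin, I would invoke Proposition~\ref{p:compare1}: comparing $\bE(\gr(u),\bB_1,\vec{\pi}_1)=\bE(\gr(u),\bB_1)$ with the hypothesis $\bE(\gr(u),\bB_1,\vec{\pi}_0)\le\bar\varepsilon$ through~\eqref{e:r-2r} yields the tilt bound $|\vec{\pi}_0-\vec{\pi}_1|\le C\bar\varepsilon^{1/2}$. Feeding this into Lemma~\ref{l:first_rotation} produces, in the rotated coordinates, a map $u'\colon\Omega'\to\pi_1^\perp$ with $\gr(u',\Omega')=\gr(u,\Omega)$, $\Lip(u')\le 1+C\bar\varepsilon^{1/2}\le 2$ and $u'(0)=0$; since the tilt is small one also gets $B_{1/4}\subset\Omega'$, and then $\Lip(u')\le2$ forces $\gr(u',B_{1/4})\subset\bB_1$.

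Next I would let $\rho$ be the supremum of the radii $r$ for which $B_r\subset\Omega'$ and $\gr(u',B_r)\subset\bB_1$, so that $\rho\ge 1/4$, and show $\rho\ge 1-\eta$ once $\bar\varepsilon$ is small enough in terms of $\eta$. Using the pointwise inequality $|Du'(x')|\le C\,|\vec{T}_{(x',u'(x'))}\gr(u')-\vec{\pi}_1|$ and integrating over $\gr(u',B_\rho)\subset\gr(u)\cap\bB_1$ gives $\int_{B_\rho}|Du'|^2\le C\bar\varepsilon$. Interpolating this against $\|Du'\|_{L^\infty}\le2$ yields $\|Du'\|_{L^{2m}(B_\rho)}\le C\bar\varepsilon^{1/(2m)}$, and Morrey's embedding together with $u'(0)=0$ then gives $\|u'\|_{L^\infty(B_\rho)}\le C\bar\varepsilon^{1/(2m)}$. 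To close, the maximality of $\rho$ and continuity produce a point $x'$ with $|x'|=\rho$ that lies either on $\partial\bB_1$ or on the lateral boundary of the region over which $\gr(u)$ is defined; since $\Omega\supset B_1$, the second alternative forces $|(x',u'(x'))|\ge1$ and hence $\rho^2+|u'(x')|^2=1$, so inserting the $L^\infty$ bound gives $\rho^2\ge 1-C\bar\varepsilon^{1/m}$ and thus $\rho\ge1-\eta$ for $\bar\varepsilon$ suitably small.

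Finally, with $v:=u'|_{\bB_{1-\eta}\cap\pi_1}$ the first two bullets are immediate: $\Lip(v)\le2$, and $\gr(v)=\gr(u)\cap\bC'\subset\gr(u)\cap\bB_1$ because $\rho\ge1-\eta$; the last bullet is the one-line monotonicity $\bE(\gr(v),\bC')=\frac12\int_{\gr(v)}|\vec{T}_p\gr(v)-\vec{\pi}_1|^2\le\frac12\int_{\gr(u)\cap\bB_1}|\vec{T}_p\gr(u)-\vec{\pi}_1|^2=\frac{\omega_m}{2}\bE(\gr(u),\bB_1)$. I expect the main obstacle to be the second step, namely establishing that over the correctly tilted plane the graph covers essentially all of $B_{1-\eta}$: it is there that the $L^2$ smallness of $Du'$, the interpolation--Morrey chain, and the continuation argument must be combined; everything else is a direct appeal to Lemma~\ref{l:first_rotation} and Proposition~\ref{p:compare1} plus an elementary monotonicity estimate.
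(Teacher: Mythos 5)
Your proposal follows the paper's own argument step for step: the tilt bound from Proposition~\ref{p:compare1}, the change of coordinates via Lemma~\ref{l:first_rotation}, the maximal-radius continuation argument combined with the $L^2\to L^{2m}\to L^\infty$ chain (interpolation plus Morrey) to get $\rho\ge 1-\eta$, and the final monotonicity of the excess integral. It is correct and essentially identical to the sketch given in Section~\ref{ss:sf-to-cil}.
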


\subsection{Improved Lipschitz approximation.}\label{ss:Lipschitz} We now fix our attention on the map $v$ of the above lemma. Since the cylindrical excess $E$ controls $\int |Dv|^2$, the classical Chebyshev's inequality shows that
\[
|\left\{|Dv|\leq E^\gamma\right\}\leq C E^{1-2\gamma}\, .
\] 
for every positive exponent $\gamma$. More refined arguments from real harmonic analysis show a stronger result: there is a suitable set of measure no larger than $C E^{1-2\gamma}$ such that the restriction of $v$ to its complement has Lipschitz constant at most $E^\gamma$. For general functions this is the best we can do. However, the minimality assumption on $\gr (v)$ allows to give a much better bound, which is stated in Proposition \ref{p:lip_approx} below.

The proof of the proposition will introduce two important points: 
\begin{itemize}
\item The fact that the Dirichlet energy and the area integrand are comparable in areas where the tangents to the graph are almost horizontal;
\item A ``cut-and-paste'' idea to construct competitors for testing the minimality of the graph of $v$.
\end{itemize}
Both these two points will be exploited often in the rest of the notes: in the proof of the next proposition we will see
all the details, but at later stages we will be less precise and just refer to the arguments of this section.

\begin{myprop}\label{p:lip_approx}
There are $\bar \varepsilon>0$ and $\gamma>0$ geometric constants with the following property.
Let $v: B_r (x) \to \mathbb R^n$ be a Lipschitz function with $\Lip (v) \leq 2$ whose graph is area minimizing and such that
\[
E := r^{-m} \bE (\gr (v), \bC_r (x)) < \bar \varepsilon\, .
\]
If we set $\rho:= r (1-E^\gamma)$, then there is a set $K\subset B_\rho (x)$ such that
\begin{align}
|B_\rho (x)\setminus K| &\leq r^m E^{1+\gamma}\\
\Lip (v|_K) &\leq E^\gamma\, .
\end{align}
\end{myprop}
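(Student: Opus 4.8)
The plan is to follow the standard ``Lipschitz approximation'' scheme from De Giorgi-type regularity, split into a soft ``harmonic analysis'' part and a hard ``minimality'' part. First I would reduce by scaling to $r=1$, $x=0$. Introduce the non-centered maximal function of $|Dv|^2$,
\[
f(y) := \sup_{0<s<1-|y|} \frac{1}{|B_s(y)|}\int_{B_s(y)} |Dv|^2\, ,
\]
and set $K := \{y\in B_\rho : f(y) \leq E^{2\gamma}\}$ where $\gamma$ is a small geometric constant to be fixed (say $\gamma = \tfrac{1}{4m}$ or smaller). By the weak $(1,1)$ bound for the maximal operator, $|B_\rho\setminus K| \leq C E^{-2\gamma}\int_{B_1}|Dv|^2 \leq C E^{1-2\gamma}$; this is the easy bound, but \emph{not} the one claimed. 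To upgrade $E^{1-2\gamma}$ to $E^{1+\gamma}$ we must use minimality. The Lipschitz estimate $\Lip(v|_K)\leq C E^\gamma$ on the good set is classical: at two points $y,z\in K$ one compares $v(y),v(z)$ through telescoping over dyadic balls joining them, using that the averaged gradient energy on each is controlled by $f$, together with a Poincaré inequality on each ball; the Lipschitz constant on $K$ thus comes out as $\sqrt{E^{2\gamma}} = E^\gamma$ up to a dimensional constant (which can be absorbed into a slightly smaller $\gamma$, or simply replacing $E^\gamma$ by $CE^\gamma$ and then reabsorbing since $\bar\varepsilon$ is small).

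The heart of the matter is the measure bound $|B_\rho\setminus K|\leq E^{1+\gamma}$, and here I would run the comparison/competitor argument. On $B_\rho\setminus K$ the maximal function exceeds $E^{2\gamma}$, so by a Besicovitch/Vitali covering one finds a controlled family of balls $B_{s_i}(y_i)$ on which $\fint |Dv|^2 \geq \tfrac{1}{2}E^{2\gamma}$; it suffices to bound $\sum |B_{s_i}(y_i)|$. Fix one such ball $B_s(y)$. If $\fint_{B_s(y)}|Dv|^2$ were genuinely of order $E^{2\gamma}$ (comparable, not much larger), I would build a competitor $w$ on $B_s(y)$: outside a slightly smaller ball $B_{(1-t)s}(y)$ set $w=v$, and on the core interpolate—e.g. take $w$ on the core to be a suitable average/harmonic replacement, or simply the linear interpolation to the boundary trace on the annulus as in the classical ``cut-and-paste''. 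The key computation is that, since tangents are almost horizontal, by Proposition \ref{p:cil-sfer} (or rather the pointwise inequality $|Dv|^2/C \le |\vec T\gr(v)-\vec\pi_0|^2 \le C|Dv|^2$ valid when $|Dv|$ is small, together with the expansion of the area integrand $\sqrt{1+|Dv|^2+\dots} = 1 + \tfrac12|Dv|^2 + O(|Dv|^4)$), the area of $\gr(v)$ over the core exceeds $|B|$ by roughly $\tfrac12\int|Dv|^2$ minus a higher order error, while the competitor's graph area over the core exceeds $|B|$ by at most $C\int_{\text{annulus}}|Dv|^2 + C t\,|B_s|$ plus the interpolation error. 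Minimality forces $\int_{\text{core}}|Dv|^2 \leq C\int_{\text{annulus}}|Dv|^2 + (\text{error})$; a standard ``hole-filling'' (add $\int_{\text{core}}$ to both sides) then yields a genuine decay: $\int_{B_{s/2}(y)}|Dv|^2 \leq \theta\int_{B_s(y)}|Dv|^2$ with $\theta<1$, hence by iteration a Morrey-type decay $\fint_{B_s(y)}|Dv|^2 \leq C s^{2\alpha}\, E$ for some $\alpha>0$. Plugging this back: on a bad ball $E^{2\gamma}\leq \fint_{B_s}|Dv|^2 \leq C s^{2\alpha}E$ gives $s^{2\alpha}\geq c E^{2\gamma-1}$, which \emph{combined with} the total energy bound $\sum s_i^m \fint_{B_{s_i}}|Dv|^2 \leq C E$ (from the $s_i$ being essentially disjoint) lets one bound $\sum|B_{s_i}|$. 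Indeed $\sum |B_{s_i}| = \sum s_i^m \leq (cE^{2\gamma-1})^{-\?}\cdot(\dots)$—more cleanly, using $s_i^m\,\fint_{B_{s_i}}|Dv|^2 \geq \tfrac12 s_i^m E^{2\gamma}$ and $\sum s_i^m\fint \le CE$ gives directly $\sum s_i^m \leq C E^{1-2\gamma}$, which is still not enough, so one must instead use the decay to gain the extra power: from $\fint_{B_{s_i}}|Dv|^2\le C s_i^{2\alpha}E$ and $\fint_{B_{s_i}}|Dv|^2\ge \tfrac12 E^{2\gamma}$ one gets $s_i\ge c E^{(2\gamma-1)/(2\alpha)}$, i.e. the bad balls are not too small, so there are few of them; combined with the disjointness bound $\sum s_i^m\le CE^{1-2\gamma}$ and the lower bound on each $s_i$ one obtains $\#\{i\}\le CE^{1-2\gamma}\cdot E^{-m(2\gamma-1)/(2\alpha)}$ and then $|B_\rho\setminus K|\le \sum s_i^m$ can be driven below $E^{1+\gamma}$ provided $\gamma$ is chosen small enough relative to $\alpha$ and $m$.

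The main obstacle, and the step deserving the most care, is the competitor construction and the resulting hole-filling: one must (i) arrange the competitor to agree with $v$ near $\partial B_s(y)$ while keeping its graph area controlled, which forces choosing the gluing radius $(1-t)s$ via a pigeonhole/Fubini argument on the annuli so that the boundary trace of $v$ on the chosen sphere has small tangential energy, and (ii) control the \emph{higher-order} error terms $\int |Dv|^4$ in the area expansion, which is exactly where $\Lip(v)\le 2$ (and better, where the maximal function is still only moderately large, $\le E^{2\gamma}$) is used to absorb them into the main term. A secondary subtlety is bookkeeping the loss of domain: the radius $\rho = r(1-E^\gamma)$ leaves a collar of width $rE^\gamma$, which is exactly the room needed for all the dyadic chains and covering balls to stay inside $B_1$; one checks the maximal function is well-defined (the sup is over $s<1-|y|$) precisely on $B_\rho$. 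Everything else—the covering lemma, the Poincaré and Morrey inequalities, the telescoping estimate for $\Lip(v|_K)$—is routine once the energy-decay inequality on a single ball is in hand.
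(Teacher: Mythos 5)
Your first part (the maximal function, the weak $(1,1)$ bound giving $|B_\rho\setminus K|\le CE^{1-2\gamma}$, and the telescoping/Poincar\'e argument for $\Lip(v|_K)\le CE^\gamma$) matches the paper. The upgrade of the measure bound is where your argument has a genuine gap, in two places. First, the decay you extract from hole-filling is not the one you need. A Caccioppoli-type inequality $\int_{B_{s/2}(y)}|Dv|^2\le\theta\int_{B_s(y)}|Dv|^2$ with $\theta=C/(C+1)<1$ iterates to $\int_{B_\sigma(y)}|Dv|^2\le C(\sigma/s)^\epsilon\int_{B_s(y)}|Dv|^2$ with $\epsilon=\log_2(1/\theta)$ a \emph{small} positive number; dividing by $\sigma^m$ this gives no control on the averages $\sigma^{-m}\int_{B_\sigma}|Dv|^2$ as $\sigma\to 0$ (one would need $\epsilon>m$, i.e.\ $\theta<2^{-m}$, which hole-filling never delivers). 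The genuine Morrey decay $\sigma^{-m}\int_{B_\sigma(y)}|Dv|^2\le C\sigma^{2\alpha}E$ that you invoke is precisely De Giorgi's excess decay (Proposition \ref{p:excess_decay}); note that if it held for all centers and radii it would force the bad set to be \emph{empty} for small $E$ (on a bad ball it gives $E^{2\gamma}\le CE$, impossible for $E$ small since $2\gamma<1$), i.e.\ you would be proving $\Lip(v)\le CE^{1/2}$ outright. In this paper that decay is derived \emph{from} the Lipschitz approximation via the harmonic blow-up (Proposition \ref{p:harm_bu}), so your route is circular unless you supply an independent proof of the decay, which is the hard part of the whole theory. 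Second, even granting the decay, the final bookkeeping does not close: a lower bound on the radii $s_i$ combined with $\sum_i s_i^m\le CE^{1-2\gamma}$ bounds the \emph{number} of bad balls, but it does not improve the upper bound on $\sum_i s_i^m$, which is the quantity controlling $|B_\rho\setminus K|$.

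The paper's mechanism is different and worth internalizing: one builds a \emph{single global} competitor from the mollified Lipschitz extension $z=w*\varphi_{E^\vartheta}$ of $v|_K$, glued back to $v$ across two thin annuli whose radii are chosen by Fubini so that the relevant boundary energies are controlled. The Taylor expansion of the area integrand is applied only where the slope is already of order $E^\lambda$ (on $K$ and for $z$), while on $B_\sigma\setminus K$ one uses the crude lower bound $\Vol^m(\gr(v),\cdot)-|\cdot|\ge C^{-1}\int|Dv|^2$. Minimality then yields the superlinear \emph{energy} bound $\int_{B_\sigma\setminus K}|Dv|^2\le CE^{1+\kappa}$, and a second, localized application of the weak $(1,1)$ estimate, in the form $|\{m|Dv|^2>t\}|\le Ct^{-1}\int_{\{m|Dv|^2>C^{-1}t\}}|Dv|^2$ with $t=E^{2\omega}$ and $2\omega<\kappa$, converts this into the superlinear measure bound $CE^{1+\kappa-2\omega}\le CE^{1+\omega}$. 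If you want to salvage a ball-by-ball scheme, you must replace the Morrey decay by a statement of this type, namely that the energy concentrated on the bad set is $O(E^{1+\kappa})$; that is exactly what the global competitor provides.
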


Before coming to the proof we introduce a useful terminology, which will be used often in the rest of the notes

\begin{mydef}\label{d:Lip}
Consider the set $K$ of Proposition \ref{p:lip_approx} and let $w$ be a Lipschitz extension of $v|_K$ to $B_\rho$ which does not increase its Lipschitz constant by more than a geometric factor\footnote{Indeed by Kirszbraun's Theorem we can require $w$ to have the {\em same} Lipschitz constant as $v|_K$. This is however a sophisticated theorem, whereas an extension which looses the geometric factor $\sqrt{n}$ can be constructed in an elementary way and suffices for our purposes.}. Although $w$ is not unique, we will call it the {\em $E^\gamma$-Lipschitz approximation of $v$}.
\end{mydef}

\begin{proof} First of all observe that we can allow a geometric constant $C$ in front of the estimates: then choosing a smaller exponent $\gamma$ and a sufficiently small $\bar\varepsilon$ we can eliminate the constant. Secondly we will focus on the proof of the following weaker statement: there is an $\omega>0$ (which depends only upon $m$ and $n$) and a set $L\subset B_{r/2} (x)$ satisfying
\begin{align}
|B_{r/2} (x)\setminus L|\leq C r^m E^{1+\omega}\label{e:claim_omega1}\\
\Lip (v|_L) \leq C E^\omega\, ,\label{e:claim_omega2}
\end{align}
for some exponent $\omega>0$. It can be in fact easily checked that the method of proof allows to pass from $r$ to $r (1-E^\mu)$ for a suitably small exponent $\mu$ and $\gamma$ can then be chosen to be $\min \, \{\omega, \mu\}$. 
Finally, by scaling and translating, without loss of generality we can prove \eqref{e:claim_omega1}-\eqref{e:claim_omega2} when $r=2$ and $x=0$. 

Summarizing, we are left with proving the following. Let $v: B_2 \to \mathbb R^n$ be a Lipschitz map with $\Lip (v)\leq 2$, whose graph is area minimizing and
\[
E := \bE (\gr (v), \bC_2) < \bar{\varepsilon}\, .
\]
We look for a set $K\subset B_1$ such that
\begin{align}
|B_1 \setminus K| &\leq C E^{1+\omega}\\
\Lip (v|_K) &\leq C E^\omega\, .
\end{align}
First of all observe that, by \eqref{e:formula_excess}, \eqref{e:area_formula} and elementary properties of the integrand in \eqref{e:area_formula},
\[
C^{-1} \int_{B_2} |Dv|^2 \leq E \leq C \int_{B_2} |Dv|^2\, 
\]
where $C$ is a geometric constant. Let $M |Dv|^2$ denote the usual maximal function
\[
M |Dv|^2 (y) := \sup_{r<1/2} r^{-m}\int_{B_r (y)} |Dv|^2\, .
\]
Recalling the classical weak $L^1$ estimate (see for instance \cite{Stein}), if we set 
\[
K:= \left\{y\in B_{3/2} : M |Dv|^2 \leq E^{2\lambda}\right\}\, ,
\] 
then\footnote{At this point the reader can easily check that in fact the estimate is valid for 
\[
K':= \left\{y\in B_{2-E^\mu} : M |Dv|^2\leq E^{2\lambda}\right\}
\]
in place of $K$, where $\mu$ is a suitable positive exponent depending on $m$ and $\lambda$. In this and similar considerations one can refine all the arguments to pass from the outer radius $2$ to an inner radius $2 - E^\mu$}
\begin{equation}\label{e:weak_L1}
|B_{3/2} \setminus K|\leq C E^{-2\lambda} \int_{B_2} |Dv|^2 \leq C E^{1-2\lambda}\, .
\end{equation}
Classical Sobolev space theory (see for instance \cite{EG}) implies that $\Lip (v|_K) \leq C E^\lambda$. We must therefore improve 
upon \eqref{e:weak_L1} using the minimality of $\gr (v)$. 

First of all we let $w$ be a Lipschitz extension of $v$ to $B_{3/2}$ with 
\[
\Lip (w) \leq \sqrt{n}\, \Lip (v|_K) \leq C E^{\lambda}\, .
\] 
We fix next a parameter $\vartheta>0$ and 
define $z = w \ast \varphi_{E^\vartheta}$, where $\varphi$ is a standard smooth mollifier. Observe that
\[
\int_{B_{5/4}} |w-z|^2 \leq C E^{2\vartheta} \int_{B_{3/2}} |Dw|^2 \leq C E^{1+2\vartheta}\, ,
\]
whereas
\begin{align*}
\|w-z\|_{C^0} &\leq C E^\vartheta\\
\int_{B_{5/4}} |Dw|^2 &\leq CE\, . 
\end{align*}
Using Fubini's theorem we choose a radius $\sigma\in ]9/8, 5/4[$ with the
property that $\Vol^{m-1} (\partial B_\sigma \setminus K) \leq C E^{1-2\lambda}$ and such that
\begin{align}
&\int_{\partial B_\sigma} (|Dv|^2 +|Dw|^2 + |Dz|^2) \leq C E\\
&\int_{\partial B_\sigma} |w-z|^2 \leq C E^{1+2\vartheta}\, .
\end{align}We fix next a second parameter $\kappa$, which we use to define the radii
\begin{align*}
r_1 &= \sigma - E^{\kappa}\\
r_2 &= \sigma - 2 E^{\kappa}\, ,
\end{align*}
whereas we set $r_0 = \sigma + E^\vartheta$.
It is not difficult to see that we can additionally require
\begin{equation}\label{e:small_layer}
\int_{B_{r_0}\setminus B_\sigma} |Dv|^2 \leq C E^{1+\vartheta}\, .
\end{equation}
The idea is now to define a new function $v'$ such that
\begin{itemize}
\item $v' = v$ on $\partial B_\sigma$;
\item $v' (x) = w (\sigma \frac{x}{r_1})$ on $\partial B_{r_1}$;
\item $v' (x) = z (\sigma \frac{x}{r_2})$ on $B_{r_2}$.
\end{itemize}
In the annuli $B_\sigma\setminus B_{r_1}$ and $B_{r_1}\setminus B_{r_2}$ we wish to define the function $v'$ ``interpolating'' between the values on the corresponding spheres and keeping the Dirichlet energy under control. It is not difficult to see that this can be done with a linear interpolation along the radii so to have the following estimate on the annalus $B_\sigma \setminus B_{r_1}$
\[
\int_{B_\sigma \setminus B_{r_1}} |Dv'|^2 \leq C E^\kappa \int_{\partial B_\sigma} (|Dw|^2 + |Dv|^2) 
+ C E^{-\kappa} \int_{\partial B_\sigma} |w-v|^2\, 
\]
and an analogous one (left to the reader as an exercise) in the annulus $B_{r_1}\setminus B_{r_2}$.

Now, recall that $\{w\neq v\} \subset K$, $|\partial B_\sigma \setminus K| \leq C E^{1-2\lambda}$ and both functions have Lipschitz constant no larger than $2$. It is then easy to see that 
\[
\|w-v\|_{C^0 (B_\sigma)} \leq C E^{(1-2\lambda)/(m-1)}\, .
\] 
We thefore achieve
\[
\int_{B_\sigma \setminus B_{r_1}} |Dv'|^2 \leq C E^{1+\kappa} + C E^{(1-2\lambda)(1+ 2 (m-1)^{-1}) - \kappa}\, .
\]
In particular, by choosing $\lambda$ and then $\kappa$ sufficiently small we conclude that
\begin{equation}\label{e:one_annulus}
\int_{B_\sigma \setminus B_{r_1}} |Dv'|^2 \leq C E^{1+\kappa}\, .
\end{equation}
Similarly, we can estimate
\[
\int_{B_{r_1}\setminus B_{r_2}} |Dv'|^2 \leq C E^{1+\kappa} + C E^{1+2\vartheta -\kappa}\, .
\]
Assuming thus that $\kappa<\vartheta$ we actually achieve
\begin{equation}\label{e:two_annuli}
\int_{B_\sigma\setminus B_{r_2}} |Dv'|^2 \leq C E^{1+\kappa}\, .
\end{equation}
We then leave to the reader to check that the Lipschitz constant of $v'$ is bounded by a constant, thus implying that
\begin{equation}\label{e:sommando_1}
\Vol^m (\gr (v'), B_{\sigma}\setminus B_{r_2}) \leq |B_{\sigma} \setminus B_{r_2}| + C E^{1+\kappa}\, .
\end{equation}

Next observe that
\begin{align*}
\int_{B_{r_2}} |Dv'|^2 &\leq \int_{B_\sigma} |Dz|^2\, . 
\end{align*}
Now we set $\ell = E^\vartheta$ and we write 
\[
|Dz| = |D w*\varphi_\ell| \leq |Dw|*\varphi_\ell \leq (|Dw|\mathbf{1}_K)*\varphi_\ell| + (|Dw| \mathbf{1}_{K^c})*\varphi_\ell\, .
\]
Observe that $r_0 = \sigma +\ell$ and thus
\begin{equation}\label{e:un_pezzo}
\int_{B_\sigma} |(|Dw|\mathbf{1}_K)*\varphi_\ell|^2 \leq \int_{B_{r_0}\cap K} |Dv|^2
\leq \int_{B_\sigma\cap K} |Dv|^2 + C E^{1+\vartheta}\, .
\end{equation}
The remaining part is estimated via
\begin{align}
\int_{B_\sigma} |(|Dw| \mathbf{1}_{K^c})*\varphi_\ell^2 &
\leq C E^{2\lambda} \|\mathbf{1}_{K^c}*\varphi_\ell\|_{L^2}^2 
\leq C E^{2\lambda}\|\mathbf{1}_{K^c}\|_{L^1}^2\|\varphi_\ell\|_{L^2}^2\nonumber\\
&\leq C E^{2\lambda} E^{2 (1-2\lambda)} E^{- m \vartheta} \leq C E^{1+\kappa}\, , \label{e:un_altro_pezzo}
\end{align}
provided $\lambda$ and $\vartheta$ are suitably chosen. Combining the last two estimates we easily achieve
\[
\int_{B_\sigma} |Dz|^2 \leq \int_{B_\sigma \cap K} |Dw|^2 + C E^{1+\kappa}
= \int_{B_\sigma \cap K} |Dv|^2 + C E^{1+\kappa}\, .
\]
Consider now that, since $\Lip (v'|_{B_{r_2}}) \leq \Lip (z) \leq \Lip (w) \leq C E^\lambda$, 
a simple Taylor expansion of the area functional\footnote{Recall that the integrand is 
\[
\left(1+ |A|^2 + \sum_{|\alpha \geq 2} (\det M_{\alpha\beta} (A))^2\right)^{\frac{1}{2}}\, .
\]
We then just need to observe that $|\det M_{\alpha\beta} (A)|\leq |A|^{|\alpha|}$ and  that $\sqrt{1+t} = 1 + \frac{t}{2} + O (t^2)$.
}
gives 
\begin{align}
\Vol^m (\gr (v'), B_{r_2}) &\leq |B_{r_2}| + \frac{1}{2} \int_{B_{r_2}} |Dv'|^2 + C E^{1+\kappa}\nonumber\\
&\leq |B_{r_2}| + \frac{1}{2} \int_{B_\sigma\cap K} |Dv|^2 + C E^{1+\kappa}
\label{e:sommando_2}
\end{align}
Summing \eqref{e:sommando_1} and \eqref{e:sommando_2} we thus get
\[
\Vol^m (\gr (v'), B_\sigma) \leq |B_\sigma| + \frac{1}{2} \int_{B_\sigma\cap K} |Dv|^2 + C E^{1+\kappa}\, .
\]
In particular, the minimality of $v$ implies that
\begin{equation}\label{e:comparison}
\Vol^m (\gr (v), B_\sigma) - |B_\sigma| \leq \frac{1}{2} \int_{B_\sigma\cap K} |Dv|^2 + C E^{1+\kappa}\, .
\end{equation}
Next, write
\begin{align*}
\Vol^m (\gr (v), B_\sigma) &= \Vol^m (\gr (v), B_\sigma \cap K) + \Vol^m (\gr (v), B_\sigma \setminus K)\\
&\geq \Vol^m (\gr (v), B_\sigma \cap K) + |B_\sigma \setminus K|\, .
\end{align*}
Recall however that $|Dv|\leq C E^\lambda$ on $K$. Hence we can use again the Taylor expansion of the area integrand 
and conclude
\[
\Vol^m (\gr (v), B_\sigma \cap K) \geq |B_\sigma \cap K| + \frac{1}{2} \int_{B_{\sigma} \cap K} |Dv|^2 - C E^{1+\kappa}\, ,
\]
whereas on $B_\sigma \setminus K$ we use the crude estimate
\[
C^{-1} \int_{B_\sigma \setminus K} |Dv|^2 \leq \Vol^m (\gr (v), B_{\sigma \setminus K}) - |B_\sigma \setminus K|\, .
\]
Summarizing
\begin{equation}\label{e:comparison2}
\Vol^m (\gr (v), B_\sigma) - |B_\sigma| \geq C^{-1} \int_{B_\sigma \setminus K} |Dv|^2 + \frac{1}{2} \int_{B_{\sigma} \cap K} |Dv|^2 - C E^{1+\kappa}\, .
\end{equation}
Combining \eqref{e:comparison} and \eqref{e:comparison2}, we get 
\[
\int_{B_\sigma\setminus K} |Dv|^2 \leq C E^{1+\kappa}\, .
\]
$K$ was given as $K:= \{M|Dv|^2 \geq E^{2\lambda}\}\cap B_{3/2}$. We have thus achieved the following: there are constants $\kappa>0$ and $\lambda_0>0$ such that, if $\lambda \leq \lambda_0$, then the inequality
\[
\int_{B_{9/8} \cap \{M |Dv|^2 \geq E^{2\lambda}\}} |Dv|^2 \leq C E^{1+\kappa}\, 
\] 
holds, provided $E$ is sufficiently small.

Set now $\omega := \min\{ \frac{\kappa}{3}, \frac{\lambda_0}{2}\}$.
We define
\[
m |Dv|^2 (y) = \sup_{r<\frac{1}{8}} \frac{1}{r^m} \int_{B_r (y)} |Dv|^2\, .
\]
and $L := \{y\in B_1 : m |Dv|^2 \leq E^{2\omega}\}$. We next recall the more precise form of the weak $L^1$ estimates for maximal functions, that is
\[
|B_1\setminus L| \leq C E^{-2\omega} \int_{B_{9/8} \cap \{m |Dv|^2 \geq C^{-1} E^{2\omega}\}} |Dv|^2\, .
\]
where $C$ is a geometric constant.
It is easy to see that, for $E$ small, if $m|Dv|^2 \geq C^{-1} E^{2\omega}$, then $M |Dv|^2 \geq C^{-1} E^{2\omega}
\geq E^{2\lambda_0}$. Hence we conclude
\[
|B_1 \setminus L|\leq C E^{1+\kappa-2\omega} \leq C E^{1+\omega}\, .
\]
Since $\Lip (v|_L) \leq C E^\omega$, this completes the proof. 
\end{proof}

\section{De Giorgi's excess decay and the proof of Theorem \ref{t:de_giorgi}}

We now examine Theorem \ref{t:de_giorgi}. The key idea is that, under the assumptions of the theorem, the {\em spherical excess} decays geometrically at smaller scales: such decay is an effect of the almost harmonicity of $u$, which in turn is again a consequence the Taylor expansion of the area integrand, computed on the improved Lipschitz approximation. 

\subsection{Excess decay.} 

De Giorgi's excess decay is the following proposition (which by scaling and translating we could have equivalently
stated with $x=0$ and $r=1$). 

\begin{myprop}\label{p:excess_decay}
For every $0<\alpha<1$ there is a geometric constant $\varepsilon_1$ depending only on $\alpha$, $m$ and $n$ with the following property. Assume $u: B_r (x) \to \mathbb R^n$ is a Lipschitz map with $\Lip (u) \leq 1$ whose graph is area minimizing and let $p= (x, u(x))$. If 
\begin{equation}\label{e:excess_small_2}
\bE (\gr (u), \bB_r (p))  < \bE (\gr (u), \bB_r (p), \vec{\pi}_0) < \varepsilon_1\, .
\end{equation}
Then
\begin{equation}\label{e:excess_decay_2}
\bE (\gr (u), \bB_{r/2} (p)) \leq \frac{1}{2^{2\alpha}} \bE (\gr (u), \bB_r (p))\, .
\end{equation}
\end{myprop}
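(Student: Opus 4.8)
The plan is to run the classical \emph{harmonic approximation} scheme, recycling the cut‑and‑paste construction from the proof of Proposition~\ref{p:lip_approx}. After translating and rescaling assume $p=0$, $r=1$, $u(0)=0$. Pick a minimizing plane $\vec\pi_1$ for $\bB_1$, so $|\vec\pi_0-\vec\pi_1|=O(\varepsilon_1^{1/2})$ by Proposition~\ref{p:compare1}; after a rotation (Lemma~\ref{l:first_rotation}), which alters none of the quantities in the statement and keeps all Lipschitz constants below $\le2$, assume $\vec\pi_1=\vec\pi_0$, so that
\[
\mathbf e:=\bE(\gr(u),\bB_1)=\bE(\gr(u),\bB_1,\vec\pi_0)<\varepsilon_1\,.
\]
Apply Lemma~\ref{l:sp-to-cyl} with $\eta:=\mathbf e^{\mu}$, $\mu>0$ a small geometric exponent (legitimate once $\mathbf e$ is small enough): this yields a Lipschitz $v\colon B_{1-\mathbf e^\mu}\to\mathbb R^n$ with $\Lip(v)\le 2$ and area minimizing graph, $\gr(u)\cap\bC_{1-\mathbf e^\mu}=\gr(v)$ and $\bE(\gr(v),\bC_{1-\mathbf e^\mu})\le\frac{\omega_m}{2}\mathbf e$. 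Let $K$ and the $\mathbf e^\gamma$‑Lipschitz approximation $w$ (on $B_\rho$, $\rho=1-O(\mathbf e^{\min\{\mu,\gamma\}})$) be as in Proposition~\ref{p:lip_approx}: $\Lip(w)\le C\mathbf e^\gamma$, $\{v\ne w\}\subset B_\rho\setminus K$, $|B_\rho\setminus K|\le C\mathbf e^{1+\gamma}$. Since the area integrand in \eqref{e:area_formula} is its Dirichlet part plus a remainder $O(|Dv|^4)$, Proposition~\ref{p:cil-sfer} (applied to the nonnegative integrand restricted to $B_\rho$) together with the smallness of $|Dv|$ on $K$ give $\int_{B_\rho}|Dv|^2\le(1+o(1))\,\omega_m\mathbf e$ and $\int_{B_\rho}|Dw|^2\le\int_{B_\rho}|Dv|^2+o(\mathbf e)$. (Here and below $o(\cdot)$ refers to $\mathbf e\to0$; in fact every such error is $\le C\mathbf e^{1+\kappa}$ for a geometric $\kappa>0$.)

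Next I compare $v$ with a harmonic map. By Fubini, fix a good radius $t\in(\rho-\mathbf e^\mu,\rho)$ with $\int_{\partial B_t}(|Dv|^2+|Dw|^2)\le C\mathbf e$, $\Vol^{m-1}(\partial B_t\setminus K)\le C\mathbf e^{1-2\lambda}$ and $w|_{\partial B_t}\in W^{1,2}$, and let $h\colon B_t\to\mathbb R^n$ be the harmonic extension of $w|_{\partial B_t}$. Then $\int_{B_t}|Dh|^2\le\int_{B_t}|Dw|^2\le(1+o(1))\omega_m\mathbf e$ and $\Lip(h)\le C\mathbf e^\gamma$. I build a competitor $v'$ for $\gr(v)$ exactly as in the proof of Proposition~\ref{p:lip_approx}, with $h$ in the role of the mollification: $v'=v$ on $B_\rho\setminus B_t$; on a thin annulus $B_t\setminus B_{t_1}$, $t_1=t-\mathbf e^{\kappa}$, interpolate radially between $v|_{\partial B_t}$ and the dilation of $w|_{\partial B_t}$; on $B_{t_1}\setminus B_{t_2}$, $t_2=t-2\mathbf e^\kappa$, keep the latter (harmless since $w=h$ on $\partial B_t$); and $v'(x)=h(tx/t_2)$ on $B_{t_2}$. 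The first annulus is controlled by \eqref{e:one_annulus}, the second trivially, so $\Lip(v')\le C$, $\int_{B_t\setminus B_{t_2}}|Dv'|^2\le C\mathbf e^{1+\kappa}$ and $\int_{B_{t_2}}|Dv'|^2\le\int_{B_t}|Dh|^2$. Taylor‑expanding the area integrand where the slope is small and using minimality $\Vol^m(\gr(v),B_t)\le\Vol^m(\gr(v'),B_t)$ gives $\int_{B_t}|Dv|^2\le\int_{B_t}|Dh|^2+o(\mathbf e)$. As $h$ minimizes the Dirichlet energy with boundary datum $w|_{\partial B_t}$, $\int_{B_t}|Dw-Dh|^2=\int_{B_t}|Dw|^2-\int_{B_t}|Dh|^2\le\int_{B_t}|Dv|^2+o(\mathbf e)-\int_{B_t}|Dh|^2\le o(\mathbf e)$, and combined with $\int_{B_t}|Dv-Dw|^2\le C|B_\rho\setminus K|=o(\mathbf e)$ this yields
\[
\int_{B_t}|D(v-h)|^2=o(\mathbf e)\qquad\text{(in fact }\le C\mathbf e^{1+\kappa}\text{)}\,.
\]

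Finally I transfer the decay of the harmonic map. Put $A_0:=Dh(0)$; by the mean value property $A_0=(Dh)_{B_s}$ for all $s<t$, and $|A_0|^2\le|B_t|^{-1}\int_{B_t}|Dh|^2=O(\mathbf e)$. Let $\vec\pi_*$ orient the graph of the linear map $x\mapsto A_0x$. Since $\gr(u)\cap\bB_{1/2}\subset\gr(v,B_{1/2})$ and $|\vec T_{(x,v(x))}\gr(v)-\vec\pi_*|^2=|Dv(x)-A_0|^2\bigl(1+O(|Dv(x)|+|A_0|)\bigr)$, the Lipschitz approximation ($|Dv|\le C\mathbf e^\gamma$ on $K$, $|B_{1/2}\setminus K|\le C\mathbf e^{1+\gamma}$) gives
\[
2^{-m}\omega_m\,\bE(\gr(u),\bB_{1/2})\le\int_{\gr(v,B_{1/2})}|\vec T-\vec\pi_*|^2\,d\Vol^m\le(1+o(1))\int_{B_{1/2}}|Dv-A_0|^2+o(\mathbf e)\,.
\]
Expanding $|Dv-A_0|^2=|Dh-A_0|^2+2\langle Dv-Dh,\,Dh-A_0\rangle+|Dv-Dh|^2$, integrating, and using Cauchy--Schwarz with $\int_{B_{1/2}}|Dv-Dh|^2\le\int_{B_t}|D(v-h)|^2\le C\mathbf e^{1+\kappa}$ and $\int_{B_{1/2}}|Dh-A_0|^2\le C\mathbf e$, one gets $\int_{B_{1/2}}|Dv-A_0|^2\le\int_{B_{1/2}}|Dh-A_0|^2+o(\mathbf e)$. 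The one genuinely special input is the \emph{dimension‑free} decay for harmonic maps: decomposing each scalar component of $Dh$ into homogeneous harmonic polynomials, which are $L^2$‑orthogonal on every concentric ball, yields $\int_{B_s}|Dh-(Dh)_{B_s}|^2\le(s/t)^{m+2}\int_{B_t}|Dh-(Dh)_{B_t}|^2$ for $s\le t$; with $s=\tfrac12$, $t=1-o(1)$ and $\int_{B_t}|Dh-(Dh)_{B_t}|^2\le\int_{B_t}|Dh|^2\le(1+o(1))\omega_m\mathbf e$, this gives $\int_{B_{1/2}}|Dh-A_0|^2\le 2^{-(m+2)}(1+o(1))\,\omega_m\mathbf e$. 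Chaining the three displays,
\[
\bE(\gr(u),\bB_{1/2})\le\big(2^{-2}+o(1)\big)\,\bE(\gr(u),\bB_1)\,,
\]
with $o(1)\to0$ as $\varepsilon_1\to0$. Since $2^{-2}<2^{-2\alpha}$ for every $\alpha<1$, choosing $\varepsilon_1=\varepsilon_1(\alpha,m,n)$ so small that the error is $\le(2^{-2\alpha}-2^{-2})\,\bE(\gr(u),\bB_1)$ completes the proof.

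The heart of the matter is the harmonic‑comparison estimate $\int_{B_t}|D(v-h)|^2\le C\mathbf e^{1+\kappa}$, and here the \emph{improved} Lipschitz approximation is indispensable: the competitor leaks Dirichlet energy only of order $\mathbf e^{1+\kappa}$ on the transition annuli precisely because $|B_\rho\setminus K|\le r^m\mathbf e^{1+\gamma}$ (not merely the Chebyshev bound $\mathbf e^{1-2\lambda}$) and because, where the slope is small, the area integrand differs from its Dirichlet part by a quartic remainder. A second, more bookkeeping‑type difficulty is that every multiplicative constant in the last paragraph must be kept at $1+o(1)$: this is why $\eta$ in Lemma~\ref{l:sp-to-cyl} must be taken as small as $\mathbf e^\mu$, so that $h$ is defined on essentially all of $B_1$ and the harmonic decay factor is $2^{-(m+2)}(1+o(1))$ rather than $2^{-(m+2)}$ times a fixed power $>1$ of an inner radius (which would ruin the comparison with $2^{-2\alpha}$ when $\alpha$ is close to $1$), and why one passes from $\bE(\gr(u),\bB_{1/2})$ to $\int_{B_{1/2}}|Dv-A_0|^2$ only after exploiting that $|Dv|$ is tiny off a set of measure $O(\mathbf e^{1+\gamma})$.
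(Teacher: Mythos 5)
Your argument is correct in its overall architecture and proves the right statement, but it replaces the paper's key intermediate step by a different one. The paper reduces to a cylindrical decay statement (Proposition \ref{p:excess_decay_3}) and then obtains the harmonic comparison function by a \emph{compactness} argument: Proposition \ref{p:harm_bu} shows, by contradiction and a diagonal sequence, that the normalized Lipschitz approximations converge \emph{strongly} in $W^{1,2}_{loc}$ to a harmonic limit, and the decay is read off that limit. You instead run a \emph{direct, quantitative} harmonic approximation: you take $h$ to be the harmonic extension of $w|_{\partial B_t}$ on a good slice, use the same cut-and-paste competitor to get $\int_{B_t}|D(v-h)|^2\leq C\mathbf{e}^{1+\kappa}$, and then transfer the sharp decay $\int_{B_s}|Dh-(Dh)_{B_s}|^2\leq (s/t)^{m+2}\int_{B_t}|Dh-(Dh)_{B_t}|^2$ (your spherical-harmonics justification of the constant $1$ is correct, and that sharp constant is genuinely needed — the paper's Lemma \ref{l:harmonic} as stated, with an unspecified $C$, would not suffice). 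What your route buys is an explicit rate ($o(\mathbf{e})$ is really $C\mathbf{e}^{1+\kappa}$) and no contradiction argument; what it costs is that every constant in the chain must be tracked as $1+o(1)$, which you do. The bookkeeping (Taylor expansion of the area integrand with quartic error, the transfer $|\vec{T}-\vec{\pi}_*|^2=|Dv-A_0|^2(1+o(1))$ on $K$ plus $O(|K^c|)$ off $K$, the normalizations between spherical and cylindrical excess) all checks out.

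There is one genuine inaccuracy you must repair: the claim $\Lip(h)\leq C\mathbf{e}^\gamma$ for the harmonic extension $h$ of the Lipschitz datum $w|_{\partial B_t}$ is false in general. The tangential derivatives of $h$ are controlled, but the normal derivative at the boundary is essentially a singular integral of the tangential gradient of the datum, which maps $L^\infty$ only into $BMO$; already on the disk one can write down Lipschitz boundary data whose harmonic extension is not Lipschitz. This is exactly the obstruction the paper flags in the proof of Proposition \ref{p:harm_bu} (``we do not know that $\Lip(g_k)\to 0$'') and circumvents with a diagonal sequence of Lipschitz functions converging to the limit in $W^{1,2}$. You use the Lipschitz bound on $h$ twice — to bound $\Lip(v')$ and to Taylor-expand the area of $\gr(h(t\cdot/t_2))$ — and both uses need a fix. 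The cheapest one in your quantitative setting: the interior gradient estimate $|\nabla h(x)|\leq C\mathbf{e}^\gamma\bigl(1+\bigl|\log \dist(x,\partial B_t)\bigr|\bigr)$ gives $\|\nabla h\|_{C^0(B_{t_2})}\leq C\mathbf{e}^\gamma\log(1/\mathbf{e})\leq C\mathbf{e}^{\gamma/2}$ since $t-t_2=2\mathbf{e}^\kappa$, which is all you need; alternatively, replace $h$ on $B_{t_2}$ by a Lipschitz function with the same boundary values and Dirichlet energy within $o(\mathbf{e})$ of the minimum, as the paper does. With that repair the proof is complete.
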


The next idea is that Proposition \ref{p:excess_decay} can be iterated on ``dyadic radii'' and combined with \eqref{e:obvious} to conclude

\begin{mycor}\label{c:excess_decay}
For every $0<\alpha<1$ there is a geometric constant $\varepsilon_2$ depending only on $\alpha$, $m$ and $n$ with the following property. Assume $u: B_r (x) \to \mathbb R^n$ is as in Proposition \ref{p:excess_decay} with $\varepsilon_2$ substituting $\varepsilon_1$. Then
\begin{equation}\label{e:excess_decay}
\bE (\gr (u), \bB_{\rho} (q)) \leq 2^{2m+2\alpha} \left(\frac{\rho}{r}\right)^{2\alpha} \bE (\gr (u), \bB_r (p))\qquad \forall \bB_\rho (q) \subset \bB_{r/2} (p)\, .
\end{equation}
\end{mycor}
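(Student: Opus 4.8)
The plan is to deduce Corollary \ref{c:excess_decay} from Proposition \ref{p:excess_decay} by a dyadic iteration, combined with the elementary scaling inequality \eqref{e:obvious}. First I would check that the hypothesis \eqref{e:excess_small_2} is \emph{self-improving} along the sequence of balls $\bB_{r/2^k}(p)$. By \eqref{e:excess_decay_2} the minimal excess $\bE(\gr(u),\bB_{r/2^k}(p))$ decays by a factor $2^{-2\alpha}<1$ at each step, so it stays below $\varepsilon_1$. The subtlety is that Proposition \ref{p:excess_decay} requires the \emph{horizontal} excess $\bE(\gr(u),\bB_{r/2^k}(p),\vec\pi_0)$ to be small, not merely the minimal one. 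To control this I would use \eqref{e:r-2r} from Proposition \ref{p:compare1}: writing $\vec\pi_k$ for an optimal plane at scale $r/2^k$, the tilting estimate gives $|\vec\pi_k-\vec\pi_0|^2 \leq C\sum_{j<k}|\vec\pi_{j+1}-\vec\pi_j|^2 \leq C\sum_{j\geq 0} 2^{-2\alpha j}\,\bE(\gr(u),\bB_{r/2^j}(p))$, a geometric series bounded by $C\bE(\gr(u),\bB_r(p),\vec\pi_0)$; hence $\bE(\gr(u),\bB_{r/2^k}(p),\vec\pi_0)\leq C|\vec\pi_k-\vec\pi_0|^2 \omega_m^{-1}\Vol^m(\gr(u)\cap\bB_{r/2^k}(p))\cdot(r/2^k)^{-m}+2\bE(\gr(u),\bB_{r/2^k}(p)) \leq C\varepsilon_2$, which is $<\varepsilon_1$ once $\varepsilon_2$ is chosen small enough. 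This is the step I expect to be the main obstacle: making sure the tilting of the optimal planes does not accumulate faster than the excess decays, so that the horizontal smallness hypothesis of Proposition \ref{p:excess_decay} is preserved at every dyadic scale.

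Once the iteration is justified, I would conclude by induction that
\[
\bE(\gr(u),\bB_{r/2^k}(p)) \leq 2^{-2\alpha k}\,\bE(\gr(u),\bB_r(p)) \qquad \text{for all } k\geq 0\, .
\]
Then, for an arbitrary ball $\bB_\rho(q)\subset \bB_{r/2}(p)$, I would pick the integer $k\geq 1$ with $r/2^{k+1} < \rho \leq r/2^k$, so that $\bB_\rho(q)\subset \bB_{r/2^k}(p)$ (using $q\in\bB_{r/2}(p)$ and $\rho\leq r/2^k \leq r/2$, a crude containment suffices since $|q-p|<r/2$ and $\rho\leq r/2^k$ forces $\bB_\rho(q)\subset\bB_{r/2^{k-1}}(p)$; I will use this slightly larger ball to be safe). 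Applying \eqref{e:obvious} with $\tau=r/2^{k-1}$ and $\sigma=\rho$ together with the choice of an optimal plane at scale $r/2^{k-1}$ gives
\[
\bE(\gr(u),\bB_\rho(q)) \leq \Big(\frac{r/2^{k-1}}{\rho}\Big)^m \bE(\gr(u),\bB_{r/2^{k-1}}(p)) \leq \Big(\frac{r/2^{k-1}}{\rho}\Big)^m 2^{-2\alpha(k-1)}\bE(\gr(u),\bB_r(p))\, .
\]

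Finally I would bookkeep the constants: since $\rho > r/2^{k+1}$ we have $r/2^{k-1} = 4\cdot r/2^{k+1} < 4\rho$, so $(r/2^{k-1}/\rho)^m < 4^m = 2^{2m}$; and since $\rho\leq r/2^k$ we have $2^{-k}\geq \rho/r$, hence $2^{-2\alpha(k-1)} = 2^{2\alpha}\,2^{-2\alpha k} \leq 2^{2\alpha}(\rho/r)^{2\alpha}$. Multiplying the two bounds yields
\[
\bE(\gr(u),\bB_\rho(q)) \leq 2^{2m}\,2^{2\alpha}\,\Big(\frac{\rho}{r}\Big)^{2\alpha}\,\bE(\gr(u),\bB_r(p))\, ,
\]
which is exactly \eqref{e:excess_decay}. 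The only care needed is to choose $\varepsilon_2 = \varepsilon_2(\alpha,m,n)$ small enough (in terms of $\varepsilon_1$ and the geometric constants $C_1, C_2$ of Proposition \ref{p:compare1}) so that the accumulated tilt keeps \eqref{e:excess_small_2} valid at every scale; everything else is the routine dyadic summation and constant-chasing sketched above.
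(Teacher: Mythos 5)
Your dyadic iteration and the control of the accumulated tilt via Proposition \ref{p:compare1} are exactly the paper's mechanism, and that part is sound (modulo one slip: $|\vec\pi_k-\vec\pi_0|^2\leq C\sum_{j<k}|\vec\pi_{j+1}-\vec\pi_j|^2$ is false as written — Cauchy--Schwarz costs a factor $k$ — but the conclusion survives because you should sum the first powers, $|\vec\pi_k-\vec\pi_0|\leq\sum_j|\vec\pi_{j+1}-\vec\pi_j|\leq C\sum_j 2^{-\alpha j}\bE(\gr(u),\bB_r(p),\vec\pi_0)^{1/2}$, which is a convergent geometric series, and then square).

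The genuine gap is in the last step. You iterate the decay on the balls $\bB_{r/2^k}(p)$ \emph{concentric at $p$} and then claim that $|q-p|<r/2$ and $\rho\leq r/2^k$ force $\bB_\rho(q)\subset\bB_{r/2^{k-1}}(p)$. This is false whenever $q$ is not close to $p$ relative to $\rho$: take $|q-p|=0.4\,r$ and $\rho=10^{-3}r$, so $k\approx 10$ and $\bB_{r/2^{k-1}}(p)$ has radius about $0.004\,r$ and does not come anywhere near $q$. Information about concentric balls at $p$ simply cannot control the excess in a small ball centered at a distant point $q$. The fix — which is what the paper does — is to re-center the whole iteration at $q$: first use \eqref{e:obvious} to pass from $\bB_r(p)$ to $\bB_{r/2}(q)\subset\bB_r(p)$ at the cost of a factor $2^m$ (this is where one of the two factors $2^m$ in the constant $2^{2m+2\alpha}$ comes from), check that the smallness hypothesis \eqref{e:excess_small_2} still holds at the new center provided $\varepsilon_2\leq 2^{-m}\varepsilon_1$ up to constants, and only then run your dyadic decay on the concentric balls $\bB_{r/2^{k+1}}(q)$; the final comparison $\bB_\rho(q)\subset\bB_{r/2^{k+1}}(q)$ is then automatic. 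A second, smaller slip in the bookkeeping: from $\rho\leq r/2^k$ you get $2^{-2\alpha k}\geq(\rho/r)^{2\alpha}$, which is the wrong direction; the upper bound $2^{-2\alpha k}\leq 2^{2\alpha}(\rho/r)^{2\alpha}$ comes from the other half of the sandwich, $\rho>r/2^{k+1}$.
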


\begin{proof}[Proof of Corollary \ref{c:excess_decay}] By scaling and translating we can assume that $q=(0,0)$ and $r=2$. If $\varepsilon_2 < \varepsilon_1$, we then have
\[
\bE(\gr (u), \bB_{1/2} ) \leq 2^{-2\alpha} \bE (\gr (u), \bB_1) < 2^{-2\alpha} 2^m \bE (\gr (u), \bB_2 (p)) < 2^{m-2\alpha} \varepsilon_2\, .
\]
Next let $\pi_2$ and $\pi_1$ be such that 
\begin{align*}
&\bE (\gr (u), \bB_{1/2}, \vec{\pi}_2) = \bE (\gr (u), \bB_{1/2})\\
&\bE (\gr (u), \bB_1, \vec{\pi}_1) = \bE (\gr (u), \bB_1)\, .
\end{align*}
Applying Proposition \ref{p:compare1} we easily get 
\[
|\vec{\pi}_2-\vec{\pi_1}|^2 \leq \bar C \bE (\gr (u), \bB_1) \, 
\]
and
\[
|\vec{\pi}_1 - \vec{\pi}_0|^2\leq \bar C \bE (\gr (u), \bB_1, \vec{\pi}_0) \, ,
\]
where $\bar C$ is a geometric constant. In particular, choosing $\varepsilon_2$ much smaller than $\varepsilon_1$ we can use Proposition
\ref{p:compare1} to show that
\[
\bE (\gr (u), \bB_{1/2}, \vec{\pi}_0) \leq \bar C |\vec{\pi}_2-\vec{\pi}_0|^2 + 2 \bE (\gr (u), \bB_{1/2}) < \varepsilon_1\, 
\]
and we can apply once again Proposition \ref{p:excess_decay} to estimate
\[
\bE (\gr (u), \bB_{1/4}) \leq 2^{-2\alpha} \bE (\gr (u), \bB_{1/2}) \leq 2^{-4\alpha} \bE (\gr (u), \bB_1)\, .
\]
Assume now inductively that you are in the position of applying Proposition \ref{p:excess_decay} on all radii $r = 2^{-j}$ for $j=0, \ldots, k$. 
We then get
\[
\bE (\gr (u), \bB_{2^{-k}})\leq 2^{-2k\alpha} \bE (\gr (u), \bB_1)
\]
and, if $\bE (\gr (u), \bB_{2^{-k}}, \vec{\pi}_{k+1}) = \bE (\gr (u), \bB_{2^{-k}})$, we conclude
\[
|\vec{\pi}_{k+1} - \vec{\pi}_1|\leq \sum_{j=1}^k |\vec{\pi}_{k+1} - \vec{\pi}_1|
\leq \bar{C}^{\frac{1}{2}} \sum_{j=1}^\infty 2^{-2(j-1)\alpha} \bE (\gr (u) \bB_1)^{\frac{1}{2}}
\leq C(\alpha) \varepsilon_2^{\frac{1}{2}}\, .
\]
Hence, if $\varepsilon_2$ is sufficiently small compared to $\varepsilon_1$ (by a factor which depends on $\alpha$ {\em but not} on $k$),
we can argue as above and conclude that $\bE (\gr (u), \bB_{2^{-k}}, \vec{\pi}_0) < \varepsilon_1$. We are thus in the position of
applying Proposition \ref{p:excess_decay} even with $r=2^{-k}$. 

This proves inductively that $\bE (\gr (u), \bB_{2^{-k}}) \leq 2^{-2k\alpha} \bE (\gr (u), \bB_1)$. Observe now that, given any $\rho < 1$,
if we let $k = \lfloor - \log_2 \rho \rfloor$, then $2^{-k-1} \leq \rho \leq 2^{-k}$ and thus
\begin{align*}
\bE (\gr (u), \bB_\rho) &\leq 2^m \bE (\gr (u), \bB_{2^{-k}}) \leq 2^m 2^{-2k \alpha} \bE (\gr (u), \bB_1)\\
&\leq 2^{2m+2\alpha} \rho^{2\alpha} \bE (\gr (u), \bB_2 (p))\, .
\end{align*}
\end{proof}

\subsection{Proof of Theorem \ref{t:de_giorgi}} We now see how Corollary \ref{c:excess_decay} leads
quickly to Theorem \ref{t:de_giorgi}. 
First, we exploit the graphicality to compare the spherical excess to the square mean oscillation of $Du$. In the rest of the note we will
use the notation $(Du)_{x,\rho}$ to denote the average
\[
(Du)_{x,\rho} = \frac{1}{\omega_m \rho^m} \int_{B_\rho (x)} Du (y)\, dy\, . 
\]

\begin{myprop}\label{p:compare2}
Let $u: \Omega \to \mathbb R^n$ be a map with $\Lip (u) \leq 1$ and let $p = (x,u(x))$. There is a geometric constant $C>1$ such that,
if $B_{Cr} (x) \subset \Omega$, then
\begin{equation}\label{e:est}
\int_{B_r (x)} |Du(y) - (Du)_{x,r}|^2 dy \leq C r^m \bE (\gr (u), \bB_{4r} (p))\, .
\end{equation}
\end{myprop}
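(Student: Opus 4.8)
The statement compares an intrinsic quantity — the mean oscillation of $Du$ over a disk in the domain — to the spherical excess over a ball in $\mathbb R^{m+n}$. The natural strategy is to first pass from the spherical excess to the cylindrical excess, which is an honest integral over the domain, and then compare that to the $L^2$ oscillation of $Du$ using the pointwise relation between $|\vec T_p\gr(u)-\vec\pi|$ and $|Du(x)-A|$ for a linear map $A$ orienting $\pi$. Concretely, I would fix the optimal plane $\pi$ for $\bE(\gr(u),\bB_{4r}(p))$, say $\bE(\gr(u),\bB_{4r}(p),\vec\pi)=\bE(\gr(u),\bB_{4r}(p))$. Because $\Lip(u)\le 1$, Proposition~\ref{p:compare1} (estimate \eqref{e:r-2r}, or a direct argument comparing $\vec\pi_0$ and $\vec\pi$) forces $|\vec\pi-\vec\pi_0|$ to be controlled by the excess, and in particular $\pi$ is still a ``horizontal'' plane: it is the graph of a linear map $A:\mathbb R^m\to\mathbb R^n$ with $|A|\le C$. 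The key elementary fact is then that for a point $x$ of differentiability,
\[
c^{-1}\,|Du(x)-A|^2 \le |\vec T_{(x,u(x))}\gr(u)-\vec\pi|^2 \le c\,|Du(x)-A|^2
\]
with $c$ geometric, valid uniformly while $\Lip(u)\le 1$ and $|A|\le C$; this is the same Taylor-type comparison between the Dirichlet integrand and the area integrand used already in the proof of Proposition~\ref{p:lip_approx}.

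Granting this, I would argue as follows. Since $\bB_{2r}(p)\subset\bC_{2r}(x)$ is false in the wrong direction, I instead use that the tilted cylinder of radius, say, $r$ over $\pi$ is contained in $\bB_{4r}(p)$ once $|\vec\pi-\vec\pi_0|$ is small (this is exactly the geometry behind Lemma~\ref{l:sp-to-cyl}, applied at scale $4r$ with a fixed small $\eta$). Hence
\[
\int_{B_r(x)} |Du(y)-A|^2\,dy \;\le\; C\int_{\gr(u,B_r(x))} |\vec T_q\gr(u)-\vec\pi|^2\,d\Vol^m(q) \;\le\; C\,\omega_m (4r)^m\,\bE(\gr(u),\bB_{4r}(p))\,,
\]
where in the first inequality I used the pointwise comparison together with the area formula (the Jacobian being comparable to $1$), and in the second the definition of the spherical excess and the inclusion of domains. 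Finally, since $(Du)_{x,r}$ is the $L^2$-projection of $Du$ onto constants on $B_r(x)$, it minimizes $\int_{B_r(x)}|Du-B|^2$ over constant matrices $B$, so replacing $A$ by $(Du)_{x,r}$ only decreases the left-hand side:
\[
\int_{B_r(x)} |Du(y)-(Du)_{x,r}|^2\,dy \;\le\; \int_{B_r(x)} |Du(y)-A|^2\,dy \;\le\; C r^m\,\bE(\gr(u),\bB_{4r}(p))\,.
\]
The constant $C$ and the relation $B_{Cr}(x)\subset\Omega$ enter only to guarantee that the tilted cylinder over $B_r(x)\cap\pi$ fits inside $\bB_{4r}(p)$ and that $\gr(u)$ is graphical over $\pi$ there; the factor $4$ in $\bB_{4r}(p)$ is precisely the slack absorbing the small tilt.

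**Main obstacle.** The only genuinely nontrivial point is the geometric bookkeeping in the tilt: I must be sure that, after passing to the optimal plane $\pi$, the portion of $\gr(u)$ lying over the disk $B_r(x)$ (in the original horizontal coordinates) is comparable — as a set and in measure — to a portion of $\gr(u)$ lying inside $\bB_{4r}(p)$ and projecting nicely onto $\pi$. This requires $|\vec\pi-\vec\pi_0|\le C\bE^{1/2}\le C\varepsilon^{1/2}$ to be small, which is where the hypothesis $B_{Cr}(x)\subset\Omega$ (so that the excess at scale $4r$ is defined) and the implicit smallness of the excess are used; for large excess the statement is vacuous since the right-hand side dominates the trivial bound $\int_{B_r(x)}|Du|^2\le C r^m\le C'r^m\bE$. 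All the remaining steps — the pointwise integrand comparison, the area formula with Jacobian near $1$, and the minimality of the average — are the routine ingredients already invoked earlier in the notes.
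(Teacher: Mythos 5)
Your argument is correct and follows essentially the same route as the notes: fix the optimal plane $\vec\pi$ for $\bB_{4r}(p)$, observe that it is the graph of a linear map $A$ with $|A|\leq C$, use the pointwise comparison $|Du(y)-A|\leq C\,|\vec T_{(y,u(y))}\gr (u)-\vec\pi|$ together with the area formula and the inclusion $\gr (u, B_r(x))\subset \bB_{4r}(p)$, and conclude by the $L^2$-minimality of the mean $(Du)_{x,r}$. The only simplification worth noting is that the inclusion $\gr (u, B_r(x))\subset \bB_{4r}(p)$ follows directly from $\Lip (u)\leq 1$, so the detour through tilted cylinders and Lemma~\ref{l:sp-to-cyl} is unnecessary here.
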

\begin{proof}
First of all observe that, by the Lipschitz bound on $u$, $\gr (u, B_r (x)) \subset \bB_{4r} (p)$. Next, let $\bE (\gr (u), \bB_{4r} (p), \vec\pi) 
= \bE (\gr (u), \bB_{4r} (p))$. Observe that, again by the Lipschitz bound, the plane oriented by $\vec{\pi}$ is the graph of a linear map
\[
\mathbb R^m \ni x \mapsto A x \in \mathbb R^n
\] 
with $|A|\leq C_0$ for some geometric constant $C_0$. An elementary geometric computation then gives
that $|A - Du (y)|\leq C |\vec{\pi} - \vec{T}_{(y, u(y))} \gr (u)|$. Using again the Lipschitz bound and the area formula we conclude therefore
\begin{align*}
\int_{B_r (x)} |A - Du(y)|^2\, dy &\leq C \int_{\bB_{4r} (p)\cap \gr (u)} |\vec{\pi} - \vec{T}_q \gr (u)|^2 d\Vol^m (q)\\
&\leq C r^m \bE (\gr (u), \bB_{4r} (p))\, .
\end{align*}
To achieve \eqref{e:est} recall then that
\[
\int_{B_r (x)} |Du(y) - (Du)_{x,r}|^2 = \min_A \int_{B_r (x)} |A - Du(y)|^2\, .
\]
\end{proof}

\begin{proof}[Proof of Theorem \ref{t:de_giorgi}]
From Proposition \ref{p:cil-sfer} and \eqref{e:obvious} we easily infer that
\[
\bE (\gr (u), \bB_{1/2} (p)) \leq 2^m \varepsilon_0 \qquad \mbox{for all $p =(x, u(x))$ with $x\in B_{1/2}$.}
\]
If $2^m \varepsilon_0$ is smaller than the threshold $\varepsilon_2$ in Corollary \ref{c:excess_decay} we can then combine it
with Proposition \ref{p:compare2} to conclude
\begin{equation}\label{e:morrey}
\int_{B_r (x)} |Du(y) - (Du)_{x,r}|^2 dy \leq C r^{m+2\alpha} E \qquad \forall x\in B_{1/2}, \forall r<\frac{1}{8}\, .
\end{equation}
It is a well-known lemma, due to Morrey, that \eqref{e:morrey} implies $\|Du\|_{C^\alpha}\leq C E^{\frac{1}{2}}$. We briefly sketch the proof.
First observe that, for $r< \frac{1}{16}$, 
\begin{align*}
& |(Du)_{x,r} - (Du)_{x,2r}|^2\\ 
\leq\; & C r^{-m} \left(\int_{B_r (x)} |Du(y) - (Du)_{x,r}|^2 + \int_{B_r (x)} |Du(y) - (Du)_{x,2r}|^2\right)\\
\leq\; &  C r^{-m} \left(\int_{B_r (x)} |Du(y) - (Du)_{x,r}|^2 + \int_{B_{2r} (x)} |Du(y) - (Du)_{x,2r}|^2\right) \leq C r^{2\alpha} E\, .
\end{align*}
In particular iterating on dyadic radii we easily get the estimate
\[
|(Du)_{x, 2^{-k}} - (Du)_{x, 2^{-j}}| \leq C E^{\frac{1}{2}} \sum_{j \leq i \leq k-1} 2^{-i\alpha} E \leq C 2^{-j \alpha} E^{\frac{1}{2}}\, \qquad \forall k\geq j \geq 3\, ,
\]
from which in turn we infer
\[
|(Du)_{x,r} - Du_{x,\rho}|\leq C \rho^{\alpha} E^{\frac{1}{2}} \qquad \forall r \leq \rho \leq \frac{1}{8}\, .
\]
If $x$ is a Lebesgue point for $Du$ we then conclude
\begin{equation}\label{e:Lebesgue}
|Du (x) - Du_{x, \rho}|\leq C \rho^{\alpha} E^{\frac{1}{2}} \qquad \forall \rho \leq \frac{1}{8}\, . 
\end{equation}
Fix now two points $x,y\in B_{1/2} (0)$ with $2 \rho:= |x-y|\leq \frac{1}{8}$ and let $z=\frac{x+y}{2}$. Observe that $B_\rho (z) \subset B_{2\rho} (x) \cap B_{2\rho} (y)$ to infer
\begin{align*}
& |(Du)_{x, 2\rho} - (Du)_{y, 2\rho}|^2\\
\leq\; & C \rho^{-m} \left(\int_{B_\rho (z)} |Du - (Du)_{x,2\rho}|^2 + \int_{B_\rho (z)} |Du - (Du)_{y, 2\rho}|^2\right)\\
\leq\; & C \rho^{-m} \left(\int_{B_{2\rho} (x)} |Du - (Du)_{x,2\rho}|^2 + \int_{B_{2\rho} (y)} |Du - (Du)_{y, 2\rho}|^2\right)
\leq C \rho^{2\alpha} E \, .
\end{align*}
Combining the latter estimate with \eqref{e:Lebesgue} we conclude
\[
|Du (x)- Du(y)|\leq C |x-y|^\alpha E^{\frac{1}{2}}
\]
whenever $x, y\in B_{1/2} (0)$ are Lebesgue points for $Du$ with $|x-y|\leq \frac{1}{8}$. The conclusion of the theorem follows then from
simple calculus considerations. 
\end{proof}

\subsection{Proof of the excess decay: harmonic blow-up.} In the rest of the chapter we focus on the proof of  Proposition \ref{p:excess_decay}. The considerations of Section \ref{ss:sf-to-cil} reduce it to the following decay of the ``cylindrical excess''. The simple details of such reduction are left to the reader.

\begin{myprop}\label{p:excess_decay_3}
For every $0< \alpha <1$ there are $\varepsilon_3>0$ and $\eta>0$ with the following property.
Let $v: B_{1-\eta} (0) \to \mathbb R^n$ be a Lipschitz function with $\Lip (v) \leq 2$ whose graph is area minimizing and such that
\[
\bE (\gr (v), \bC_{1-\eta}) < \varepsilon_3\, .
\]
Then there is a unit $m$-vector $\vec{\pi}$ such that
\begin{equation}\label{e:excess_decay_cil}
\bE (\gr (v), \bC_{1/2}, \vec{\pi}) \leq 2^{-m-2\alpha} \bE (\gr (v), \bC_{1-\eta})\, .
\end{equation}
\end{myprop}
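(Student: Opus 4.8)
The plan is the classical De Giorgi blow-up (or "tilt-excess compactness") argument. Suppose the conclusion fails: then there is a fixed $\alpha$ and a sequence $v_k : B_{1-\eta_k}(0) \to \mathbb R^n$ of area minimizing Lipschitz graphs with $\Lip(v_k)\le 2$, with $E_k := \bE(\gr(v_k),\bC_{1-\eta_k}) \to 0$, but for which \eqref{e:excess_decay_cil} fails for \emph{every} unit $m$-vector $\vec\pi$. (One should also let $\eta_k\to 0$, or simply fix $\eta$ small enough from the start; I would treat $\eta$ as a parameter chosen at the end.) The first step is to normalize: set $w_k := v_k/\sqrt{E_k}$. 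Since $\int_{B_{1-\eta}}|Dv_k|^2 \le C E_k$ by the comparability of Dirichlet energy and cylindrical excess (from \eqref{e:formula_excess}, \eqref{e:area_formula}), the maps $w_k$ are bounded in $W^{1,2}$, so (after subtracting the average and passing to a subsequence) $w_k \rightharpoonup w$ weakly in $W^{1,2}_{loc}$ and strongly in $L^2_{loc}$, with $\int|Dw|^2\le 1$.

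\textbf{The rescaled maps are almost harmonic.} The heart of the argument is to show the limit $w$ is harmonic. Here is where the improved Lipschitz approximation of Proposition \ref{p:lip_approx} and the Taylor-expansion/cut-and-paste technology developed in its proof enter. Fix a test vector field (equivalently a smooth compactly supported perturbation $\varphi$). Using the $E_k^\gamma$-Lipschitz approximation $\bar v_k$ of $v_k$ (Definition \ref{d:Lip}), one knows $\{\bar v_k \neq v_k\}$ has measure $\le C E_k^{1+\gamma}$ and $\Lip(\bar v_k)\le C E_k^\gamma \to 0$. On $\gr(\bar v_k)$ the area integrand is, by the Taylor expansion used repeatedly above, $1 + \frac12|D\bar v_k|^2 + O(|D\bar v_k|^4)$. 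Comparing $\bar v_k$ with $\bar v_k + t\varphi$ via the cut-and-paste construction (interpolating in a thin annulus exactly as in the proof of Proposition \ref{p:lip_approx}, using the smallness of $\int_{K^c}|Dv_k|^2$ to control the error there) and using the minimality of $\gr(v_k)$ gives
\[
\int D\bar v_k : D\varphi \;=\; o\!\left(\|D\bar v_k\|_{L^2}\right) \;=\; o\!\left(E_k^{1/2}\right),
\]
so after dividing by $\sqrt{E_k}$ and passing to the limit (using $D\bar v_k/\sqrt{E_k} \rightharpoonup Dw$, which follows because $\bar v_k$ and $v_k$ differ on a set whose measure is $o(E_k)$) we obtain $\int Dw : D\varphi = 0$ for all $\varphi$, i.e. $w$ is harmonic.

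\textbf{Conclusion via harmonic estimates.} Since $w$ is harmonic, it satisfies the interior decay estimate
\[
\int_{B_{1/2}} |Dw(y) - Dw(0)|^2\, dy \;\le\; C\, (1/2)^{m+2}\int_{B_{3/4}}|Dw|^2 \;\le\; C\,(1/2)^{m+2},
\]
and in particular, choosing the competitor plane $\vec\pi$ to be the one whose linear part is $Dw(0)$, one gets $\frac{1}{\omega_m (1/2)^m}\int_{B_{1/2}}|Dw - Dw(0)|^2 \le C (1/2)^{2}$. The missing ingredient now is \emph{strong} $L^2_{loc}$ convergence of $Dw_k$ to $Dw$ — weak convergence is not enough to pass the excess, which is a quadratic quantity, to the limit and back. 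This strong convergence is itself a consequence of the almost-minimality: comparing $v_k$ with the competitor obtained by gluing the harmonic extension of $w_k\!\mid_{\partial B_\rho}$ (suitably rescaled to $\sqrt{E_k}$ times a harmonic function) inside $B_\rho$, again via cut-and-paste, forces $\int_{B_\rho}|Dw_k|^2 \to \int_{B_\rho}|Dw|^2$, hence strong convergence. Then
\[
\frac{\bE(\gr(v_k),\bC_{1/2},\vec\pi_k)}{E_k} \;\longrightarrow\; \frac{1}{2\,E_k\cdot}\!\!\quad\ \ \text{(formally)}\ \frac{1}{\omega_m 2^{-m}}\,\frac{1}{\omega_m}\int_{B_{1/2}}|Dw - Dw(0)|^2,
\]
more precisely $\limsup_k 2^{m}\,\bE(\gr(v_k),\bC_{1/2},\vec\pi_k)/E_k \le \frac{1}{2\omega_m}\int_{B_{1/2}}|Dw-Dw(0)|^2 \le C 2^{-m-2}$, which is $\le 2^{-2\alpha}$ for $k$ large (since $\alpha<1$), contradicting the failure of \eqref{e:excess_decay_cil}. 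Finally one must reconcile the cylindrical excess over the slightly tilted plane $\pi_k$ with the one over the fixed $\pi_0$ and absorb the loss from $\eta$: the tilt is $O(\sqrt{E_k})$ by Proposition \ref{p:compare1}, the area element differs from $1$ by $O(E_k)$, and $\eta$ is chosen small (a function of $\alpha, m, n$ only) so that passing from $\bC_{1-\eta}$ to $\bC_{1/2}$ in the normalization costs a factor arbitrarily close to $1$.

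\textbf{Main obstacle.} The delicate point — and the one I would spend the most care on — is the \emph{cut-and-paste error control} in two places: (i) showing the Lipschitz approximation is good enough that $D\bar v_k/\sqrt{E_k}$ and $Dv_k/\sqrt{E_k}$ have the same weak limit and the same energy in the limit (this uses $|B\setminus K| \le C E_k^{1+\gamma} = o(E_k)$ together with the crude Lipschitz bound on the bad set), and (ii) obtaining the strong $W^{1,2}$ convergence of the blow-ups, which requires building, for a.e. radius $\rho$, a competitor that is genuinely admissible (agreeing with $v_k$ near $\partial B_\rho$) whose interior is essentially the rescaled harmonic function — the interpolation layer must be thin enough that its energy is a negligible fraction of $E_k$, exactly the bookkeeping carried out in the proof of Proposition \ref{p:lip_approx}. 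Everything else is soft (Rellich compactness, Weyl's lemma for harmonicity, interior estimates for harmonic functions).
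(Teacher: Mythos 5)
Your proposal is correct and follows essentially the same route as the paper: blow-up of the normalized maps to a harmonic limit, with the strong $W^{1,2}$ convergence forced by minimality through the cut-and-paste competitor (this is exactly Proposition \ref{p:harm_bu}), followed by the interior decay estimate for harmonic functions and the Taylor expansion of the excess around the plane determined by the averaged gradient. The one point you should make explicit is that the constant in the harmonic decay estimate (Lemma \ref{l:harmonic}) must be taken equal to $1$ — which the spherical-harmonics proof indeed gives — since an unspecified $C>1$ in your bound $C\,2^{-m-2}$ would not yield $2^{-m-2\alpha}$ for $\alpha$ close to $1$.
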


Using the $E^\gamma$-Lipschitz approximation $w$ of $v$ and a Taylor expansion, we easily see that the the Dirichlet energy of $w$ and the excess of $u$ are pretty close, more precisely
\[
\frac{1}{2} \int_{B_{r (1-E^\gamma)}} |Dw|^2 = E + O( E^{1+\gamma})
\]
We next use the above estimate to show

\begin{myprop}\label{p:harm_bu}
Let $v_k : B_r (x)\to \mathbb R^n$ be a sequence of Lipschitz functions with $\Lip (v) \leq 2$ whose graphs are area minimizing and such that
\[
E_k := r^{-m} \bE (\gr (v_k), \bC_r) \downarrow 0\, .
\]
Consider the rescaled functions 
\[
f_k := \frac{v_k - (v_k)_{0, r (1-E^\gamma)}}{E_k^{\frac{1}{2}}}\, .
\]
Then $v_k$ converges, up to subsequences, strongly in $W^{1,2}_{loc} (B_r)$ to an harmonic function.
\end{myprop}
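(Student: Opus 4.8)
The plan is to establish three ingredients: (i) a uniform $W^{1,2}$ bound for the $f_k$, which gives weak compactness; (ii) the fact that any weak limit $f$ is harmonic; and (iii) upgrading of the convergence from weak to strong in $W^{1,2}_{loc}$. For (i), observe that by the comparison between the area integrand and the Dirichlet energy (the Taylor expansion used throughout Section~2), one has $\int_{B_{r(1-E_k^\gamma)}}|Dv_k|^2 \le C\, \bE(\gr(v_k),\bC_r) = C r^m E_k$, so that $\int_{B_{r(1-E_k^\gamma)}}|Df_k|^2 \le C r^m$ uniformly in $k$. Since $(f_k)_{0,r(1-E_k^\gamma)}=0$, the Poincar\'e inequality gives a uniform bound on $\|f_k\|_{W^{1,2}(B_{r'})}$ for every fixed $r'<r$ once $k$ is large enough that $r(1-E_k^\gamma)>r'$. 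Hence, up to a subsequence, $f_k \rightharpoonup f$ weakly in $W^{1,2}_{loc}(B_r)$ and strongly in $L^2_{loc}$.

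To see that $f$ is harmonic, fix $\varphi\in C_c^\infty(B_r,\mathbb R^n)$ and a radius $r'<r$ with $\mathrm{supp}\,\varphi\subset B_{r'}$, and for $t\in\mathbb R$ small compare $v_k$ with the competitor $v_k + t E_k^{1/2}\varphi$ (which is admissible once $k$ is large, since then $r(1-E_k^\gamma)>r'$). Using the Taylor expansion of the area functional and the fact that $|Dv_k| = E_k^{1/2}|Df_k|$ is small in $L^2$, one obtains, after dividing by $E_k$,
\[
\frac{1}{2}\int_{B_{r'}}|Df_k|^2 \le \frac{1}{2}\int_{B_{r'}}|Df_k + tD\varphi|^2 + \frac{C E_k^{1/2}}{E_k}\cdot o(1)\,,
\]
where the error term genuinely tends to $0$ because the higher-order minors in \eqref{e:area_formula} contribute at order $|Dv_k|^4 = E_k^2|Df_k|^4$ and, after dividing by $E_k$, carry a factor $E_k$ times an $L^2$-bounded quantity. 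Expanding the square and letting $k\to\infty$, using weak lower semicontinuity of $f\mapsto\int|Df|^2$ on the left and strong convergence of the cross term $\int \langle Df_k, tD\varphi\rangle$ (which follows from weak convergence of $Df_k$), we arrive at $\int_{B_{r'}}\langle Df, D\varphi\rangle = 0$ for all such $\varphi$, so $f$ is weakly, hence classically, harmonic.

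For the strong convergence (iii), fix $r'<r''<r$ and a cutoff $\chi\in C_c^\infty(B_{r''})$ with $\chi\equiv 1$ on $B_{r'}$. Test the almost-minimality of $v_k$ against the competitor obtained by interpolating between $v_k$ outside $B_{r''}$ and $f_k$ replaced by (the mollification of) $f$ inside — more precisely, compare $v_k$ with $v_k + E_k^{1/2}\chi(f-f_k)$ regularized as in the cut-and-paste construction of Proposition~\ref{p:lip_approx}, which is legitimate because $f$ is smooth and $f_k\to f$ in $L^2_{loc}$. Running the same Taylor expansion yields $\int_{B_{r'}}|Df_k|^2 \le \int_{B_{r'}}|Df|^2 + o(1)$, i.e.\ $\limsup_k \int_{B_{r'}}|Df_k|^2 \le \int_{B_{r'}}|Df|^2$, which combined with weak lower semicontinuity forces $\int_{B_{r'}}|Df_k|^2 \to \int_{B_{r'}}|Df|^2$; together with weak convergence this gives $Df_k\to Df$ strongly in $L^2(B_{r'})$. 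The main obstacle is bookkeeping the error terms: one must check that every remainder produced by the Taylor expansion of the area integrand and by the radial interpolation in the competitor construction is $o(E_k)$ (not merely $O(E_k)$), which is exactly where the smallness of $\int|Df_k|^2\cdot E_k$ and of the layer estimates \eqref{e:small_layer}-\eqref{e:one_annulus} from Proposition~\ref{p:lip_approx} are used; the rest is the standard Euler--Lagrange-plus-semicontinuity argument for harmonic blow-ups.
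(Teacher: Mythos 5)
Your skeleton (uniform energy bound and weak compactness, harmonicity of the limit, then a $\limsup$ energy inequality to upgrade to strong convergence) is the right one, and your use of the first variation along $v_k+tE_k^{1/2}\varphi$ to get harmonicity is a genuinely different route from the paper's, which instead shows that $f$ minimizes the Dirichlet energy by contradiction with a better competitor $h$. However, there is a genuine gap in your error analysis, and it sits exactly where the paper invokes its key tool. You Taylor-expand the area of $\gr(v_k)$ (and of your competitors) around the Dirichlet energy and claim the remainder is $o(E_k)$ because the quartic terms are ``$E_k^2|Df_k|^4$, i.e.\ $E_k$ times an $L^2$-bounded quantity.'' This is false: what is bounded is $\int |Df_k|^2$, not $\int |Df_k|^4$. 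Since only $\Lip(v_k)\le 2$ is known, the best available bound is $\int |Dv_k|^4\le \Lip(v_k)^2\int|Dv_k|^2=O(E_k)$, so after dividing by $E_k$ the remainder is $O(1)$, not $o(1)$; the displayed inequality $\tfrac12\int|Df_k|^2\le\tfrac12\int|Df_k+tD\varphi|^2+o(1)$ does not follow, and the same $O(E_k)$-but-not-$o(E_k)$ error destroys the lower bound $\Vol^m(\gr(v_k))\ge |B|+\tfrac12\int|Dv_k|^2-o(E_k)$ that your step (iii) needs.

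The missing ingredient is precisely Proposition \ref{p:lip_approx}: one must first replace $v_k$ by its $E_k^\gamma$-Lipschitz approximation $w_k$, for which $\Lip(w_k)\le C E_k^\gamma\to 0$, so that $\int|Dw_k|^4\le CE_k^{1+2\gamma}=o(E_k)$ and the Taylor expansion is accurate to $o(E_k)$; the discrepancy between $\gr(v_k)$ and $\gr(w_k)$ is then controlled by the measure $O(E_k^{1+\gamma})$ of the bad set and by the estimate $\int_{B\setminus K_k}|Dv_k|^2\le CE_k^{1+\gamma}$. This is exactly how the paper's proof begins, and it is also why the layer estimates you cite exist in the first place. (Your harmonicity step can alternatively be repaired \emph{without} $w_k$ by expanding the difference of the two areas rather than each one separately: the exact first variation of the minimizer vanishes, and $|D_AF(A)-A|\le C|A|^3$ gives $|\int Dv_k:D\varphi|\le C\Lip(v_k)\int|Dv_k|^2=O(E_k)$, hence $|\int Df_k:D\varphi|\le CE_k^{1/2}\to 0$; here one divides only by $E_k^{1/2}$, so the cubic error suffices. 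The strong-convergence step cannot be fixed this way.) Finally, note that the paper flags the symmetric difficulty for the competitor built from the limit: before harmonicity is known, $f$ is merely $W^{1,2}$ and $E_k^{1/2}f$ need not have small Lipschitz constant, which the paper resolves with a diagonal sequence of Lipschitz approximations $h_{j(k)}$ of $f$; your appeal to the smoothness of $f$ is legitimate only after harmonicity has been established, so the order of your steps matters and should be stated explicitly.
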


\begin{proof} Without loss of generality let $x=0$ and $r=1$ and assume $(v_k)_{0, 1-E^\gamma} = 0$. Note first that up to subsequences we can assume the existence of a weak limit $f\in W^{1,2} (B_1)$. Apply the Lipschitz approximation Proposition \ref{p:lip_approx} and let $K_k$ be the corresponding ``good sets''. Moreover let $w_k$ be a $E_k^\gamma$-Lipschitz extension of $v_k|_{K_k}$ to $B_{\rho_k} = B_{(1- E_k^\gamma)}$.
If we denote by $f'_k$ be the corresponding normalizations $E_k^{-\frac{1}{2}}w_k$, we still conclude that $f'_k$ converges to $f$. 
Assume by contradiction that for some radius $\rho<1$, the limit of the $f'_k$ is weak in the $W^{1,2}$ topology. We then must have
\[
\int_{B_\rho} |Df|^2 < \liminf_k \int_{B_\rho} |Df'_k|^2\, .
\]
Now, by using the cut-and-paste argument of Section \ref{ss:Lipschitz} and the Taylor expansion of the area functional, we would like to use the graph of $g_k = E_k^{\frac{1}{2}}f $ as a competitor for $\gr (v_k)$, violating the minimality of $\gr (v_k)$. We want to achieve this task by first pasting $v_k$ with $w_k$ over
an apprioprate annulus and then $g_k$ with $w_k$ in a second, slightly smaller, annulus, similarly to what was done in the proof of Proposition \ref{p:lip_approx} . Note that we have at our disposal the two crucial estimates which were used in the cut-and-paste argument:
\[
\int_{B_\rho} |Dg_k|^2 = O( E_k)
\] 
and
\[
\int_{B_\rho} |g_k - w_k|^2 = o (E_k)\, ,
\]
However, one important issue is that we do not know that $\Lip (g_k) \to 0$, which would be crucial to compare the  $\Vol^m (\gr (g_k))$ to the Dirichlet energy of $g_k$. In order to come around this issue fix a sequence of Lipschitz functions $h_j$ converging strongly in $W^{1,2}$ to $f$. A suitable diagonal sequence $E_k^{\frac{1}{2}} h_{j(k)}$ will have at the same time Lipschitz constants which converge to $0$ and will satisfy the estimates
needed to use the cut-and-paste argument.

The harmonicity of $f$ is proved in a similar way: if there is a competitor $h$ for $f$ in some $B_\sigma \subset\subset B_1$ with less Dirichlet energy, we fix
an intermediate radius $\rho$ between $\sigma$ and $1$ and we then run the argument above with $f$ replaced by the function
\[
f' (x) = \left\{
\begin{array}{ll}
f (x) \quad &\mbox{if $|x|\geq \sigma$}\\
h(x) \quad &\mbox{if $|x|\leq \sigma$}
\end{array}\right.
\]
The cut and paste argument will then be run in annuli contained in $B_\rho\setminus B_\sigma$. 
\end{proof}

\subsection{Cylindrical excess decay} We are now ready to complete the proof of Proposition \ref{p:excess_decay_3}.
The first ingredient is the following estimate for harmonic functions:

\begin{mylem}\label{l:harmonic}
Consider $h: B_r (x) \to \mathbb R^n$ harmonic and let $\rho< r$. Then
\begin{equation}
\int_{B_\rho (x)} |Dh - (Dh)_{x, \rho}|^2 \leq C \left(\frac{\rho}{r}\right)^{m+2} \int_{B_r (x)} |Dh|^2\, .
\end{equation}
\end{mylem}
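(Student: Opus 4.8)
The plan is to exploit the two classical facts about harmonic functions: derivatives of harmonic functions are harmonic, and harmonic functions satisfy interior mean-value / gradient estimates. The key observation is that, once we subtract the constant vector $(Dh)_{x,\rho}$, the quantity we must bound is the oscillation of $Dh$ on the small disk $B_\rho(x)$, which we can control by the $C^1$ norm of $Dh$ on, say, $B_{\rho/2}(x)$ using that each component $\partial_i h$ is itself harmonic.

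First I would reduce to the normalized situation $x=0$, $r=1$ by the obvious rescaling $h(y)\mapsto h(x+ry)$, which scales both sides of the claimed inequality in the same way (the left side picks up a factor $r^{m-2}$ after the change of variables, and so does $\left(\frac{\rho}{r}\right)^{m+2}\int_{B_r}|Dh|^2$ when one rewrites it in terms of the rescaled function). After this reduction we must show $\int_{B_\rho}|Dh-(Dh)_{0,\rho}|^2\le C\rho^{m+2}\int_{B_1}|Dh|^2$ for every $\rho<1$. It suffices to treat $\rho\le 1/2$: for $\rho\in[1/2,1)$ the inequality is trivial since the left side is at most $\int_{B_1}|Dh|^2$ and $\rho^{m+2}$ is bounded below by a dimensional constant.

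For $\rho\le 1/2$ I would argue as follows. By the mean-value property applied to the harmonic function $Dh$ on $B_1$, and interior estimates for harmonic functions, we have the bound $\|D^2 h\|_{L^\infty(B_{1/2})}\le C\|Dh\|_{L^1(B_1)}\le C\|Dh\|_{L^2(B_1)}$ (first estimate is the standard gradient bound $|D(\partial_i h)(0)|\le \frac{C}{\,}\fint_{B_{1/2}}|\partial_i h|$ together with a covering of $B_{1/2}$ by small disks, then Cauchy--Schwarz). Since $B_\rho\subset B_{1/2}$ and $Dh$ is $C^1$ there with the above bound on its derivative, the mean-value theorem gives $|Dh(y)-(Dh)_{0,\rho}|\le 2\rho\,\|D^2h\|_{L^\infty(B_{1/2})}$ for $y\in B_\rho$. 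Squaring and integrating over $B_\rho$, whose measure is $\omega_m\rho^m$, yields $\int_{B_\rho}|Dh-(Dh)_{0,\rho}|^2\le C\rho^{m+2}\|Dh\|_{L^2(B_1)}^2$, which is exactly the claim.

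I do not expect any genuine obstacle here: the only point requiring a little care is the bookkeeping of the scaling exponents in the reduction step (making sure the powers of $r$ on the two sides of the inequality genuinely match), and being slightly careful that the interior estimate is applied on $B_{1/2}$ rather than on $B_1$ itself so that the constant is finite; everything else is the standard toolkit for harmonic functions.
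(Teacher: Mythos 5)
Your argument is correct and complete, but it follows a different route from the one the paper has in mind. The paper's (one-line) proof reduces to $r=1$ by scaling and then expands $h=\sum_k P_k$ into homogeneous harmonic polynomials (spherical harmonics): by the mean-value property the average $(Dh)_{0,\rho}$ equals the constant gradient $DP_1$, the gradients $DP_k$ are mutually $L^2(B_s)$-orthogonal for every $s$, and $\int_{B_\rho}|DP_k|^2=\rho^{m+2k-2}\int_{B_1}|DP_k|^2\leq \rho^{m+2}\int_{B_1}|DP_k|^2$ for $k\geq 2$; summing gives the decay with the clean constant $C=1$ after normalization. You instead handle $\rho\geq 1/2$ trivially and, for $\rho\leq 1/2$, combine the interior estimate $\|D^2h\|_{L^\infty(B_{1/2})}\leq C\|Dh\|_{L^2(B_1)}$ (each $\partial_i h$ being harmonic) with the calculus mean-value theorem to bound the oscillation of $Dh$ on $B_\rho$ by $C\rho\|Dh\|_{L^2(B_1)}$. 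Your approach is more elementary (no orthogonality or spectral decomposition needed) and generalizes readily to solutions of other elliptic equations with interior estimates, at the price of a non-explicit dimensional constant and the case split at $\rho=1/2$; the spherical-harmonics proof is specific to harmonic functions but yields the sharp statement in one stroke. One small slip in your narrative: after the change of variables the left-hand side picks up the factor $r^{2-m}$ (namely $r^{-m}$ from the Jacobian times $r^{2}$ from the chain rule), not $r^{m-2}$; since the identical factor appears on the right-hand side this does not affect the reduction.
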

\begin{proof} The proof is left to the reader: reduce it by scaling to the case $r=1$, 
use the decomposition of $h|_{\partial B_1}$ in spherical harmonics (see for instance \cite{SW}) and the mean-value theorem for harmonic functions.
\end{proof}

\begin{mylem}\label{e:Taylor_2}
Let $w: B_{1/2} \to \mathbb R^n$ be a Lipschitz function with $\Lip (w) \leq 2$. Set $A = (Dw)_{0, 1/2}$, consider the linear map $x\mapsto A x$ and
let $\vec{\pi}$ be the unit vector orienting its graph according to our definitions. We then have
\[
\bE (\gr (w), \bC_{1/2}, \vec{\pi}) \leq \frac{1}{2} \int_{B_{1/2}} |Dw- (Dw)_{0,1/2}|^2 + C \Lip (w) \int_{B_{1/2}} |Dw|^2\, .
\] 
\end{mylem}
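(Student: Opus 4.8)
\emph{Plan.} The plan is to rewrite the cylindrical excess as an integral over the disk $B_{1/2}$ by means of the area formula \eqref{e:area_formula}, and then to Taylor--expand the ``Gauss map''. For an $n\times m$ matrix $B$ (thought of as a linear map $\mathbb R^m\to\mathbb R^n$) let $\vec{T}(B)$ denote the unit $m$-vector orienting the graph of $x\mapsto Bx$; thus $\vec{T}(Dw(y)) = \vec{T}_{(y,w(y))}\gr(w)$ and, with $A = (Dw)_{0,1/2}$, $\vec{T}(A) = \vec{\pi}$. The area formula gives
\[
2\,\bE(\gr(w), \bC_{1/2}, \vec{\pi}) = \int_{B_{1/2}} |\vec{T}(Dw(y)) - \vec{T}(A)|^2\, \mathbf{J}(Dw(y))\, dy\, ,
\]
where $\mathbf{J}(B) := \big(1 + |B|^2 + \sum_{|\alpha|\geq 2}(\det M_{\alpha\beta}(B))^2\big)^{1/2}$ is the area integrand.

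First I would collect the relevant facts about $\vec{T}$. Since $\mathbf{J}>0$ everywhere, $\vec{T}$ is smooth on the whole space of matrices, $\vec{T}(0) = \vec{\pi}_0$, and an elementary computation (using $\mathbf{J}(B) = \sqrt{\det(\mathrm{Id}+B^TB)} = 1 + O(|B|^2)$) shows that the differential of $\vec{T}$ at $0$ is the linear map $B \mapsto \sum_{j=1}^m e_1\wedge\cdots\wedge (Be_j)\wedge\cdots\wedge e_m$. The crucial point is that this differential is an \emph{isometry} for the Hilbert--Schmidt norm, because the basis $m$-vectors it produces (obtained from $\vec\pi_0$ by replacing one factor $e_j$ by $e_{m+k}$) are pairwise orthonormal; hence $|D\vec{T}(0)\,B| = |B|$. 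Since $\Lip(w)\le 2$, both $A$ and all the matrices $Dw(y)$ lie in a fixed ball $\{|B|\le C_0\}$ (the Hilbert--Schmidt and operator norms being comparable up to a dimensional factor, and $|A|\le\sup|Dw|$ by Jensen), and there a Taylor expansion with integral remainder yields, for $|A|,|B|\le C_0$,
\[
|\vec{T}(B) - \vec{T}(A)| \le |B - A| + C\,(|A| + |B|)\,|B - A|\, ,
\]
whence $|\vec{T}(B) - \vec{T}(A)|^2 \le |B-A|^2 + C\,(|A|+|B|)\,|B-A|^2$; multiplying by $\mathbf{J}(B) \le 1 + C|B|^2$ and absorbing the quadratic factors (all norms being bounded by $C_0$) gives the pointwise bound
\[
|\vec{T}(B) - \vec{T}(A)|^2\,\mathbf{J}(B) \le |B - A|^2 + C\,(|A| + |B|)\,|B - A|^2\, .
\]

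Finally I would integrate this over $B_{1/2}$ with $B = Dw(y)$ and $A = (Dw)_{0,1/2}$ fixed. The leading term integrates to $\int_{B_{1/2}}|Dw - (Dw)_{0,1/2}|^2$, which, after dividing by $2$, is the first term of the claim. For the remainder I would use $|A| + |Dw(y)| \le C\,\Lip(w)$, the crude bound $|Dw(y)-A|^2 \le C(|Dw(y)|^2 + |A|^2)$, and the Jensen inequality $|B_{1/2}|\,|A|^2 = |B_{1/2}|\,|(Dw)_{0,1/2}|^2 \le \int_{B_{1/2}}|Dw|^2$ to conclude $\int_{B_{1/2}}(|A|+|Dw|)\,|Dw - A|^2 \le C\,\Lip(w)\int_{B_{1/2}}|Dw|^2$; dividing by $2$ finishes the proof. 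The only delicate point is the isometry identity $|D\vec{T}(0)\,B| = |B|$: it is precisely what makes the Dirichlet energy appear with the sharp constant $\frac{1}{2}$, everything quadratic beyond it being harmless and swallowed by the $C\,\Lip(w)\int|Dw|^2$ error term. The rest is routine manipulation of elementary inequalities.
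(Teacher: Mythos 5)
Your proof is correct and is exactly the argument the paper has in mind — the paper leaves this lemma to the reader as ``a simple linear algebra computation combined with a classical Taylor expansion,'' which is precisely what you carry out (the linear algebra being the isometry of $D\vec{T}(0)$ on the orthonormal $m$-vectors obtained by replacing one $e_j$ with an $e_{m+k}$, and the Taylor expansion controlling the remainder by $C(|A|+|B|)|B-A|^2$). All steps check out, including the use of Jensen to absorb $\int |A|^2$ into $\int |Dw|^2$.
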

\begin{proof} The proof is left to the reader:
it is a simple linear algebra computation combined with a classical Taylor expansion.
\end{proof}

\begin{proof}[Proof of Proposition \ref{p:excess_decay}] First let $w$ be the $E^\gamma$-Lipschitz approximation of the map $v$ (see Proposition
\ref{p:lip_approx} and Definition \ref{d:Lip}) and observe that, provided $E$ is sufficiently small, $w$ is defined on $B_{1-2\eta}$. Let $\vec{\pi}$ be the unit $m$-vector 
orienting the graph of the linear map $x\mapsto A x$ with $A= (Dw)_{0, 1/2}$. By Proposition \ref{p:lip_approx} and Lemma \ref{e:Taylor_2} we then have
\begin{align*}
\bE (\gr (v), \bC_{1/2}, \vec{\pi}) &\leq \bE (\gr (w), \bC_{1/2}, \vec{\pi}) + C E^{1+\gamma}\\
 &\leq \frac{1}{2} \int_{B_{1/2}} |Dw- (Dw)_{0,1/2}|^2 + C E^{1+\gamma}\, ,
\end{align*}
and 
\[
\frac{1}{2} \int_{B_{1-2\eta}} |Dw|^2 \leq E + C E ^{1+\gamma}\, .
\]
Use now Proposition \ref{p:harm_bu} to infer the existence of a harmonic function $h$ such that
$\| w - h\|^2_{W^{1,2} (B_{1-2\eta})} = o (E)$. In particular we can estimate
\[
\bE (\gr (v), \bC_{1/2}, \vec{\pi}) \leq \frac{1}{2} \int_{B_{1/2}} |Dh - (Dh)_{0,1/2}|^2 + o (E)
\]
and
\[
\frac{1}{2} \int_{B_{1-2\eta}} |Dh|^2 \leq E + o (E)\, .
\]
Using Lemma \ref{l:harmonic} we then infer
\begin{align*}
\bE (\gr (v), \bC_{1/2}, \vec{\pi}) &\leq \left(\frac{1-2\eta}{2}\right)^{m+2} E + o (E)\nonumber\\
&\leq \left(\left(\frac{1-2\eta}{2}\right) ^{m+2} + o(1)\right) \bE (\gr (v), \bC_{1-\eta})\, ,
\end{align*}
which is enough to complete the proof. 
\end{proof}

\section{Center manifold algorithm}

We next turn our attention to Theorem \ref{t:almgren}. As already mentioned, the claim could be easily proved by
first using De Giorgi's theorem to show that $u\in C^{1,\alpha}$, then deriving the Euler Lagrange equation for $u$ as
a minimizer of the area integrand and hence appealing to the Schauder estimates for elliptic systems. We instead decide
to ignore Schauder's estimates: we will introduce an efficient approximation algorithm producing a sequence of regularizations of $u$ which converges uniformly and for which we have uniform $C^{3,\beta}$ estimates for some positive $\beta$. In doing so we will even ignore the fact that $u\in C^{1,\alpha}$ and only use some of the corollaries of De Giorgi's excess decay.

\subsection{The grid and the $\pi_L$-approximations.} We start by considering the cube
\[
[-\sigma, \sigma]^m \subset B_{1/2} \subset B_1
\] 
and subdividing it in $2^{mk}$ {\em closed} cubes using a regular grid. We require that $k\geq N_0$, where $N_0$ will be specified in a moment. We will denote by $\ell (L)$ the sidelength of each cube of the grid and by $x_L$ its center. 

We denote by $p_L$ the point $p_L = (x_L, u (x_L))$ and we then let $\bB_L$ be the ball $\bB_{32 M_0 \ell (L)} (p_L)$, where $M_0$ is another sufficiently large geometric constant. Indeed $M_0 = \sqrt{n}$ suffices and this choice also dictates the choice of $N_0$: we want to guarantee that each $\bB_L$ is contained in the cylinder $\bC_1$ and thus we just need $32 \sqrt{n} \sigma 2^{-N_0} <1$. 

Next recall that, by the De Giorgi's excess decay, if we fix any $\delta>0$ we can assume
\begin{equation}\label{e:ex_decay_100}
\bE (\gr (u), \bB_L) \leq C \ell (L)^{2-2\delta} E\, 
\end{equation}
where $C = C(m,n,M_0, N_0, \delta)$. Let now $\pi_L$ be an oriented $m$-dimensional plane which optimizes the spherical excess\footnote{We do not discuss here whether such optimizer is unique, since it is irrelevant for the rest of the proof: if there is more than one optimal plane, we just fix an arbitrary choice.} in $\bB_L$, namely such that
\[
\bE (\gr (u), \bB_L, \vec{\pi}_L) = \bE (\gr (u), \bB_L)\, .
\]
Recalling the estimates of the previous chapter we get\footnote{Again the constants of the next estimates depend on $m,n,M_0, N_0$ and $\delta$: in the rest of the notes the constants will depend on these parameters unless we explicitely mention their dependence.}
\begin{equation}\label{e:quasi_orizzontale}
|\vec{\pi}_L - \vec{\pi}_0| \leq C E^{\frac{1}{2}}
\end{equation}
and in particular
\begin{equation}\label{e:eccesso_orizzontale}
\bE (\gr (u), \bB_L, \vec{\pi}_0) \leq C E\, .
\end{equation}
Since we will need it often, we now introduce a special notation to deal with tilted disks and cylinders. First of all we set
$B_r (p, \pi):= \bB_r (p) \cap (p+\pi)$ and hence we define 
\[
\bC_r (p, \pi) := B_r (p, \pi) + \pi^\perp\, ,
\]
where $\pi^\perp$ denotes the $n$-dimensional plane perpendicular to $\pi$. 

Applying Lemma \ref{l:sp-to-cyl} we conclude that $\bC_{16 M_0 \ell (L)} (p_L, \pi_L) \cap \gr (u)$ is in fact the graph
of a Lipschitz map $v_L : B_{16 M_0 \ell (L)} (p_L, \pi_L) \to \pi_L^\perp$ and we set
\[
E (L) := \ell(L)^{-m} \bE (\gr (u), \bC_{16 M_0 \ell (L)} (p_L, \pi_L))\, .
\]
Observe that the Lipschitz constant of $v_L$ is bounded by 
\[
\Lip (v_L) \leq \Lip (u) + C |\vec{\pi}_L- \vec{\pi}_0| \leq 1+ C E^{\frac{1}{2}}\, .
\] 
In particular, if $E$ is sufficiently small, 
\[
\bC_{16 M_0 \ell (L)} (p_L, \pi_L) \cap \gr (u) \subset \bB_{32 M_0 \ell (L)} (p_L, \pi_L)
\] 
and 
so
$E(L) \leq C E \ell (L)^{2-2\delta}$: we can thus apply 
Proposition \ref{p:lip_approx}.
We then denote by $f_L$ the $E(L)^\gamma$-Lipschitz approximation of $v_L$ in $\bC_{8 M_0 \ell (L)} (p_L, \pi_L)$. 
$f_L$ will be called the {\em $\pi_L$-approximation}. Moreover, recall that the functions $f_L$ and $v_L$ coincide on a large set,
more precisely
\begin{align}
|\{v_L\neq f_L\}\cap B_{8M_0 \ell (L)} (p_L, \pi_L)| &\leq C \ell (L)^m E (L)^{1+\gamma}\nonumber\\
 &\leq C E^{1+\gamma} \ell (L)^{m+ (2-2\delta)(1+\gamma)}\, .
\label{e:alessio_richiama_1}
\end{align}
A simple, yet useful, consequence of the latter estimate and the Lipschitz bounds on the two functions is then\footnote{$A\Delta B$ denotes the symmetric difference of the two sets $A$ and $B$, namely $A\Delta\, B = A \setminus B \cup B\setminus A$.}
\begin{equation}\label{e:alessio_richiama_2}
{\rm Vol}^m ((\gr (v_L) \Delta\, \gr (f_L)) \cap \bC_{8 M_0 \ell (L)} (p_L, \pi_L)) 
\leq C E^{1+\gamma} \ell (L)^{m+ (2-2\delta)(1+\gamma)}\, .
\end{equation}

\subsection{Interpolating functions and glued interpolations} Consider now a standard smooth function
$\varphi \in C^\infty_c (B_1)$ with $\int \varphi =1$ and let $\varphi_r$ be the corresponding family of mollifiers. We then set
\[
z_L := f_L \ast \varphi_{\ell (L)}\, .
\]
$z_L$ will be called the {\em tilted interpolating function} relative to the cube $L$.
We set conventionally $B_{4 M_0 \ell (L)} (p_L, \pi_L)$ to be the domain of definition of $z_L$. Clearly 
\[
\Lip (z_L) 
\leq C E(L)^\gamma \leq C E^\gamma \ell (L)^{(2-2\delta) \gamma}\, .
\]
Observe therefore
that we can use Lemma \ref{l:first_rotation} to infer the existence of a map $g_L: B_{2M_0 \ell (L)} (x_L, \pi_0) \to \pi_0^\perp$ such that
\[
\gr (z_L) \cap \bC_{2M_0 \ell (L)} (p_L, \pi_0) = \gr (g_L)\, .
\]
$g_L$ will be called the {\em interpolating function} relative to the cube $L$. 

Observe that the domain of $g_L$ contains the open cube $L'$ which is concentric to $L$ and has twice its side-length. 
Consider now a bump function $\vartheta\in C^\infty_c (]-\frac{9}{8}, \frac{9}{8}[^m)$ which is identically $1$ on the cube $[-1,1]^m$.
We then let 
\[
\vartheta_L (x) := \vartheta \left( \frac{2 (x-x_L)}{\ell (L)}\right)\, .
\]
Obviosuly $\vartheta_L$ is identically equal to $1$ on $L$ and it is supported in a concentric cube of sidelength equal to $\frac{9}{8} \ell (L)$. 

Denote by $\mathscr{C}_k$ all cubes $L$ of the grid (namely of the subdvision of $[-\sigma, \sigma]^m$ into $2^{km}$ closed cubes of sidelength $2^{-k} \sigma$). We then define the smooth function
\[
\zeta_k (x) = \frac{\sum_{L\in \mathscr{C}_k} \vartheta_L (x) g_L (x)}{\sum_{L\in \mathscr{C}_k} \vartheta_L (x)}
\]
and we call it {\em glued interpolation} at scale $2^{-k}$.

Almgren's theorem is then a simple corollary of the following

\begin{mythm}\label{t:almgren2}
Fix $M_0$ and $N_0$ as above. If $\delta>0$ is sufficiently small, then
there are positive constants $C$, $\beta$ and $\varepsilon_0$ (depending on $m,n, M_0, N_0$ and $\delta$) with the following properties. 
Let $u$ be as in Theorem \ref{t:almgren} and consider for each $k$ the glued interpolation $\zeta_k$ at scale $2^{-k}$. Then,
\begin{itemize}
\item[(a)] $\|D\zeta_k\|_{C^{2, \beta}} \leq C E^{\frac{1}{2}}$;
\item[(b)] $\|\zeta_k - u\|_{C^0 ([-\sigma, \sigma]^m)} \to 0$ as $k\uparrow\infty$.
\end{itemize}
\end{mythm}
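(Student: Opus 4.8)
The plan is to deduce Theorem~\ref{t:almgren} from Theorem~\ref{t:almgren2} by a soft compactness argument: since by (b) the $\zeta_k$ converge uniformly to $u$ on $[-\sigma,\sigma]^m$ and by (a) they are bounded in $C^{3,\beta}$, Arzel\`a--Ascoli together with the lower semicontinuity of H\"older norms under $C^0$-convergence force $u\in C^{3,\beta}(B_\sigma)$ with $\|Du\|_{C^{2,\beta}(B_\sigma)}\le CE^{1/2}$. To prove Theorem~\ref{t:almgren2} I would argue in three stages: (i) pointwise estimates on each single interpolating function $g_L$; (ii) a comparison estimate between $g_L$ and $g_M$ when $L,M$ are cubes of equal or consecutive sidelengths whose bump functions $\vartheta_L,\vartheta_M$ have overlapping supports; (iii) the passage from the $g_L$'s to the glued interpolations $\zeta_k$ through the partition of unity, followed by a telescoping summation over the dyadic scales.

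For stage~(i), the excess decay \eqref{e:ex_decay_100} and Lemma~\ref{l:sp-to-cyl} give $E(L)\le CE\,\ell(L)^{2-2\delta}$, so Proposition~\ref{p:lip_approx} applies to $v_L$: the $\pi_L$-approximation $f_L$ has $\Lip(f_L)\le CE(L)^\gamma$, coincides with $v_L$ outside a set of measure $\le CE(L)^{1+\gamma}\ell(L)^m$, and satisfies $\int|Df_L|^2\le CE\,\ell(L)^{m+2-2\delta}$. Mollifying at scale $\ell(L)$ to get $z_L$ and straightening via Lemma~\ref{l:first_rotation} to get $g_L$, one writes $D^jz_L=Df_L\ast D^{j-1}\varphi_{\ell(L)}$ and, for $j\ge2$, subtracts from $Df_L$ its local mean (legitimate since $\int D^{j-1}\varphi_{\ell(L)}=0$); Cauchy--Schwarz and the bound on $\int|Df_L|^2$ then yield $\|Dg_L\|_{C^0}\le CE^{1/2}$, $\|D^jg_L\|_{C^0}\le CE^{1/2}\ell(L)^{2-\delta-j}$ for $2\le j\le4$, and, comparing the two graphs in Hausdorff distance (a rotation-invariant quantity, so the tilt from $\pi_L$ to $\pi_0$ is harmless), $\|g_L-u\|_{C^0(L')}\le CE^{1/2}\ell(L)^{2-\delta}$ on the concentric doubled cube $L'$. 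These estimates are routine once Proposition~\ref{p:lip_approx} and Lemma~\ref{l:first_rotation} are available.

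Stage~(ii) is the crux. For $L,M$ with $\ell(M)\in\{\ell(L),2\ell(L)\}$ and $(\mathrm{spt}\,\vartheta_L)\cap(\mathrm{spt}\,\vartheta_M)\neq\emptyset$ (so $\ell:=\ell(L)\sim\ell(M)$) the target is $\|D^j(g_L-g_M)\|_{C^0}\le CE^{1/2}\ell^{\,3+\beta_0-j}$ for $0\le j\le4$ and a small geometric $\beta_0>0$. The bound one gets for free, namely $\|g_L-g_M\|_{C^0}\le\|g_L-u\|_{C^0}+\|g_M-u\|_{C^0}\le CE^{1/2}\ell^{2-\delta}$, is \emph{not} sufficient: the exponent $2-\delta$ is below $3$ and does not survive the three differentiations needed for the $C^{3,\beta}$-estimate. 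To reach exponent $3+\beta_0$ one must compare both $g_L$ and $g_M$ not merely to $u$ but to the harmonic approximation of $\gr(u)$ at scale $\ell$. Two inputs enter: first, the excess decay \eqref{e:ex_decay_100} forces the optimal planes $\pi_L,\pi_M$ --- hence the affine approximations of $u$ they induce --- to be mutually consistent across scales up to an error $\le CE^{1/2}\ell^{1-\delta}$ in slope; second, area minimality, via the cut-and-paste construction of Section~\ref{ss:Lipschitz} and the Taylor expansion of the area integrand (a quantified refinement of the blow-up of Proposition~\ref{p:harm_bu}, now with an explicit decay rate), improves the comparison of $\gr(u)$ with its harmonic approximation to a power of $\ell$ strictly larger than $2$. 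The interior estimates for harmonic functions --- Lemma~\ref{l:harmonic} and its higher-order versions --- then upgrade this gain to all the relevant $C^j$ norms. Implementing this chain of quantified comparisons, which is the technical heart of \cite{DS-cm}, is the main obstacle of the whole proof.

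For stage~(iii), around each cube $L_0\in\mathscr C_k$ one uses that $S_k:=\sum_{L\in\mathscr C_k}\vartheta_L\ge1$ to write $\zeta_k=g_{L_0}+S_k^{-1}\sum_{L}\vartheta_L(g_L-g_{L_0})$, noting that in the difference $\zeta_k-\zeta_{k-1}$ only boundedly many neighbouring cubes (of sidelength $\ell_k$ or $\ell_{k-1}=2\ell_k$) contribute, and that $\|D^i\vartheta_L\|_{C^0}\le C\ell_k^{-i}$ with bounded overlap; stage~(ii) then gives $\|D^i(\zeta_k-\zeta_{k-1})\|_{C^0}\le CE^{1/2}\ell_k^{\,3+\beta_0-i}$ for $0\le i\le4$, where $\ell_k:=2^{-k}\sigma$. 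Choosing $\beta<\beta_0$, for $k\ge N_0$ one sums the telescoping series: for $i\le3$, $\|D^i\zeta_k\|_{C^0}\le\|D^i\zeta_{N_0}\|_{C^0}+CE^{1/2}\sum_{j>N_0}\ell_j^{\,3+\beta_0-i}\le CE^{1/2}$, and, interpolating between the $C^3$ and $C^4$ bounds on $\zeta_j-\zeta_{j-1}$, $[D^3\zeta_k]_{C^{0,\beta}}\le[D^3\zeta_{N_0}]_{C^{0,\beta}}+CE^{1/2}\sum_{j>N_0}\ell_j^{\,\beta_0-\beta}\le CE^{1/2}$; the base terms at the fixed scale $\ell_{N_0}$ are controlled, with an $N_0$-dependent constant, by stage~(i). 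This proves (a); and (b) follows at once from $\|\zeta_k-u\|_{C^0([-\sigma,\sigma]^m)}\le\max_{L\in\mathscr C_k}\|g_L-u\|_{C^0(L)}\le CE^{1/2}\ell_k^{\,2-\delta}\to0$.
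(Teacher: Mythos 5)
Your architecture is the right one and matches the paper's: single--cube estimates, pairwise comparison of interpolating functions across consecutive scales and between neighbors, and a partition--of--unity gluing. Your stage~(iii) is in fact a clean variant of the paper's argument --- you telescope the glued functions $\zeta_k-\zeta_{k-1}$ over dyadic scales and interpolate between the $C^3$ and $C^4$ bounds on the increments to get the global H\"older seminorm, whereas the paper telescopes along the ancestry of each cube to prove uniform bounds on each $g_L$ (Proposition~\ref{p:key_estimates}(i)) and then treats far--apart points via the first common ancestor; both organizations work, and yours is arguably slicker, \emph{provided} the pairwise comparison estimates are available. Part (b) is also fine.

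The genuine gap is stage~(ii), which you yourself flag as ``the main obstacle of the whole proof'' and do not carry out. This is not a routine verification that can be deferred: without the estimate $\|D^j(g_L-g_M)\|_{C^0}\leq C E^{1/2}\ell^{3+\beta_0-j}$ for $0\leq j\leq 4$ the whole scheme collapses, since your stage--(i) bounds $\|D^j g_L\|_{C^0}\leq CE^{1/2}\ell(L)^{2-\delta-j}$ degenerate as $\ell(L)\to 0$ already for $j=2$ and therefore cannot feed the $j\geq 2$ (let alone $j=4$) cases of the comparison. Moreover, the mechanism you gesture at --- ``a quantified refinement of the blow-up of Proposition~\ref{p:harm_bu} with an explicit decay rate'' --- is not how the gain is actually obtained, and it is not clear how one would extract an explicit rate from a compactness argument. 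The paper's route (Propositions~\ref{p:key_prop}, \ref{l:interaction} and Lemma~\ref{l:interpolation_bis}) is direct and contains no blow-up: one tests the minimality of $\gr(u)$ against compactly supported variations, Taylor--expands the first variation of the area integrand, and uses that the Lipschitz approximation $\bar f$ differs from $u$ only on a set of measure $O(E^{1+\gamma}\ell^{m+2+2\beta})$ to conclude the almost--harmonicity estimate $\bigl|\int D\bar f: D\kappa\bigr|\leq CE\,\ell^{m+2+2\beta}\|D\kappa\|_{C^0}$. From this one reads off $\|\Delta \bar z\|_{C^0}\leq CE\ell^{1+2\beta}$ for the mollification $\bar z=\bar f\ast\varphi_{\ell}$ and, exploiting that the mollification error is tested against a gradient field, the crucial $L^1$ bound $\|\bar z-\bar f\|_{L^1}\leq CE\ell^{m+3+2\beta}$; elliptic estimates for the Laplacian, interpolation, and the $L^1$--to--$C^3$ interpolation lemma then convert these into the $C^j$ bounds on $g_L-g_K$ (and, by telescoping, into the uniform bounds on $D^jg_L$ themselves, including the $\ell^{-(1-\beta)}$ bound on $D^4g_L$ that your H\"older interpolation in stage~(iii) requires). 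This first--variation computation is the quantitative heart of the proof and is entirely absent from your proposal.
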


Clearly the key estimate in the theorem above is (a), since it is rather obvious that each interpolating function $g_L$
is in fact very close to $u$ on its own domain of definition: (b) is left to the reader as an exercise. 

\subsection{Estimates on the interpolating functions.}  Consider an $L\in \mathscr{C}_k$ with $k>N_0$. There is then a unique cube $K\in \mathscr{C}_{k-1}$ which contains it. $K$ will be called {\em the father of $L$}. Analogously, if $K\supset L$, $L\in \mathscr{C}_k$, $K\in \mathscr{C}_j$ and $j<k$, $K$ will be called {\em an ancestor} of $L$. Finally, if $K, L\in \mathscr{C}_k$ have nonempty intersection, they will be called {\em neighbors}.\footnote{Distinct cubes of $\mathscr{C}_k$ have disjoint interiors: the intersection of two neighbors is therefore a subset of
some $m-1$-dimensional plane and might reduce to a single point.}

The estimate (a) of Theorem \ref{t:almgren2} will then be a consequence of the following ones on the various ``pieces'' which we glue together.

\begin{myprop}\label{p:key_estimates}
Let $u$ be as in Theorem \ref{t:almgren} and the parameters $M_0$ and $N_0$ are fixed as in Theorem \ref{t:almgren2}. Then, provided $\delta$ and $E$ are chosen smaller than appropriate geometric constants, there are $\beta>0$ and $C$ such that the following holds
\begin{itemize}
\item[(i)] If $L\in \mathscr{C}_k$, then
\begin{equation}\label{e:est1}
\sum_{i=1}^3 \|D^i g_L\|_{C^0 (B_{2 M_0 \ell (L)} (x_L))} \leq C E^{\frac{1}{2}}\qquad \|D^4 g_L\|_{C^0} \leq C 2^{(1-\beta)k} E^{\frac{1}{2}}\, .
\end{equation}
\item[(ii)] If $K\in \mathscr{C}_j$ is an ancestor of $L$, then
\begin{equation}\label{e:inter}
\sum_{i=0}^4 2^{(3+\beta-i) j} \|D^i (g_L-g_K)\|_{C^0 (B_{2 M_0 \ell (L)} (x_L))} \leq C  E^{\frac{1}{2}}\, .
\end{equation}
\item[(iii)] If $K, L\in \mathscr{C}_k$ are neighbors, then the estimate \eqref{e:inter} holds for $g_L -g_K$ on its domain of definition
$B_{2M_0 \ell (L)} (x_L) \cap B_{2M(0) \ell (L)} (x_K)$.
\end{itemize}
\end{myprop}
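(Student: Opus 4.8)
The plan is to prove (i), (ii) and (iii) simultaneously by induction on the scale $k$, deriving the \emph{absolute} bounds (i) at scale $k$ from the \emph{comparison} bounds (ii)--(iii) relating scale $k$ to scale $k-1$, and propagating (ii) to far ancestors by telescoping. The base of the induction is the coarsest scale $k=N_0$: there $\ell(L)$ is comparable to $1$, $\bE(\gr(u),\bB_L)\le CE$, and the bounds follow from elementary estimates on the mollification $z_L$ together with Lemma~\ref{l:first_rotation} for the reparametrisation onto $\pi_0$. In the inductive step, for $L\in\mathscr C_k$ with father $K\in\mathscr C_{k-1}$ (and, in parallel, for neighbours $K\in\mathscr C_k$) we first establish the comparison estimate \eqref{e:inter}, and then (i) for $g_L$ follows at once from $\|D^ig_L\|_{C^0}\le\|D^ig_K\|_{C^0}+\|D^i(g_L-g_K)\|_{C^0}$ and the inductive validity of (i) for $g_K$; the case of a general ancestor $K\in\mathscr C_j$ in (ii) is then obtained for $i\le 3$ by summing the father-to-son estimates along the chain $L_k=L\supset L_{k-1}\supset\cdots\supset L_j=K$, which is a convergent geometric series since $3+\beta-i>0$, while the top-order behaviour is accounted for by the (growing) bound $\|D^4g_L\|_{C^0}\le C2^{(1-\beta)k}E^{\frac12}$.

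\textbf{The single-scale (harmonic) estimate.} Fix $L$ and work in the optimal coordinates $\pi_L$. The inputs are: $\bE(\gr(u),\bB_L)\le C\ell(L)^{2-2\delta}E$ (De~Giorgi's decay, \eqref{e:ex_decay_100}), so $E(L)\le C\ell(L)^{2-2\delta}E$; the $\pi_L$-approximation $f_L$ satisfies $\Lip(f_L)\le CE(L)^\gamma$, $\int|Df_L|^2\le C\ell(L)^mE(L)$, and coincides with $v_L$ off a set of measure $\le C\ell(L)^mE(L)^{1+\gamma}$ by Proposition~\ref{p:lip_approx} and \eqref{e:alessio_richiama_1}; and $\gr(v_L)$ is area minimising. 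Running the cut-and-paste construction with the Taylor expansion of the area integrand exactly as in the proofs of Propositions~\ref{p:lip_approx} and \ref{p:harm_bu}, but in quantitative form, produces a harmonic function $h_L$ on a slightly smaller ball with $\int|Df_L-Dh_L|^2\le C\ell(L)^mE(L)^{1+\gamma'}$ for some $\gamma'>0$ — the super-linear exponent coming from the quadratic remainder of the area integrand and from the fact that $\Lip(f_L)$ is already small. Since $h_L$ is harmonic, interior estimates (and Lemma~\ref{l:harmonic}) bound every $\|D^{i+1}h_L\|_{C^0}$ on a further shrinking by $C\ell(L)^{-i}\big(\ell(L)^{-m}\int|Dh_L|^2\big)^{1/2}$; the tilted interpolating function $z_L=f_L\ast\varphi_{\ell(L)}$ inherits these bounds up to the mollification error, and the reparametrisation onto $\pi_0$ adds only the tilt $|\vec\pi_L-\vec\pi_0|\le CE^{\frac12}$ from \eqref{e:quasi_orizzontale}. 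By itself this single-scale information carries \emph{negative} powers of $\ell(L)$ in the higher derivatives, so it is not yet (i); the point of the induction is precisely that those negative powers cancel between consecutive scales.

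\textbf{The comparison estimate.} For the father $K\supset L$ (respectively a neighbour $K$), Proposition~\ref{p:compare1} together with \eqref{e:ex_decay_100} gives $|\vec\pi_L-\vec\pi_K|\le C\ell(K)^{1-\delta}E^{\frac12}$; hence $f_L$ and $f_K$ describe the \emph{same} graph $\gr(u)$ in coordinates differing only by this small rotation, and away from the bad sets they agree up to that rotation. Consequently the harmonic approximations $h_L$ and $h_K$ have nearly identical boundary data on the smaller ball, so $h_L-h_K$ is harmonic on the overlap and is controlled in $C^0$ by the tilt $\ell(K)|\vec\pi_L-\vec\pi_K|$, plus the two harmonic-approximation errors, plus the mollification errors, each derivative costing one power of $\ell(K)$ via interior estimates; it is here that the super-linear gain $E(L)^{1+\gamma'}$ and the inductively known $C^3$-bound on $g_K$ are leveraged to reach the exponents $3+\beta-i$ of \eqref{e:inter}. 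Transferring the bound on $h_L-h_K$ back through the mollifications and the reparametrisations onto $\pi_0$ yields \eqref{e:inter} for $g_L-g_K$. For neighbours one argues identically, using the estimate \eqref{e:q-p} of Proposition~\ref{p:compare1} to compare $\vec\pi_L$ and $\vec\pi_K$ and working on the common domain $B_{2M_0\ell(L)}(x_L)\cap B_{2M_0\ell(L)}(x_K)$.

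\textbf{Main obstacle.} The heart of the matter — and the only place where minimality enters in an essential, Schauder-free way — is upgrading the \emph{qualitative} harmonic blow-up of Proposition~\ref{p:harm_bu} into a \emph{quantitative} harmonic approximation with a super-linearly small error, and then ordering the many exponents ($\delta$, $\gamma$, $\gamma'$, the mollification parameter, and finally $\beta$) so that the telescoped errors sum to exactly the powers $3+\beta-i$ rather than accumulating. A secondary nuisance is that $f_L$ is only Lipschitz: passing from $W^{1,2}$-closeness of $Df_L$ and $Dh_L$ to the $C^0$-closeness needed to match boundary data across scales must be done without naively interpolating against the Lipschitz bound (which would cost powers of $E$), exploiting instead that $f_L=v_L$ on a large set and the minimality-improved energy estimates. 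Once Proposition~\ref{p:key_estimates} is in hand, part (a) of Theorem~\ref{t:almgren2} is a Leibniz-rule computation: the $\vartheta_L$ form a locally finite partition of unity with $\|D^i\vartheta_L\|_{C^0}\le C\ell(L)^{-i}$, at each point only boundedly many neighbours contribute, and \eqref{e:inter} supplies exactly the cancellations needed to absorb the negative powers of $\ell(L)$ produced by differentiating the $\vartheta_L$.
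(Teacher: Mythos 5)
Your multi-scale telescoping skeleton has broadly the right shape, but the two central analytic steps are not the paper's and, as described, neither would close. First, the single-scale input you posit --- a harmonic $h_L$ with $\int|Df_L-Dh_L|^2\le C\ell(L)^mE(L)^{1+\gamma'}$ obtained by ``running the cut-and-paste construction in quantitative form'' --- is not available from Proposition \ref{p:harm_bu}: the compactness/blow-up argument only yields an $o(E)$ error, and upgrading this to a power gain $E^{1+\gamma'}$ is a genuinely harder statement (of higher-integrability type) that the paper never proves and never needs. The paper's Proposition \ref{p:key_prop} avoids harmonic replacement altogether: minimality gives the exact first-variation identity $\delta \gr (v)(\kappa)=0$, and the Taylor expansion of the derivative of the area integrand plus the smallness of $\{v\neq\bar f\}$ turn this into the distributional almost-harmonicity \eqref{e:fondamentale}; hence the mollification $\bar z=\bar f\ast\varphi_{\ell(L)}$ satisfies $\|\Delta D^j\bar z\|_{C^0}\le C E\,\ell(L)^{1-j+2\beta}$ and $\|\bar z-\bar f\|_{L^1}\le CE\,\ell(L)^{m+3+2\beta}$. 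Part (i) then follows by telescoping the functions $\bar z_j$ along the whole ancestry of $L$ \emph{in the single fixed coordinate system $\pi_L$}, applying elliptic estimates and interpolation to each increment $\bar z_j-\bar z_{j+1}$, and only at the very end rotating to $\pi_0$ via Lemma \ref{l:rotation_complex}.

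Second, your mechanism for the father--son and neighbour comparison cannot reach the exponents of \eqref{e:inter}. Controlling $h_L-h_K$ ``by the tilt $\ell(K)|\vec{\pi}_L-\vec{\pi}_K|$'' gives at best $CE^{\frac12}\ell(K)^{2-\delta}$, whereas \eqref{e:inter} with $i=0$ demands $CE^{\frac12}\ell(K)^{3+\beta}$ --- a deficit of more than a full power of $\ell(K)$. The correct source of smallness is not the tilt: it is that $\gr(f_L)$ and $\gr(f_K)$ both coincide with $\gr(u)$ off sets of measure $CE^{1+\gamma}\ell^{m+2+2\beta}$, so that after reparametrising to a common plane the two interpolations satisfy the $L^1$ bound $\|g_L-g_K\|_{L^1}\le CE\,\ell(K)^{m+3+\beta}$ (Lemma \ref{l:interaction}); this is then combined with the $C^{3,\beta}$ control \emph{already furnished by part (i)} and with the $L^1$-versus-$C^{3,\kappa}$ interpolation inequality of Lemma \ref{l:interpolation_bis} to produce all the $C^0$ bounds in \eqref{e:inter}. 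In particular the logical order is the reverse of yours: (ii) and (iii) are deduced from (i), and the induction ``comparison $\Rightarrow$ absolute bound'' cannot be set up, because the comparison estimate has no proof that does not already use the absolute $C^{3,\beta}$ bound.
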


\begin{proof}[Proof of Estimate (a) in Theorem \ref{t:almgren2}] Fix $k \geq N_0$ and define
\[
\theta_L := \frac{\vartheta_L}{\sum_{J\in \mathscr{C}_k} \vartheta_J}\, .
\]
Observe that we have the obvious estimates
\[
\|D^i \theta_L\|_{C^0} \leq C (i) 2^{ik} \qquad \qquad \forall L\in \mathscr{C}_k\, .
\]
Consider now one $L\in \mathscr{C}_k$ and let $\mathscr{N} (L)$ be the set of its neighbors. It is then obvious that
\[
(\zeta_k - g_L) = \sum_{K\in \mathscr{N} (L)} \theta_K (g_K - g_L)\, .
\] 
In particular for every $i \in \{1,2,3\}$ we have\footnote{In the estimates we have used the obvious geometric fact that the cardinality of $\mathscr{N} (L)$ is bounded by a geometric constant $C(m)$.}
\begin{align}
\|D^i \zeta_k\|_{C^0 (L)} &\leq \|D^i g_L\|_{C^0} + \sum_{K\in \mathscr{N} (L)} \|D^i (\theta_K (g_K- g_L))\|_{C^0 (L)}\nonumber\\
&\leq C E^{\frac{1}{2}} + C \sum_{K\in \mathscr{N} (L)} \sum_{0\leq i \leq j} \|D^j \theta_K\|_{C^0} \|D^{i-j} (g_K-g_L)\|_{C^0 (L)}\nonumber\\
&\leq C E^{\frac{1}{2}} +C E^{\frac{1}{2}} \sum_{0\leq i\leq j} 2^{jk} 2^{(i-j-3-\beta)k}\leq C E^{\frac{1}{2}}\, .
\end{align}
In particular 
\[
\|D\zeta_k\|_{C^2} \leq C E^{\frac{1}{2}}\, .
\]
Note that a very similar computations yields
\[
\|D^4 \zeta_k\|_{C^0} \leq C E^{\frac{1}{2}} 2^{(1-\beta) k}\, .
\]
In particular, if $x,y\in L$ we easily conclude
\begin{equation}\label{e:near}
|D^3 \zeta_k (x) - D^3 \zeta_k (y)|\leq |x-y| \|D^4 \zeta_k\|_{C^0}
\leq C E^{\frac{1}{2}} |x-y|^\beta\, .
\end{equation}
Consider next the centers $x_L$ and $x_K$ of two different cubes $K, L\in \mathscr{C}_k$ and assume for the moment that $J$ is the {\em first} common ancestor of both $L$ and $K$.
Observe that $\ell (J)\leq C |x_L-x_K|$. Moreover, by construction $\zeta_k$ equals $g_L$ in a neighborhood of $x_L$ and equals $g_K$ in a neighborhood of $x_K$. We thus can estimate
\begin{align}
& |D^3 \zeta_k (x_L) - D^3 \zeta_k (x_K)| =
|D^3 g_L (x_L) - D^3 g_K (x_K)|\nonumber\\
\leq\; & |D^3 g_L (x_L) - D^3 g_J (x_L)| + |D^3 g_J (x_L) - D^3 g_J (x_K)|\nonumber\\
&\qquad + |D^3 g_J (x_K) - D^3 g_K (x_K)|\nonumber\\
\leq\; & \|D^3 (g_L - g_J)\|_{C^0 (L)} + \sqrt{m}\, \ell (J)\, \|D^4 g_J\|_{C^0 (J)}
+ \|D^3 (g_J- g_K)\|_{C^0 (K)}\nonumber\\
\leq\; & C E^{\frac{1}{2}} \ell (J)^\beta \leq C E^{\frac{1}{2}} |x_L-x_K|^\beta\, .\label{e:far}
\end{align}
Combining \eqref{e:near} with \eqref{e:far} we easily conclude that, for any $j\leq k$ and any cube $M \in \mathscr{C}_j$ we have\footnote{As it is customary in the PDE literature, $[f]_{\alpha, F}$ denotes the H\"older seminorm of the function $f$ on the subset $F$ of its domain, namely
\[
[f]_{\alpha, F} := \sup_{x\neq y, x,y\in F} \frac{|f(x)-f(y)|}{|x-y|^\alpha}\, .
\]}
\[
[D^3 \zeta_k]_{\beta, M} \leq C E^{\frac{1}{2}}\, .
\]
In particular the latter estimate holds for every cube $J\in \mathscr{C}_{N_0}$. Since however $\mathscr{C}_{N_0}$ covers $[-\sigma, \sigma]^m$ and consists of $2^{kN_0}$ cubes, we finally get 
\[
[D^3 \zeta_k]_{\beta, [-\sigma, \sigma]^m}\leq C E^{\frac{1}{2}}\,. 
\]
\end{proof}

\subsection{Changing coordinates.} We will see in the next section that much of the estimates leading to Proposition
\ref{p:key_estimates} will in fact be carried on in the ``tilted'' systems of coordinates. For this reason we will make
heavy use of the following techincal lemma.

\begin{mylem}\label{l:rotation_complex}
There are constants $c_0,C>0$ with the following properties.
Assume that
\begin{itemize}
\item[(i)] $A\in SO(m+n)$, $|A-{\rm Id}|\leq c_0$, $r\leq 1$;
\item[(ii)] $(x_0, y_0)\in\pi_0\times\pi_0^\perp$ are given and $f,g: B^m_{2r} (x_0)\to \mathbb R^n$ are Lipschitz functions such that
\begin{equation*}
\Lip (f), \Lip (g) \leq c_0\quad\text{and}\quad |f(x_0)-y_0|+|g(x_0)-y_0|\leq c_0\, r.
\end{equation*}
\end{itemize}
Then, in the system of coordinates $(x',y')= A (x,y)$, for $(x_1,y_1) = A (x_0,y_0)$, the following holds:
\begin{itemize}
\item[(a)] $\gr (f)$ and $\gr (g)$ are the graphs of two Lipschitz functions $f'$ and $g'$ in the tilted system of coordinates, whose domains of definition contain both $B_{r} (x_1)$;
\item[(b)] $\|f'-g'\|_{L^1 (B_{r} (x_1))}\leq C\,\|f-g\|_{L^1 (B_{2r} (x_0))}$;
\item[(c)] if $f\in C^4 (B_{2r} (x_0))$, then $f'\in C^4 (B_{r} (x_1))$, with the
estimates 
\begin{eqnarray}
\|f'- y_1\|_{C^3}&\leq& \Phi \left(|A-{\rm Id}|, \|f-y_0\|_{C^3}\label{e:est_C3}\right)\, ,\\
\|D^4 f'\|_{C^0} &\leq& \Psi \left(|A-{\rm Id}|, \|f-y_0\|_{C^3}\right) 
\left(1+ \|D^4 f\|_{C^0}\right)\, ,
\label{e:est_C4}
\end{eqnarray}
where $\Phi$ and $\Psi$ are smooth functions.
\end{itemize}
\end{mylem}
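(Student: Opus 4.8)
The plan is to realize (a)--(c) through a single \emph{base-change map}. Decompose $A$ in blocks with respect to $\mathbb R^{m+n}=\pi_0\times\pi_0^\perp$, writing $A(x,y)=(A_{11}x+A_{12}y,\,A_{21}x+A_{22}y)$; the hypothesis $|A-{\rm Id}|\le c_0$ gives $|A_{11}-{\rm Id}|+|A_{22}-{\rm Id}|+|A_{12}|+|A_{21}|\le Cc_0$. For a Lipschitz $h\colon B_{2r}(x_0)\to\mathbb R^n$ with $\Lip(h)\le c_0$, set $\Theta_h(x):=A_{11}x+A_{12}h(x)$, so that $A(\gr(h))=\{(\Theta_h(x),\,A_{21}x+A_{22}h(x)):x\in B_{2r}(x_0)\}$. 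Since $|\Theta_h(x)-\Theta_h(\tilde x)-(x-\tilde x)|\le Cc_0|x-\tilde x|$, the map $\Theta_h$ is a bi-Lipschitz perturbation of the identity, and everything will be read off from it.

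First I would prove (a) by a quantitative inverse function (contraction mapping) argument: $\Theta_h$ is injective on $B_{2r}(x_0)$, $\Theta_h(B_{2r}(x_0))\supset B_{(1-Cc_0)2r}(\Theta_h(x_0))$, and $\Theta_h^{-1}$ is Lipschitz with constant $\le(1-Cc_0)^{-1}$. From $x_1=A_{11}x_0+A_{12}y_0$ and $|h(x_0)-y_0|\le c_0r$ one gets $|\Theta_h(x_0)-x_1|\le c_0^2r$, so for $c_0$ small enough $B_r(x_1)\subset\Theta_h(B_{2r}(x_0))$ and $\Theta_h^{-1}(B_r(x_1))\subset B_{2r}(x_0)$. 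Applying this with $h=f$ and $h=g$ produces the two graphs, with $f':=A_{21}\Theta_f^{-1}+A_{22}(f\circ\Theta_f^{-1})$ (and likewise $g'$), both Lipschitz and with domain of definition containing $B_r(x_1)$.

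For (b) I would pull everything back to the base. Given $x'\in B_r(x_1)$, set $z:=\Theta_f^{-1}(x')$ and $w:=\Theta_g^{-1}(x')$, both in $B_{2r}(x_0)$, so that $(x',f'(x'))=A(z,f(z))$ and $(x',g'(x'))=A(w,g(w))$. Equating the $\pi_0$-components yields $A_{11}(z-w)=A_{12}(g(w)-f(z))$, whence $|z-w|\le C|f(z)-g(z)|$ after absorbing the $Cc_0|z-w|$ term; then the identity $f'(x')-g'(x')=A_{21}(z-w)+A_{22}(f(z)-g(z))+A_{22}(g(z)-g(w))$ together with $\Lip(g)\le c_0$ gives $|f'(x')-g'(x')|\le C|f(z)-g(z)|$. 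Changing variables $x'=\Theta_f(z)$, whose Jacobian is bounded above and below, and using $\Theta_f^{-1}(B_r(x_1))\subset B_{2r}(x_0)$, one obtains $\int_{B_r(x_1)}|f'-g'|\le C\int_{B_{2r}(x_0)}|f-g|$.

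Finally, for (c), once $f\in C^4$ the map $\Theta_f$ is $C^4$ with $D\Theta_f=A_{11}+A_{12}Df$ invertible and $Cc_0$-close to ${\rm Id}$, so the classical inverse function theorem makes $\Theta_f^{-1}\in C^4$ and hence $f'\in C^4$. Because $y_1=A_{21}x_0+A_{22}y_0$ the translation cancels: $f'(x')-y_1=A_{21}(\Theta_f^{-1}(x')-x_0)+A_{22}(f(\Theta_f^{-1}(x'))-y_0)$, so its $C^0$-norm is controlled (using $r\le1$); then, differentiating and using that matrix inversion and composition are smooth operations on the relevant norms, the chain rule expresses $D^if'$ for $i\le3$ through $A-{\rm Id}$ and the derivatives of $f$ up to order $3$ via the derivatives of $\Theta_f^{-1}$, which are themselves polynomial in $(D\Theta_f)^{-1}$ and $D^2\Theta_f=A_{12}D^2f,\dots,D^3\Theta_f=A_{12}D^3f$; this is \eqref{e:est_C3}. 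One more differentiation: every contribution to $D^4f'$ either involves only derivatives of $f$ of order $\le3$ --- hence is bounded by a smooth function of $|A-{\rm Id}|$ and $\|f-y_0\|_{C^3}$ --- or involves $D^4f$ at most linearly (the term $A_{22}(D^4f\circ\Theta_f^{-1})(D\Theta_f^{-1})^{\otimes4}$ and the terms containing $D^4\Theta_f^{-1}$, which itself is affine in $D^4f$), with coefficients again controlled by $|A-{\rm Id}|$ and $\|f-y_0\|_{C^3}$; this gives \eqref{e:est_C4}. I expect the only genuinely delicate point to be this last bookkeeping --- checking that after all the chain-rule expansions $D^4f'$ depends affinely on $D^4f$ with coefficients that are smooth functions of $(|A-{\rm Id}|,\|f-y_0\|_{C^3})$ alone. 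The inverse function argument of (a)--(b) is standard, but one must make sure that all the constants are geometric (using $r\le1$) and that the nested inclusions $B_r(x_1)\subset\Theta_f(B_{2r}(x_0))$ and $\Theta_f^{-1}(B_r(x_1))\subset B_{2r}(x_0)$ hold simultaneously for a single choice of $c_0$.
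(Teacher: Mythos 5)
Your argument is correct and is essentially the paper's own proof written in block-matrix coordinates: your $\Theta_h$ is exactly the projection map $I$ (resp.\ $J$) that the paper composes with to define $f'=F\circ I^{-1}$, and your treatment of (a) and (c) via the quantitative inverse function theorem matches the paper's. In fact you supply the two details the paper leaves to the reader --- the pointwise comparison $|f'(x')-g'(x')|\leq C\,|f(z)-g(z)|$ behind (b) and the chain-rule bookkeeping showing $D^4f'$ depends affinely on $D^4f$ --- and both are carried out correctly.
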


\begin{proof} Let $P: \mathbb R^{m\times n}\to \mathbb R^{m}$ and
$Q: \mathbb R^{m\times n}\to \mathbb R^{n}$ be the usual orthogonal projections.
Set $\pi=A(\pi_0)$ and
consider the maps $F, G: B_{2r} (x_0)\to \pi^\perp$ and $I, J: B_{2r} (x_0)\to \pi$
given by
\[
F (x) = Q (x,f(x))\quad\text{and}\quad G(x) = Q (x, g(x)),
\]
\[
I(x)= P (x, f(x))\quad\text{and}\quad J (x) = P (x, g(x)).
\]
Obviously, if $c_0$ is sufficiently small, $I$ and $J$ are injective Lipschitz maps.
Hence, $\gr (f)$ and $\gr (g)$ coincide, in the $(x',y')$ coordinates, with the graphs of the functions
$f'$ and $g'$ defined respectively in $D:= I (B_{2r} (x_0))$ and $\tilde{D}:= J (B_{2r} (x_0))$
by $f' = F \circ I^{-1}$ and $g'= G \circ J^{-1}$.
If $c_0$ is chosen sufficiently small, then we can find a constant $C$
such that 
\begin{equation}\label{e:Lip_bound}
\Lip (I), \; \Lip (J),\; \Lip (I^{-1}),\; \Lip (J^{-1}) \leq 1+C\,c_0,
\end{equation}
and
\begin{equation}\label{e:Linfty_bound}
|I (x_0)-x_1|, |J (x_0)-x_1|\leq C\,c_0\, r.
\end{equation}

Clearly, \eqref{e:Lip_bound} and \eqref{e:Linfty_bound} easily imply (a).
Conclusion (c) is a simple consequence of the inverse function theorem.
Finally we claim that, for small $c_0$,
\begin{equation}\label{e:claim}
|f'(x')-g'(x')|\leq 2 \,|f (I^{-1} (x')) - g (I^{-1} (x'))|
\quad\forall \;x'\in B_r(x_1),
\end{equation}
from which,
using the change of variables formula for biLipschitz homeomorphisms
and \eqref{e:Lip_bound}, (b) follows. Claim \eqref{e:claim} is an elementary exercise
in classical euclidean geometry and it is left to reader.
\end{proof}

\section{$C^{3,\beta}$ estimates}

\subsection{Key estimates on the tilted interpolation.} In this section we finally come to the
core PDE argument which will allow us to derive the estimates of Proposition \ref{p:key_estimates}.
We consider however a slightly more general situation. We fix $L\in \mathscr{C}_k$ but consider any plane $\pi$ such that in the corresponding cylinder $\bC_{16 M_0 \ell (L)} (p_L, \pi)$ we have the estimate\footnote{In particular we have already shown that $\pi_L$ falls in such category, but we will need to consider system of coordinates (and in particular cylinders) where the ``base plane'' $\pi$ might be tilted, compared to $\pi_L$, by the same amount which estimates the tilt between $\pi_L$ and the horizontal plane $\pi_0$.}
\begin{equation}\label{e:plane_OK}
\bar{E} := \ell (L)^{-m} \bE (\gr (u), \bC_{16M_0 \ell (L)} (p_L, \pi)) \leq C E \ell(L)^{2-2\delta}\, .
\end{equation}
We then let $\bar{f}$ be the $\bar{E}^\gamma$-Lipschitz approximation and set $\bar{z} := \bar{f} \ast \varphi_{\ell (L)}$.

\begin{myprop}\label{p:key_prop}
If $\delta$ and $E$ are sufficiently small, there is $\beta >0$ and $C, C (j)$ geometric constants such that
\begin{align}
\|\bar{z} - \bar{f}\|_{L^1 (B_{4 M_0 \ell (L)} (p_L, \pi))} &\leq C E \ell (L)^{m+3+2\beta}\label{e:L1}\\
\|\Delta D^j \bar{z}\|_{C^0 (B_{4 M_0 \ell (L)} (p_L, \pi))} & \leq C (j)E \ell (L)^{1-j+2\beta} \qquad \forall j\in \mathbb N\, .\label{e:Delta-Linf}
\end{align}
\end{myprop}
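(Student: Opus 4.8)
The plan is to extract almost harmonicity of $\bar f$ from the minimality and use it twice: once to bound $\Delta\bar z$ and once to bound the mollification error $\bar z-\bar f$. I identify $\pi$ with $\mathbb R^m$, abbreviate $\ell:=\ell(L)$, and work on the nested disks $B_{4M_0\ell}(p_L,\pi)\subset B_{5M_0\ell}(p_L,\pi)\subset\cdots\subset B_{16M_0\ell}(p_L,\pi)$; since convolution with $\varphi_\ell$ displaces points by at most $\ell$, any quantity evaluated on $B_{4M_0\ell}$ only involves $\bar f$ on $B_{5M_0\ell}$. I record two preliminary facts. First, if $\bar v$ is the graphing function of $\gr(u)$ in $\bC_{16M_0\ell}(p_L,\pi)$, then applying Corollary \ref{c:excess_decay} inside $\bB_L$ together with Proposition \ref{p:compare2} exactly as in the proof of Theorem \ref{t:de_giorgi} gives $\|D\bar v\|_{C^0(B_{7M_0\ell}(p_L,\pi))}\le C\bar E^{1/2}$ (the $L^2$--mean term being controlled by $\bar E$ because $\pi$ is the base plane of $\bar v$). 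Since $\bar E^{1/2}$ is far below the threshold $\bar E^\gamma$ of Proposition \ref{p:lip_approx}, its bad set is empty on $B_{7M_0\ell}$ and there $\bar f=\bar v$. Second, $\gr(\bar f)$ is area minimizing (Definition \ref{d:AM}; $\pi$ differs from $\pi_0$ by $O(E^{1/2})\ll c_0$ by Proposition \ref{p:compare1}), so its first variation vanishes: writing the integrand of \eqref{e:area_formula} as $1+\tfrac12|p|^2+O(|p|^4)$, its $p$--gradient is $M(p)=p+R(p)$ with $R$ smooth and $R(p)=O(|p|^3)$, hence $\int M(D\bar f):D\psi=0$ for every $\psi\in C^\infty_c\bigl(B_{16M_0\ell}(p_L,\pi),\mathbb R^n\bigr)$. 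Equivalently $\Delta\bar f=-\operatorname{div}\bigl(R(D\bar f)\bigr)$ in the sense of distributions, and $\|R(D\bar f)\|_{C^0(B_{7M_0\ell})}\le C\|D\bar f\|_{C^0}^3\le C\bar E^{3/2}$.

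Estimate \eqref{e:Delta-Linf} then follows at once. Since convolution commutes with differentiation, for $x\in B_{4M_0\ell}$ (so that $\varphi_\ell(x-\cdot)\in C^\infty_c(B_{16M_0\ell})$ is an admissible test field) one has $\Delta D^j\bar z=D^j\bigl((\Delta\bar f)\ast\varphi_\ell\bigr)=-\sum_i R_i(D\bar f)\ast D^j\partial_i\varphi_\ell$, hence $\|\Delta D^j\bar z\|_{C^0(B_{4M_0\ell})}\le\|R(D\bar f)\|_{C^0(B_{5M_0\ell})}\,\|D^{j+1}\varphi_\ell\|_{L^1}\le C(j)\,\bar E^{3/2}\,\ell^{-j-1}$. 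Inserting $\bar E\le CE\ell^{2-2\delta}$ from \eqref{e:plane_OK} yields the bound $C(j)\,E^{3/2}\ell^{\,2-j-3\delta}$, which is $\le C(j)\,E\,\ell^{\,1-j+2\beta}$ provided $\delta$ and $\beta$ are small enough that $1-3\delta-2\beta\ge0$ (then $E^{1/2}\ell^{\,1-3\delta-2\beta}\le1$, since $E<\varepsilon_0<1$ and $\ell<1$).

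For \eqref{e:L1} the naive estimate $\|\bar z-\bar f\|_{C^0}\le C\ell\,\|D\bar f\|_{C^0}\le C\ell\bar E^{1/2}$ is a factor $\bar E$ too weak; the extra power comes from almost harmonicity, exploited via the radial symmetry of the mollifier $\varphi$. For $x\in B_{4M_0\ell}$, passing to polar coordinates, $\bar z(x)-\bar f(x)$ is a weighted average over $\rho\in(0,\ell)$ of $\bigl(\text{mean of }\bar f\text{ over }\partial B_\rho(x)\bigr)-\bar f(x)$; the latter difference equals $\int_0^\rho \bigl(\Vol^{m-1}(\partial B_t)\bigr)^{-1}\langle\Delta\bar f,\mathbf{1}_{B_t(x)}\rangle\,dt$, and by $\Delta\bar f=-\operatorname{div}(R(D\bar f))$ and the divergence theorem its absolute value is $\le\rho\,\|R(D\bar f)\|_{C^0}$ (it would vanish if $\bar f$ were harmonic). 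Therefore $\|\bar z-\bar f\|_{C^0(B_{4M_0\ell})}\le C\ell\,\|R(D\bar f)\|_{C^0(B_{5M_0\ell})}\le C\ell\bar E^{3/2}$, whence $\|\bar z-\bar f\|_{L^1(B_{4M_0\ell})}\le C\ell^{m+1}\bar E^{3/2}\le CE^{3/2}\ell^{\,m+4-3\delta}\le CE\,\ell^{\,m+3+2\beta}$, again because $E^{1/2}\ell^{\,1-3\delta-2\beta}\le1$.

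The only delicate point is obtaining the power $\bar E^{3/2}$ rather than the cheap $\bar E^{1/2}$, and it hinges on three ingredients together: the Euler--Lagrange equation, so that $\Delta\bar f$ is the divergence of something \emph{cubic} in $D\bar f$ (which in turn uses that the area integrand has no cubic term); the $C^0$ bound $\|D\bar f\|_{C^0}\lesssim\bar E^{1/2}$ coming from De Giorgi's excess decay, which upgrades ``cubic in $D\bar f$'' to $\bar E^{3/2}$; and the radial symmetry of $\varphi$ in \eqref{e:L1}. The reduction $\bar f=\bar v$, so that $\bar f$ genuinely solves the Euler--Lagrange equation, is what makes the argument this clean in the one-sheeted situation; in the general multi-sheeted setting one runs the identical computation with a weak Euler--Lagrange identity that holds up to errors supported on a small bad set.
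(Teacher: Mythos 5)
Your proof is correct, but it takes a genuinely different route from the paper's. You invoke the full strength of the $\varepsilon$-regularity theory (Corollary \ref{c:excess_decay} plus Proposition \ref{p:compare2}, i.e.\ the proof of Theorem \ref{t:de_giorgi} run at scale $\ell(L)$) to get the \emph{pointwise} bound $\|D\bar v\|_{C^0}\leq C\bar E^{1/2}$, which empties the bad set so that $\bar f=\bar v$ solves the Euler--Lagrange system exactly, with $\Delta\bar f=-\operatorname{div}(R(D\bar f))$ and $\|R(D\bar f)\|_{C^0}\leq C\bar E^{3/2}$; the gain over the naive estimate is then a full power $\bar E$ of the excess, and the parameter constraint is merely $3\delta+2\beta\leq 1$. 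The paper deliberately avoids this: it only uses the measure bound \eqref{e:alessio_richiama_1} on the set $\{\bar f\neq v\}$ together with $\Lip(\bar f)\leq CE(L)^\gamma$ and $\int|D\bar f|^2\leq C\bar E\,\ell(L)^m$, arriving at the weaker almost-harmonicity \eqref{e:fondamentale}--\eqref{e:fondamentale2} whose gain is only $E^\gamma\ell(L)^{(2-2\delta)\gamma}$ (whence the constraint $(1+\gamma)(2-2\delta)=2+2\beta$). What the paper's version buys is exactly robustness: in the multi-sheeted setting there is no pointwise gradient bound and no exact Euler--Lagrange equation for the approximation, so the ``identical computation with a weak identity'' you allude to at the end \emph{is} the paper's proof, and the generalization is less immediate than you suggest. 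For \eqref{e:L1} you also replace the paper's representation $\bar z-\bar f=\int\nabla\bar f\cdot\Phi(\cdot-x)$ (with $\Phi$ a compactly supported gradient field of $L^1$-norm $C\ell(L)$, fed into the $L^1$-in-$x$ estimate \eqref{e:fondamentale2}) by the pointwise deviation from the mean value property; both hinge on the radial symmetry of $\varphi$ and are essentially equivalent here, but only the paper's survives when $R(D\bar f)$ is merely controlled in $L^1$. Three small points to tidy up: the first variation identity holds only for test functions supported where $\bar f=\bar v$, so restrict to $C^\infty_c(B_{7M_0\ell(L)}(p_L,\pi))$ rather than $B_{16M_0\ell(L)}$ (this suffices, since $\varphi_{\ell(L)}(x-\cdot)$ for $x\in B_{4M_0\ell(L)}$ is supported there); ``the bad set is empty'' should be phrased as ``one may take $\bar f=\bar v$,'' since Definition \ref{d:Lip} leaves the extension non-unique; and the identity $\fint_{\partial B_\rho}\bar f-\bar f(x)=\int_0^\rho|\partial B_t|^{-1}\langle\Delta\bar f,\mathbf{1}_{B_t}\rangle\,dt$ for a Lipschitz $\bar f$ with distributional Laplacian $-\operatorname{div}(R(D\bar f))$ needs a one-line justification by testing against smooth approximations of $\mathbf{1}_{B_t}$.
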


\begin{proof} In order to simplify the notation we drop $p_L$ and $\pi$ and simply write $B_s$ for $B_s (p_L, \pi)$.
Let $v$ be the function whose graph describes $\gr (u)$ in the $\pi\times \pi^\perp$ coordinates. 
Fix a test function $\kappa\in C^\infty_c (B_{8 M_0 \ell (L)})$ and consider the first variation of the area functional along $\kappa$. By minimality of $v$
\[
0 = [\delta \gr (v)] (\kappa) = \left. \frac{d}{ds}\right|_{s=0} \int \sqrt{1+ |Dv+ s D\kappa|^2 
+ \sum_{|\alpha|\geq 2} M_{\alpha\lambda} (Dv+ sD\kappa)^2} \, .
\]
Moreover, since $v$ and $\bar{f}$ coincide aside from a set of measure no larger than $C E^{1+\gamma} \ell(L)^{m+ (1+\gamma) (2-2\delta)}$, cf. \eqref{e:alessio_richiama_1} and \eqref{e:alessio_richiama_2}, we easily conclude 
\[
|[\delta \gr (\bar f)] (\kappa)|\leq C E^{1+\gamma} \ell(L)^{m+ (1+\gamma)(2-2\delta)}\|D \kappa\|_{C^0} \, .
\]
Finally we use an explicit computation and a simple Taylor expansion\footnote{The Taylor expansion is the expansion of $D_A F$, where 
\[
F (A) = \left(1+|A|^2 + \sum_{|\alpha|\geq 2} (\det M_{\alpha\beta} (A))^2\right)^\frac{1}{2}
\]
is the area integrand. It is then elementary that, if $A_{ij}$ denotes the $ij$-entry of the matrix $A$, then
\[
\partial_{A_{ij}} F (A) = A_{ij} + O (|A|^3)\, .
\]} 
to derive
\begin{align*}
\left|\delta \gr (\bar f) (\kappa) - \int D \bar f : D\bar \kappa\right| &\leq C \int |D\bar f|^3 |D\kappa|
\leq C E^\gamma \ell (L)^{(2-2\delta)\gamma} \|D\kappa\|_{C^0} \int |D\bar f|^2\\
&\leq C E^{1+\gamma} \ell(L)^{m+ (1+\gamma)(2-2\delta)}\|D \kappa\|_{C^0}\, 
\end{align*}
(where $A:B$ denotes the Hilbert-Schmidt scalar product of the matrices $A$ and $B$, namely $A:B = {\rm tr}\, (A^TB)$).

We next impose that $\delta$ is sufficiently small, so that 
\begin{equation}\label{e:define_beta}
(1+\gamma)(2-2\delta) = 2+2\beta
\end{equation} 
for a positive $\beta$. 
In particular we conclude
\begin{equation}\label{e:fondamentale}
\left|\int D \bar f : D\bar \kappa\right| \leq C E^{1+\gamma} \ell(L)^{m+ 2+2\beta}\|D \kappa\|_{C^0}\, .
\end{equation}
The gain $E^\gamma$ is however not important for our considerations and we will therefore neglect it in all our subsequent 
estimates. 

If we denote by $K^c$ the complement of the set over which $\bar f$ and $v$ coincide, 
the same considerations above (the suitable modifications are left to the reader) yield also the estimate
\begin{align}
\int_{B_{7 M_0 \ell (L)}} \left|\int D\bar f (w) : D \psi (w-x)\, dw\right|\, dx &\leq C \|\mathbf{1}_{K^c}* D\psi\|_{L^1}
+ C \| |D\bar{f}|^3 * D \psi\|_{L^1}\nonumber\\
&\leq C \left(|K^c| + \int |Dv|^3\right)\|D\psi\|_{L^1}\nonumber\\
&\leq C E \ell (L)^{m+2+2\beta} \|D\psi\|_{L^1}\label{e:fondamentale2}
\end{align}
for every test $\psi\in C^\infty_c (B_{M_0 \ell (L)})$.

The latter are the fundamental estimates from which \eqref{e:L1} and \eqref{e:Delta-Linf} are (respectively) derived. We start with
\eqref{e:Delta-Linf} considering that
\begin{align*}
\|\Delta \bar{z}\|_{C^0 (B_{4 M_0 \ell (L)})}
&= \sup_{\|\psi\|_{L^1} \leq 1} \left|\int \psi \Delta \bar z\right|
=  \sup_{\|\psi\|_{L^1} \leq 1} \left|\int D \bar f : D (\psi \ast \varphi_{\ell (L)})\right|\\
&\stackrel{\eqref{e:fondamentale}}{\leq} C E \ell(L)^{m+ 2+2\beta} \sup_{\|\psi\|_{L^1}\leq 1} \|D (\psi \ast \varphi_{\ell (L)})\|_{C^0}\\
&\leq C E \ell(L)^{m+ 2+2\beta} \|D\varphi_{\ell (L)}\|_{C^0}
\leq C E \ell(L)^{1+2\beta}\, .
\end{align*}
As for the higher derivative estimates, they are obvious consequences of 
\[
D^j \bar{z} = \bar{f} \ast D^j (\varphi_{\ell (L)})\, .
\]
\eqref{e:L1} is more subtle. Write
\[
\bar z (x)- \bar f(x) = \int \varphi_{\ell (L)} (x-y) (\bar f (y) - \bar f (x))\, dy\, .
\]
In order to simplify our notation assume for the moment $x=0$ and compute
\begin{align*}
\bar{z} (0) -\bar f (0) &= \int \varphi_{\ell (L)} (y) \int_0^{|y|}
\frac{\partial \bar f}{\partial r} \left( \tau \frac{y}{|y|}\right)\, d\tau\, dy\\
&= \int \varphi_{\ell (L)} (y) \int_0^{|y|} \nabla \bar f \left(\tau\frac{y}{|y|}\right)
\cdot \frac{y}{|y|}\, d\tau\, dy\notag\\
&= \int \varphi_{\ell (L)} (y) \int_0^1 \nabla \bar f (\sigma y) \cdot y\, d\sigma\, dy\\
&= \int \int_0^1 \varphi_{\ell (L)} \left(\frac{w}{\sigma}\right)\, \nabla \bar f (w) \cdot
\frac{w}{\sigma^{m+1}}\, d\sigma\, dw\notag\\
&= \int \nabla \bar f (w) \cdot \underbrace{w \left(\int_0^1 \varphi_{\ell (L)}
\left(\frac{w}{\sigma}\right)\,\sigma^{-m-1}\, d\sigma\right)}_{=: \Phi (w)}\, dw .
\end{align*}
More generally, 
$\bar{z} (x) -\bar f (x)= \int \nabla \bar f (w) \cdot \Phi (w-x)\, dw$
and
\begin{equation*}
\|\bar z-\bar f\|_{L^1 (F)}=\int_F \left\vert \int \nabla \bar f (w) \cdot \Phi (w-z)\, dw\right\vert\,dz.
\end{equation*}
Since $\varphi$ is radial, the function $\Phi$ is a gradient.
Indeed, it can be easily checked that, for any $\psi$, the vector field $\psi (|w|)\, w$ is curl-free.
Moreover, the support of $\Phi$ is compactly contained in $B_{\ell (L)}$.
Thus we can apply \eqref{e:fondamentale2} to derive
\begin{equation}\label{e:norm1}
\|\bar z - \bar f\|_{L^1(B_{4 M_0 \ell (L)})}\leq C\, E^{1+\gamma} \ell (L)^{m+2+2\beta} \|\Phi\|_{L^1}\, .
\end{equation}
Since
\begin{align*}
\|\Phi\|_{L^1} \leq \int \int_0^1 |w|\,\varphi
\left(\frac{w}{\ell (L) \sigma}\right)\,\ell (L)^{-m}\sigma^{-m-1}\, d\sigma\,dw
=\ell (L) \int_0^1 \int  |y| \,\varphi (y)\,dy\,d\sigma\, ,
\end{align*}
we easily conclude $\|\Phi\|_{L^1} \leq C \ell(L)$, which we can insert in \eqref{e:norm1} to conclude the proof.
\end{proof}

\subsection{Proof of estimate (i) in Proposition \ref{p:key_estimates}} Consider now $L\in \mathscr{C}_k$ as in the statement of the Proposition and let $L = L_k \subset L_{k-1} \subset \ldots \subset L_{N_0}$ be its ``ancestry''.
Fix the plane $\pi = \pi_L$ and fix a natural number $j \in [N_0, k-1]$. Recall that, by the De Giorgi's excess decay, 
$|\pi_L - \pi_{L_{j}}|\leq C E \ell (L_{j})^{2-2\delta}$: arguing as for estimating $\bE (\gr (u), \bC_{16 M_0 \ell (L)} (p_{L_j}, \pi_{L_j}))$,
we easily conclude that 
\[
\ell (L_j)^{-m} \bE (\gr (u), \bC_{16 M_0 \ell (L_j)} (p_{L_j}, \pi_L))\leq C E \ell (L_j)^{2-2\delta_2}\, .
\]
Hence we are in the position of applying the estimate of Proposition \ref{p:key_prop} in the cylinder 
\[
\bC^j = \bC_{8 M_0 \ell (L_j)} (p_{L_j}, \pi)\, .
\] 
If $\bar{f}_j$ are the corresponding Lipscitz approximations and $\bar{z}_j = \bar{f}_j \ast \varphi_{\ell (L_j)}$, we then conclude from Proposition \ref{p:key_prop}
\begin{align*}
\|\Delta \bar{z}_j\|_{C^0} &\leq C E \ell (L_{j})^{1+2\beta}\\
\|\bar{z}_j - \bar{f}_j\|_{L^1} &\leq C E \ell (L_{j})^{m+3+2\beta}\, .
\end{align*}
Consider now two consecutive maps $\bar z_j$ and $\bar z_{j+1}$. The domain of definition of the second map is $B = B_{4 M_0 \ell (L)_{j+1}} (p_{L_{j+1}}, \pi)$ and is contained in the domain of definition 
of the first. Moreover, recalling \eqref{e:alessio_richiama_1}, in such common domain $B$ both $\bar{f}_j$ and $\bar{f}_{j+1}$ coincide with the same function 
(and hence they are equal) except for a set of measure at most $C E^{1+\gamma} \ell (L_j)^{m+2+2\beta}$. In particular, they coincide on a nonempty set and, since they are both Lipschitz, $\|\bar{f}_j - \bar{f}_{j+1}\|_{C^0 (B)} \leq C \ell (L_j)$. Combining the latter two bounds we are able to estimate the $L^1$ norm of $\bar{f}_j - \bar{f}_{j+1}$. We thus conclude the two estimates
\begin{align}
\|\Delta (\bar{z}_j - \bar{z}_{j+1})\|_{C^0 (B)} &\leq C E \ell (L_{j})^{1+2\beta}\label{e:interpol10}\\
\|\bar{z}_j - \bar{z}_{j+1}\|_{L^1 (B)} &\leq C E \ell (L_{j})^{m+3+2\beta}\label{e:interpol11}\, .
\end{align}
We leave to the reader the proof that, classical estimates for the Laplacian and classical interpolation inequalities (cf. for instance \cite{Nir}), imply then
\begin{equation}\label{e:interpol13}
\|D (\bar{z}_j - \bar{z}_{j+1})\|_{C^0 (B')} \leq C E \ell (L_{j})^{2+2\beta}\, ,
\end{equation}
for $B' = B_{\frac{7}{2} M_0 \ell (L_j)}$.
We can next estimate $\|D^i (\bar{z_j} - \bar{z}_{j+1}\|_{C^0}$ for $i>1$ interpolating between \eqref{e:interpol13} and 
\begin{equation}\label{e:interpol12}
\|\Delta D^{i-1} (\bar{z}_j - \bar{z}_{j+1})\|_{C^0 (B)} \leq C E \ell (L_{j})^{2 - i +2\beta}\, ,
\end{equation}
where the latter follows from the estimates of the higher derivatives of the Laplacian given in Proposition \ref{p:key_prop}. We can thus conclude that for $i\in \{0, 1\ldots , 4\}$ and in $B'' =B_{3 M_0 \ell (L_j)}$
\[
\|D^i (\bar{z}_j - \bar{z}_{j+1})\|_{C^0 (B'')} \leq C E \ell (L_j)^{3+2\beta-i} \leq C E 2^{-(3+2\beta -i)j}\, .
\]
By interpolation we achieve then
\[
[D^3(\bar{z}_j - \bar{z}_{j+1})]_{\beta, B''} \leq C E \ell(L_j)^\beta\, .
\]
Summing the corresponding geometric series and recalling that $z_L = \bar z_k$ we easily achieve 
\begin{align*}
\|D^i (z_L - \bar{z}_{N_0})\|_{C^0 (B_{3 M_0 \ell (L)} (p_L, \pi_L))} &\leq C E \qquad \mbox{for $i\in \{0,1,2,3\}$,}\\
[D^3 (z_L - \bar{z}_{N_0})]_{\beta, B_{3 M_0 \ell (L)} (p_L, \pi_L)} &\leq C E\, ,
\end{align*}
and
\[
\|D^4 (z_L - \bar{z}_{N_0})\|_{C^0 (B_{3 m_0 \ell (L)} (p_L, \pi_L))} \leq C E \ell (L)^{2\beta -1} \leq C E 2^{(1-2\beta) k}\, .
\]
Now, since $\bar{z}_{{N_0}}$ is the convolution at a scale comparable to $1$ of a Lipschitz function $\bar{f}_{N_0}$ with
$\|D \bar{f}_{N_0}\|_{L^2} \leq C E^{\frac{1}{2}}$, we clearly have
\[
\|D \bar{z}_{N_0}\|_{C^s} \leq C(s) E^{\frac{1}{2}} \qquad \mbox{for every $s\in \mathbb N$.}
\]
We therefore easily conclude\footnote{Indeed the proof gives the better exponent $1-2\beta$ in the bound for the $C^4$ norm.} 
\begin{align*}
\|D z_L\|_{C^{2, \beta} (B_{3M_0 \ell (L)} (p_L, \pi))} &\leq C E^{\frac{1}{2}}\\
\|D^4 z_L\|_{C^0 (B_{3M_0 \ell (L)} (p_L, \pi))} &\leq C E^{\frac{1}{2}} 2^{(1-\beta) k}\, . 
\end{align*}
Using now Lemma \ref{l:rotation_complex} we achieve estimate (i) in Proposition \ref{p:key_estimates}. 

\subsection{Proof of the estimates (ii) and (iii) of Proposition \ref{p:key_estimates}} First of all we observe that in
order to show (ii) in full generality, it suffices to show it when $K$ is the father of $L$ and then sum the corresponding estimates over the relevant ancestry of $L$ in the general case. As such, the estimates (ii) and (iii) are then very similar
and they can in fact be proved using the same idea. The key is again a suitable $L^1$ estimate.

\begin{mylem}\label{l:interaction}
Assume $u$ and $\delta$ satisfy the assumptions of Proposition \ref{p:key_prop} and consider a pair of cubes $(K,L)$ which consists of either father and son or of two neighboring cubes of the same cubical partition $\mathscr{C}_k$. If $F$ is the intersection of the domain of definitions of $g_L$ and $g_K$, then
\begin{equation}\label{e:interaction}
\|g_L - g_K\|_{L^1 (F)}\leq C E \ell (L)^{m+3+\beta}\, .
\end{equation}
\end{mylem}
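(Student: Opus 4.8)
The plan is to bound the $L^1$ distance of each of $g_L$ and $g_K$ from $u$ itself on the common domain $F$, and then conclude by the triangle inequality. In fact I would prove the slightly stronger bound $\|g_L-u\|_{L^1(F)}\le CE\,\ell(L)^{m+3+2\beta}$ (and the analogous one for $g_K$): since $\ell(K)\le 2\ell(L)\le 2$, the two bounds combine to give \eqref{e:interaction} after trading the exponent $2\beta$ for $\beta$ at the price of a larger geometric constant. By symmetry it is enough to treat $g_L$.

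I would work in the tilted $\pi_L$-coordinates, in which $\gr(u)$ is the graph of $v_L$, $f_L$ is its $E(L)^\gamma$-Lipschitz approximation and $z_L=f_L\ast\varphi_{\ell(L)}$. The point that makes the change of coordinates available is that $f_L$ and $z_L$ --- unlike $v_L$, whose Lipschitz constant is close to $1$ --- have Lipschitz constant $\le CE(L)^\gamma$, hence below the threshold $c_0$ of Lemma \ref{l:rotation_complex} once $E$ is small; moreover, using $p_L\in\gr(u)$ (so that $v_L$ takes at the centre of the disk the vertical coordinate of $p_L$), together with $\|f_L-v_L\|_{C^0}\le C\ell(L)$ and the mollification bound $\|z_L-f_L\|_{C^0}\le C\ell(L)$, both $f_L$ and $z_L$ take at the centre a value within $C\ell(L)\le c_0 M_0\ell(L)$ of that vertical coordinate once $M_0$ is large. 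Applying Lemma \ref{l:rotation_complex} to the pair $(z_L,f_L)$ with the rotation carrying $\pi_L$ to $\pi_0$, in the horizontal coordinates their graphs become the graphs of $g_L$ and of some Lipschitz $f_L'$ defined on a disk containing the domain of $g_L$, and part (b) of that lemma together with \eqref{e:L1} of Proposition \ref{p:key_prop} (applied with $\pi=\pi_L$, which is admissible by \eqref{e:ex_decay_100} and \eqref{e:quasi_orizzontale}) gives
\[
\|g_L-f_L'\|_{L^1}\le C\,\|z_L-f_L\|_{L^1}\le CE\,\ell(L)^{m+3+2\beta}\, .
\]

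It then remains to see that $f_L'$ is $L^1$-close to $u$, and here I would use that $\gr(f_L)$ and $\gr(v_L)$ coincide outside a set of $\pi_L$-measure at most $CE^{1+\gamma}\ell(L)^{m+(2-2\delta)(1+\gamma)}=CE^{1+\gamma}\ell(L)^{m+2+2\beta}$ (by \eqref{e:alessio_richiama_1} and \eqref{e:define_beta}), while $\gr(v_L)$ is precisely the portion of $\gr(u)$ inside the tilted cylinder. Rotating back and comparing the horizontal and tilted projections --- a uniformly bi-Lipschitz map, because $|A-{\rm Id}|\le CE^{\frac12}$ and all Lipschitz constants involved are bounded --- one obtains that $\{f_L'\neq u\}$ is contained in a set of horizontal measure at most $CE^{1+\gamma}\ell(L)^{m+2+2\beta}$, on which both functions stay within $C\ell(L)$ of the vertical coordinate of $p_L$; hence $\|f_L'-u\|_{L^1}\le C\ell(L)\cdot E^{1+\gamma}\ell(L)^{m+2+2\beta}\le CE\,\ell(L)^{m+3+2\beta}$. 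Combining with the previous display gives $\|g_L-u\|_{L^1}\le CE\,\ell(L)^{m+3+2\beta}$ on the domain of $g_L$; the identical computation performed in the $\pi_K$-coordinates, with $\ell(K)$ in place of $\ell(L)$, bounds $\|g_K-u\|_{L^1}$ the same way, and restricting to $F$ and applying the triangle inequality finishes the proof. The only delicate point is the change of coordinates in the middle step --- in particular the device of transferring all the information through the small-Lipschitz functions $f_L,z_L$ rather than through $v_L$ directly --- while everything else is the by now familiar bookkeeping of the fact that $z_L$, $f_L$, $v_L$ and $u$ are pairwise within small $L^1$ errors.
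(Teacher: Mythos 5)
Your argument is correct, and it reorganizes the proof in a genuinely different, more symmetric way than the paper does. The paper treats the two cubes asymmetrically: it transfers $z_K$ and $f_K$ into the $\pi_L$-coordinates via Lemma \ref{l:rotation_complex}, obtaining auxiliary functions $\hat f,\hat z$, runs the chain $\hat z\approx\hat f\approx f_L\approx z_L$ entirely in that tilted system (the middle step using that $\hat f$ and $f_L$ both coincide with the graph of $u$ off a small set), and only at the end rotates the comparison down to $\pi_0$; it never compares anything to $u$ in horizontal coordinates. You instead take $u$ itself as the common reference and prove $\|g_L-u\|_{L^1}\leq C E\ell(L)^{m+3+2\beta}$ on the domain of $g_L$ (and likewise for $K$), concluding by the triangle inequality on $F$. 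The ingredients are identical --- \eqref{e:L1} of Proposition \ref{p:key_prop} for the $z\approx f$ step, the exceptional-set bounds \eqref{e:alessio_richiama_1}--\eqref{e:alessio_richiama_2} together with the Lipschitz bounds for the $f\approx v\approx u$ step, and Lemma \ref{l:rotation_complex}(b) to move $L^1$ estimates between coordinate systems --- and you correctly isolate the one delicate point, namely that part (b) of the rotation lemma applies only to pairs of small-Lipschitz functions, so the rotation must act on $(z_L,f_L)$ while the comparison with $u$ is handled through the (coordinate-free) measure of the symmetric difference of the graphs. Two minor remarks: the centering hypothesis of Lemma \ref{l:rotation_complex} should be verified using the smallness of $E$ (since the exceptional set has measure $o(\ell(L)^m)$, the functions $f_L$ and $z_L$ agree with $v_L$, whose graph contains $p_L$, at a point whose distance from the center is $o(\ell(L))$), rather than by ``taking $M_0$ large'', as $M_0$ is a fixed geometric constant; and your final exponent $m+3+2\beta$ is in fact stronger than the stated $m+3+\beta$, a relaxation the paper's own proof also performs tacitly.
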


The lemma will be proved below and we now show how to derive the estimates (ii) and (iii) from it. First observe that the case of fourth derivatives in \eqref{e:inter} is an obvious consequence of the estimate \eqref{e:est1}. As for the the other derivatives, observe that we know from Part (i) of Proposition \ref{p:key_prop} that
$\|g_L-g_K\|_{C^{3, \beta} (F)} \leq C E^{\frac{1}{2}}$. Combining the latter bound with \eqref{e:interaction} we then conclude the proof applying the following lemma.

\begin{mylem}\label{l:interpolation_bis}
For every $m$, $0<r<s$ and $\kappa>0$ there is a positive constant $C$ (depending on $m$, $\kappa$ and $\frac{s}{r}$)
with the following property. 
Let $f$ be a $C^{3, \kappa}$ function in the disk $B_s\subset \mathbb R^m$, taking values in $\mathbb R^n$. Then
\begin{equation}\label{e:interpolation_bis}
\|D^j f\|_{C^0 (B_r)} \leq C r^{-m-j} \|f\|_{L^1 (B_s)} + C r^{3+\kappa-j} [D^3 f]_{\kappa, B_s}\qquad \forall j\in \{0,1,2,3\}\, ,
\end{equation}
where $C$ is a constant depending only on $m$, $n$ and $\kappa$. 
\end{mylem}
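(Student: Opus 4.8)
The statement is a classical interior interpolation inequality of Gagliardo--Nirenberg type, and the plan is to prove it by the standard route: Taylor polynomials, a finite-dimensional comparison of norms, and a scaling.

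First I would remove the parameter $r$ by scaling. Replacing $f$ with $f_r(\cdot):=f(r\,\cdot)$ one has $\|D^jf_r\|_{C^0(B_1)}=r^j\|D^jf\|_{C^0(B_r)}$, $\|f_r\|_{L^1(B_{s/r})}=r^{-m}\|f\|_{L^1(B_s)}$ and $[D^3f_r]_{\kappa,B_{s/r}}=r^{3+\kappa}[D^3f]_{\kappa,B_s}$, so \eqref{e:interpolation_bis} is equivalent to its special case $r=1$, which, writing $\sigma:=s/r>1$, reads $\|D^jf\|_{C^0(B_1)}\le C\big(\|f\|_{L^1(B_\sigma)}+[D^3f]_{\kappa,B_\sigma}\big)$ with $C=C(m,\kappa,\sigma)$.

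To prove this, fix $x_0\in B_1$ and set $\rho:=\sigma-1$, so that $B_\rho(x_0)\subset B_\sigma$. Let $P_{x_0}$ be the Taylor polynomial of $f$ at $x_0$ of degree $3$. Writing the remainder in integral form and using the H\"older continuity of $D^3f$, one gets $|f(x)-P_{x_0}(x)|\le C_m|x-x_0|^{3+\kappa}[D^3f]_{\kappa,B_\sigma}$ for $x\in B_\rho(x_0)$, hence $\|f-P_{x_0}\|_{L^1(B_\rho(x_0))}\le C_m\,\rho^{m+3+\kappa}[D^3f]_{\kappa,B_\sigma}$ and therefore $\|P_{x_0}\|_{L^1(B_\rho(x_0))}\le\|f\|_{L^1(B_\sigma)}+C_m\,\rho^{m+3+\kappa}[D^3f]_{\kappa,B_\sigma}$. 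The one substantive step is then to estimate $D^jf(x_0)=D^jP_{x_0}(x_0)$ by $\|P_{x_0}\|_{L^1(B_\rho(x_0))}$: on the finite-dimensional space of polynomials of degree $\le3$ in $m$ variables, $Q\mapsto\|Q\|_{L^1(B_1(0))}$ is a norm and $Q\mapsto|D^jQ(0)|$ a seminorm, so $|D^jQ(0)|\le C(m)\|Q\|_{L^1(B_1(0))}$; applying this to $Q(y):=P_{x_0}(x_0+\rho y)$ and rescaling gives $|D^jf(x_0)|\le C(m)\,\rho^{-m-j}\|P_{x_0}\|_{L^1(B_\rho(x_0))}$.

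Combining the last two estimates and taking the supremum over $x_0\in B_1$ yields $\|D^jf\|_{C^0(B_1)}\le C(m)\rho^{-m-j}\|f\|_{L^1(B_\sigma)}+C(m)\rho^{3+\kappa-j}[D^3f]_{\kappa,B_\sigma}$; since $\rho=\sigma-1$ depends only on $\sigma=s/r$ this is the claimed inequality, and undoing the scaling of the first step finishes the proof. I do not expect any genuine obstacle here; the only thing requiring minimal care is bookkeeping the powers of $\rho$ (equivalently of $s/r$) through the two scalings, and the single ``idea'' in the argument is the finite-dimensional norm comparison, which converts an estimate on derivatives of a polynomial into an estimate on its $L^1$ average.
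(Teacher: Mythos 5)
Your proof is correct and follows essentially the same route as the paper's: reduce by scaling, compare $f$ with its third-order Taylor polynomial via the H\"older remainder estimate, and use the equivalence of the $L^1$ norm with the coefficient (or derivative-at-a-point) norm on the finite-dimensional space of polynomials of degree at most $3$. The only cosmetic difference is that the paper first reduces to $s=2r$ by a covering argument and translates the evaluation point to the origin, whereas you keep $\sigma=s/r$ general and rescale the polynomial around $x_0$ directly; the substance is identical.
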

\begin{proof} A simple covering argument reduces the lemma to the case $s=2r$. Moreover,
define $f_r (x):= f (rx)$ to see that we can assume $r=1$ and, arguing componentwise, we can assume $n=1$. 
So our goal is to show
\begin{equation}\label{e:ridotta}
\sum_{j=0}^3 |D^j f (y)| \leq C \|f\|_{L^1} + C [D^3 f]_\kappa\, \qquad \forall y \in B_1, \forall
f\in C^{3,\kappa} (B_2, \mathbb R)\, ,
\end{equation}
By translating it suffices then to prove the estimate
\begin{equation}\label{e:ridotta_2}
\sum_{j=0}^3 |D^j f (0)| \leq C \|f\|_{L^1 (B_1)} + C [D^3 f]_{\kappa, B_1} \qquad \forall f\in C^{3,\kappa} (B_1)\, .
\end{equation}
Consider now the space of polynomials $R$ in $m$ variables of degree at most 3, which we write as
$R (x) = \sum_{0 \leq |j|\leq 3} A_j x^j$, where we use the convention that:
\begin{itemize}
\item $j= (j_1, \ldots, j_m)$ denotes a multiindex;
\item $|j| = j_1+ \ldots + j_m$;
\item $x^j = x_1^{j_1} x_2^{j_2} \ldots x_m^{j_m}$.
\end{itemize} 
This is a finite dimensional vector space, on which we can define the norms
$|R| := \sum_{0\leq |j|\leq 3} |A_j|$ and $\|R\| := \int_{B_1} |R(x)|\, dx$.
These two norms must then be equivalent, so there is a constant $C$ (depending only on $m$), such 
that $|R|\leq C\|R\|$ for any such polynomial. In particular,
if $P$ is the Taylor polynomial of third order for $f$ at the point $0$, we conclude
\begin{align*}
\sum_{j=0}^3 |D^j f (0)| &\leq C |P| \leq C \|P\| = C \int_{B_1} |P (x)|\, dx
\leq C\|f\|_{L^1 (B_1)} + C\|f-P\|_{L^1 (B_1)}\\
&\leq C \|f\|_{L^1} + C [D^3 f]_\kappa\, .\qquad\qquad \qedhere
\end{align*}
\end{proof}

In order to complete our task we are only left with proving Lemma \ref{l:interaction}.

\begin{proof}[Proof of Lemma \ref{l:interaction}]
Since the two cases are analogous, we consider the one in which $K$ is the father of $L$. We let $z_L$ and $z_K$ be the corresponding tilted interpolating functions, which come from the convolutions of the functions $f_L$ and $f_K$. Now, the graph of $z_K$ is contained in the cylinder $\bC_{4 M_0 \ell (K)} (p_K, \pi_K)$. We can however apply Lemma \ref{l:rotation_complex} and find functions $\hat{f}$ and $\hat{z}$ defined on $B_{3 M_0 \ell (K)} (p_K, \pi_L) \to \pi_L^\perp$ with the properties that 
\begin{align*}
\gr (\hat{z}) &= \gr (z_K) \cap \bC_{3 M_0 \ell (K)} (p_K, \pi_L)\\
\gr (\hat{f}) &= \gr (f_K) \cap \bC_{3 M_0 \ell (K)} (p_K, \pi_L)\, . 
\end{align*}
Now, by Lemma \ref{l:rotation_complex} we have
\[
\|\hat{z} - \hat{f}\|_{L^1 (B_{3 M_0 \ell (K)} (p_K, \pi_L))}
\leq \|z_K - f_K\|_{L^1 (B_{4M_0 \ell (K)} (p_K, \pi_K))} \leq C E \ell (K)^{m+3+\beta}\, ,
\]
where we have used Proposition \ref{p:key_prop} in the last inequality.

Consider now that $B = B_{4M_0 \ell (L)} (p_L, \pi_L)$, which is the domain of $z_L$, is contained in $B_{3 M_0 \ell (K)} (p_K, \pi_L)$. 
Moreover, both $\gr (\hat{f})$ and $\gr (f_L)$ coincide with $\gr (u)$ except for a set of $m$-dimensional volume bounded by $C \ell (K)^{m+2+\beta}$, cf.  \eqref{e:alessio_richiama_1}. In particular, 
\[
|\{\hat{f} \neq f_L\}\cap B| \leq C E \ell (K)^{m+2+\beta}\, ,
\] 
and thus the two functions agree on a nonempty set. Since they are both Lipschitz with bounded Lipschitz constant, 
\[
\|\hat{f}-f_L\|_{L^1 (B)}\leq C E \ell (K)^{m+3+\beta}\, .
\]
We can thus use again Proposition \ref{p:key_prop} to infer
\[
\|\hat{z} - z_L\|_{L^1 (B)} \leq C E \ell (K)^{m+3+\beta}\, .
\]
Now, consider that 
\begin{align*}
\gr (g_L) & = \gr (z_L) \cap \bC_{2 M_0 \ell (L)} (x_L, \pi_0)\\
 \gr (g_K) \cap \bC_{2 M_0 \ell (L)} (x_L, \pi_0) &= \gr (\hat{z}) \cap \bC_{2 M_0 \ell (L)} (x_L, \pi_0)\, .
 \end{align*} 
 Thus we can use Lemma \ref{l:rotation_complex} one last time to derive
\[
\|g_K-g_L\|_{L^1 (B_{2M_0 \ell (L)} (x_L, \pi_0))} \leq C \|\hat{z} - z_L\|_{L^1 (B)} \leq C E \ell (K)^{m+3+\beta}
\]
and conclude the proof of the lemma.
\end{proof}

%
%
%
%
%
%
%

\bibspread

\begin{bibdiv}
\begin{biblist}

\bib{Almgren68}{article}{
      author={Almgren, F.~J., Jr.},
       title={Existence and regularity almost everywhere of solutions to
  elliptic variational problems among surfaces of varying topological type and
  singularity structure},
        date={1968},
        ISSN={0003-486X},
     journal={Ann. of Math. (2)},
      volume={87},
       pages={321\ndash 391},
      review={\MR{0225243 (37 \#837)}},
}

\bib{Alm}{book}{
      author={Almgren, Jr. F.~J.},
       title={{Almgren's big regularity paper}},
      series={{World Scientific Monograph Series in Mathematics}},
   publisher={World Scientific Publishing Co. Inc.},
     address={River Edge, NJ},
        date={2000},
      volume={1},
        ISBN={981-02-4108-9},
      review={\MR{MR1777737 (2003d:49001)}},
}

\bib{De_Giorgi}{book}{
      author={De~Giorgi, Ennio},
       title={Frontiere orientate di misura minima},
      series={Seminario di Matematica della Scuola Normale Superiore di Pisa,
  1960-61},
   publisher={Editrice Tecnico Scientifica},
     address={Pisa},
        date={1961},
      review={\MR{MR0179651 (31 \#3897)}},
}

\bib{DL_ICM}{inproceedings}{
      author={De~Lellis, C.},
       title={Almgren's {$Q$}-valued functions revisited},
        date={2010},
   booktitle={Proceedings of the {I}nternational {C}ongress of
  {M}athematicians. {V}olume {III}},
   publisher={Hindustan Book Agency, New Delhi},
       pages={1910\ndash 1933},
      review={\MR{2827872}},
}

\bib{DL_CDM}{incollection}{
      author={De~Lellis, C.},
       title={The regularity of minimal surfaces in higher codimension},
        date={2016},
   booktitle={Current developments in mathematics 2014},
   publisher={Int. Press, Somerville, MA},
       pages={153\ndash 229},
      review={\MR{3468252}},
}

\bib{DL_Survey}{incollection}{
      author={De~Lellis, C.},
       title={The size of the singular set of area-minimizing currents},
        date={2016},
   booktitle={Surveys in differential geometry 2016. {A}dvances in geometry and
  mathematical physics},
      series={Surv. Differ. Geom.},
      volume={21},
   publisher={Int. Press, Somerville, MA},
       pages={1\ndash 83},
      review={\MR{3525093}},
}

\bib{DL_BUMI}{article}{
      author={De~Lellis, C.},
       title={Two-dimensional almost area minimizing currents},
        date={2016},
        ISSN={1972-6724},
     journal={Boll. Unione Mat. Ital.},
      volume={9},
      number={1},
       pages={3\ndash 67},
         url={https://doi.org/10.1007/s40574-016-0057-1},
      review={\MR{3470822}},
}

\bib{DDHM}{article}{
      author={{De Lellis}, C.},
      author={{De Philippis}, G.},
      author={{Hirsch}, J.},
      author={{Massaccesi}, A.},
       title={{Boundary regularity of mass-minimizing integral currents and a
  question of Almgren}},
        date={2018-02},
     journal={ArXiv e-prints. To appear in {\em Matrix Annals}},
      eprint={1802.07496},
}

\bib{DFS}{article}{
      author={{De Lellis}, C.},
      author={Focardi, M.},
      author={Spadaro, E.},
       title={{Lower semicontinuous functionals for {A}lmgren's multiple valued
  functions}},
        date={2011},
        ISSN={1239-629X},
     journal={Ann. Acad. Sci. Fenn. Math.},
      volume={36},
      number={2},
       pages={393\ndash 410},
         url={http://dx.doi.org/10.5186/aasfm.2011.3626},
      review={\MR{2757522 (2012h:49019)}},
}

\bib{DMSV}{article}{
      author={{De Lellis}, C.},
      author={{Marchese}, A.},
      author={{Spadaro}, E.},
      author={{Valtorta}, D.},
       title={{Rectifiability and upper Minkowski bounds for singularities of
  harmonic Q-valued maps}},
        date={2016-12},
     journal={ArXiv e-prints. To appear in {\em Comm. Math. Helv.}},
      eprint={1612.01813},
}

\bib{DS-cm}{article}{
      author={{De Lellis}, C.},
      author={Spadaro, E.},
       title={{Center manifold: a case study}},
        date={2011},
        ISSN={1078-0947},
     journal={Discrete Contin. Dyn. Syst.},
      volume={31},
      number={4},
       pages={1249\ndash 1272},
         url={http://dx.doi.org/10.3934/dcds.2011.31.1249},
      review={\MR{2836351 (2012j:49110)}},
}

\bib{DS1}{article}{
      author={{De Lellis}, C.},
      author={Spadaro, E.},
       title={{{$Q$}-valued functions revisited}},
        date={2011},
        ISSN={0065-9266},
     journal={Mem. Amer. Math. Soc.},
      volume={211},
      number={991},
       pages={vi+79},
         url={http://dx.doi.org/10.1090/S0065-9266-10-00607-1},
      review={\MR{2663735}},
}

\bib{DS3}{article}{
      author={{De Lellis}, C.},
      author={Spadaro, E.},
       title={{Regularity of area minimizing currents {I}: gradient {$L^p$}
  estimates}},
        date={2014},
        ISSN={1016-443X},
     journal={Geom. Funct. Anal.},
      volume={24},
      number={6},
       pages={1831\ndash 1884},
         url={https://doi.org/10.1007/s00039-014-0306-3},
      review={\MR{3283929}},
}

\bib{DS2}{article}{
      author={{De Lellis}, C.},
      author={Spadaro, E.},
       title={{Multiple valued functions and integral currents}},
        date={2015},
        ISSN={0391-173X},
     journal={Ann. Sc. Norm. Super. Pisa Cl. Sci. (5)},
      volume={14},
      number={4},
       pages={1239\ndash 1269},
      review={\MR{3467655}},
}

\bib{DS4}{article}{
      author={{De Lellis}, C.},
      author={Spadaro, E.},
       title={{Regularity of area minimizing currents {II}: center manifold}},
        date={2016},
        ISSN={0003-486X},
     journal={Ann. of Math. (2)},
      volume={183},
      number={2},
       pages={499\ndash 575},
         url={https://doi.org/10.4007/annals.2016.183.2.2},
      review={\MR{3450482}},
}

\bib{DS5}{article}{
      author={{De Lellis}, C.},
      author={Spadaro, E.},
       title={{Regularity of area minimizing currents {III}: blow-up}},
        date={2016},
        ISSN={0003-486X},
     journal={Ann. of Math. (2)},
      volume={183},
      number={2},
       pages={577\ndash 617},
         url={https://doi.org/10.4007/annals.2016.183.2.3},
      review={\MR{3450483}},
}

\bib{DSS4}{article}{
      author={{De Lellis}, C.},
      author={{Spadaro}, E.},
      author={{Spolaor}, L.},
       title={{Regularity theory for $2$-dimensional almost minimal currents
  III: blowup}},
        date={2015-08},
     journal={ArXiv e-prints. To appear in {Jour. of Diff. Geom}},
      eprint={1508.05510},
}

\bib{DSS3}{article}{
      author={{De Lellis}, C.},
      author={Spadaro, E.},
      author={Spolaor, L.},
       title={{Regularity {T}heory for 2-{D}imensional {A}lmost {M}inimal
  {C}urrents {II}: {B}ranched {C}enter {M}anifold}},
        date={2017},
        ISSN={2199-2576},
     journal={Ann. PDE},
      volume={3},
      number={2},
       pages={3:18},
         url={https://doi.org/10.1007/s40818-017-0035-7},
      review={\MR{3712561}},
}

\bib{DSS1}{article}{
      author={{De Lellis}, C.},
      author={Spadaro, E.},
      author={Spolaor, L.},
       title={{Uniqueness of tangent cones for two-dimensional
  almost-minimizing currents}},
        date={2017},
        ISSN={0010-3640},
     journal={Comm. Pure Appl. Math.},
      volume={70},
      number={7},
       pages={1402\ndash 1421},
         url={https://doi.org/10.1002/cpa.21690},
      review={\MR{3666570}},
}

\bib{DSS2}{article}{
      author={De~Lellis, C.},
      author={Spadaro, E.},
      author={Spolaor, L.},
       title={Regularity theory for {$2$}-dimensional almost minimal currents
  {I}: {L}ipschitz approximation},
        date={2018},
        ISSN={0002-9947},
     journal={Trans. Amer. Math. Soc.},
      volume={370},
      number={3},
       pages={1783\ndash 1801},
         url={https://doi.org/10.1090/tran/6995},
      review={\MR{3739191}},
}

\bib{EG}{book}{
      author={Evans, L.~C.},
      author={Gariepy, R.~F.},
       title={{Measure theory and fine properties of functions}},
      series={{Studies in Advanced Mathematics}},
   publisher={CRC Press},
     address={Boca Raton, FL},
        date={1992},
        ISBN={0-8493-7157-0},
      review={\MR{MR1158660 (93f:28001)}},
}

\bib{Federer}{book}{
      author={Federer, H.},
       title={{Geometric measure theory}},
      series={{Die Grundlehren der mathematischen Wissenschaften, Band 153}},
   publisher={Springer-Verlag New York Inc., New York},
        date={1969},
      review={\MR{MR0257325 (41 \#1976)}},
}

\bib{FMS}{article}{
      author={Focardi, M.},
      author={Marchese, A.},
      author={Spadaro, E.},
       title={Improved estimate of the singular set of {D}ir-minimizing
  {$Q$}-valued functions via an abstract regularity result},
        date={2015},
        ISSN={0022-1236},
     journal={J. Funct. Anal.},
      volume={268},
      number={11},
       pages={3290\ndash 3325},
         url={https://doi.org/10.1016/j.jfa.2015.02.011},
      review={\MR{3336726}},
}

\bib{GS}{article}{
      author={Ghiraldin, Francesco},
      author={Spolaor, Luca},
       title={On the number of singular points for planar multivalued harmonic
  functions},
        date={2017},
        ISSN={0025-2611},
     journal={Manuscripta Math.},
      volume={154},
      number={3-4},
       pages={513\ndash 525},
         url={https://doi.org/10.1007/s00229-017-0920-0},
      review={\MR{3713924}},
}

\bib{Hirsch3}{article}{
      author={{Hirsch}, J.},
       title={{Partial H$\backslash$``older continuity for Q-valued energy
  minimizing maps}},
        date={2014-02},
     journal={ArXiv e-prints},
      eprint={1402.2651},
}

\bib{Hirsch1}{article}{
      author={Hirsch, J.},
       title={Boundary regularity of {D}irichlet minimizing {$Q$}-valued
  functions},
        date={2016},
        ISSN={0391-173X},
     journal={Ann. Sc. Norm. Super. Pisa Cl. Sci. (5)},
      volume={16},
      number={4},
       pages={1353\ndash 1407},
      review={\MR{3616337}},
}

\bib{Hirsch2}{article}{
      author={Hirsch, J.},
       title={Examples of holomorphic functions vanishing to infinite order at
  the boundary},
        date={2018},
        ISSN={0002-9947},
     journal={Trans. Amer. Math. Soc.},
      volume={370},
      number={6},
       pages={4249\ndash 4271},
         url={https://doi.org/10.1090/tran/7192},
      review={\MR{3811527}},
}

\bib{HSV}{article}{
      author={{Hirsch}, J.},
      author={{Stuvard}, S.},
      author={{Valtorta}, D.},
       title={{Rectifiability of the singular set of multiple valued energy
  minimizing harmonic maps}},
        date={2017-08},
     journal={ArXiv e-prints},
      eprint={1708.02116},
}

\bib{KW1}{article}{
      author={{Krummel}, B.},
      author={{Wickramasekera}, N.},
       title={{Fine properties of branch point singularities: Two-valued
  harmonic functions}},
        date={2013-11},
     journal={ArXiv e-prints},
      eprint={1311.0923},
}

\bib{KW2}{article}{
      author={{Krummel}, B.},
      author={{Wickramasekera}, N.},
       title={{Fine properties of branch point singularities: Dirichlet energy
  minimizing multi-valued functions}},
        date={2017-11},
     journal={ArXiv e-prints},
      eprint={1711.06222},
}

\bib{Lawson-Osserman}{article}{
      author={Lawson, H.~B., Jr.},
      author={Osserman, R.},
       title={Non-existence, non-uniqueness and irregularity of solutions to
  the minimal surface system},
        date={1977},
        ISSN={0001-5962},
     journal={Acta Math.},
      volume={139},
      number={1-2},
       pages={1\ndash 17},
         url={https://doi.org/10.1007/BF02392232},
      review={\MR{452745}},
}

\bib{LM}{article}{
      author={Logaritsch, P.},
      author={Marchese, A.},
       title={Kirszbraun's extension theorem fails for {A}lmgren's multiple
  valued functions},
        date={2014},
        ISSN={0003-889X},
     journal={Arch. Math. (Basel)},
      volume={102},
      number={5},
       pages={455\ndash 458},
         url={https://doi.org/10.1007/s00013-014-0642-4},
      review={\MR{3254787}},
}

\bib{Nir}{article}{
      author={Nirenberg, L.},
       title={{An extended interpolation inequality}},
        date={1966},
     journal={Ann. Scuola Norm. Sup. Pisa (3)},
      volume={20},
       pages={733\ndash 737},
      review={\MR{0208360 (34 \#8170)}},
}

\bib{Simon}{book}{
      author={Simon, L.},
       title={{Lectures on geometric measure theory}},
      series={{Proceedings of the Centre for Mathematical Analysis, Australian
  National University}},
   publisher={Australian National University Centre for Mathematical Analysis},
     address={Canberra},
        date={1983},
      volume={3},
        ISBN={0-86784-429-9},
      review={\MR{MR756417 (87a:49001)}},
}

\bib{Spadaro}{article}{
      author={Spadaro, E.},
       title={{Complex varieties and higher integrability of {D}ir-minimizing
  {$Q$}-valued functions}},
        date={2010},
        ISSN={0025-2611},
     journal={Manuscripta Math.},
      volume={132},
      number={3-4},
       pages={415\ndash 429},
         url={http://dx.doi.org/10.1007/s00229-010-0353-5},
      review={\MR{2652440 (2011e:49002)}},
}

\bib{Spolaor}{article}{
      author={{Spolaor}, L.},
       title={{Almgren's type regularity for {S}emicalibrated {C}urrents}},
        date={2015-11},
     journal={ArXiv e-prints},
      eprint={1511.07705},
}

\bib{Stein}{book}{
      author={Stein, E.~M.},
       title={Singular integrals and differentiability properties of
  functions},
      series={Princeton Mathematical Series, No. 30},
   publisher={Princeton University Press, Princeton, N.J.},
        date={1970},
      review={\MR{0290095}},
}

\bib{SW}{book}{
      author={Stein, E.~M.},
      author={Weiss, G.},
       title={{Introduction to {F}ourier analysis on {E}uclidean spaces}},
   publisher={Princeton University Press, Princeton, N.J.},
        date={1971},
        note={Princeton Mathematical Series, No. 32},
      review={\MR{0304972 (46 \#4102)}},
}

\bib{Stuvard1}{article}{
      author={{Stuvard}, S.},
       title={{Multiple valued Jacobi fields}},
        date={2017-01},
     journal={ArXiv e-prints},
      eprint={1701.08753},
}

\bib{Stuvard2}{article}{
      author={{Stuvard}, S.},
       title={{Multiple valued sections of vector bundles: the
  reparametrization theorem for $Q$-valued functions revisited}},
        date={2017-04},
     journal={ArXiv e-prints},
      eprint={1705.00054},
}

\end{biblist}
\end{bibdiv}

\end{document}